\newtheorem{theorem}{Theorem}[section]
\newtheorem{lemma}[theorem]{Lemma}
\theoremstyle{definition}
\newtheorem{definition}[theorem]{Definition}
\newtheorem{corollary}[theorem]{Corollary}
\theoremstyle{remark}
\newtheorem{remark}[theorem]{Remark}
\numberwithin{equation}{section}
\newcommand{\MMM} {\mathcal M}
\begin{document}

\title{More on the rings $B_1(X)$ and $B_1^*(X)$}
\author{Atanu Mondal}
\address{Department of Commerce (E), St. Xavier's college, Mother Teresa sarani,
	Kolkata - 700016, INDIA} 
\email{atanu@sxccal.edu}
\author{A. Deb Ray}

\address{Department of Pure Mathematics, University of Calcutta, 35, Ballygunge Circular Road, Kolkata - 700019, INDIA}
\email{debrayatasi@gmail.com}

\begin{abstract}
This paper focuses mainly on the ring of all bounded Baire one functions on a topological space. The uniform norm topology arises from the $\sup$-norm defined on the collection $B_1^*(X)$ of all bounded Baire one functions. With respect to this topology, $B_1^*(X)$ is a topological ring. It is proved that under uniform norm topology, the set of all units forms an open set and as a consequence of it, every maximal ideal of $B_1^*(X)$ is closed in $B_1^*(X)$ with uniform norm topology. Since the natural extension of uniform norm topology on $B_1(X)$, when $B_1^*(X) \neq B_1(X)$, does not show up these features, a topology called $m_B$-topology is defined on $B_1(X)$ suitably to achieve these results on $B_1(X)$. It is proved that the relative $m_B$ topology coincides with the uniform norm topology on $B_1^*(X)$ if and only if $B_1(X) = B_1^*(X)$. Moreover, $B_1(X)$ with $m_B$-topology is 1st countable if and only if $B_1(X) = B_1^*(X)$. \\
The last part of the paper establishes a correspondence between the ideals of $B_1^*(X)$ and a special class of $Z_B$-filters, called $e_B$-filters on a normal topological space $X$. It is also observed that for normal spaces, the cardinality of the collection of all maximal ideals of $B_1(X)$ and those of $B_1^*(X)$ are the same.
\end{abstract}
\keywords{Uniform norm topology, $u_B$-topology, $m_B$-topology, $Z_B$-filter, $Z_B$-ultrafilter, $e_B$-ideal, $e_B$-filter, $e_B$-ultrafilter.}
\subjclass[2010]{13A15, 26A21, 54A05, 54C30, 54C50, 54H11, 54H13}

\maketitle
\section{Introduction and Preliminaries}

\noindent In \cite{AA}, we have initiated the study of the ring $B_1(X)$ of Baire one functions on a topological space $X$ and have achieved several interesting results. The bounded Baire one functions, denoted by $B_1^*(X)$ is in general a subring of $B_1(X)$. It has been proved in \cite{AA} that for a completely Hausdorff space $X$, total disconnectedness of $X$ is a necessary condition for $B_1^*(X) = B_1(X)$, however, the converse is not always true. It is therefore a question, when do we expect the converse to hold? In this paper, introducing a suitable topology on $B_1(X)$, called $m_B$-topology, we establish a couple of necessary and sufficient conditions for $B_1^*(X) = B_1(X)$. We observe that $(B_1(X), m_B)$ is a 1st countable space if and only if $B_1(X) = B_1^*(X)$, which is an analogue of the result proved in \cite{EH} for pseudocompact spaces.\\\\
\noindent Defining zero sets of Baire one functions in the usual way in \cite{AA2}, we have established the duality between the ideals of $B_1(X)$ with a collection of typical subfamilies of the zero sets of Baire one functions, called $Z_B$-filters on $X$. The study of such correspondence has resemblance to the duality between ideals of $C(X)$, the ring of all real valued continuous functions on $X$ and the $z$-filters on $X$ \cite{GJ}. It is of course a natural query whether such correspondence remains true if we confine the ideals within the class of ideals of $B_1^*(X)$. It is not hard to observe that for any $Z_B$-filter $\mathscr F$ on $X$, $Z_B^{-1}[\mathscr F] \bigcap B_1^*(X)$ is an ideal in $B_1^*(X)$. However, $Z_B$ fails to take an ideal of $B_1^*(X)$ to a $Z_B$-filter on $X$. Introducing a new type of $Z_B$-filter, called $e_B$-filter on $X$, we obtain in this paper a similar correspondence between the ideals of $B_1^*(X)$ and the $e_B$-filters on any normal topological space $X$.  \\\\
\noindent We now state some existing definitions and results that are required for this paper.    \\
The zero set $Z(f)$ of a function $f \in B_1(X)$ is defined by $Z(f) = \{x \in X :f(x) = 0\}$ and the collection of all zero sets in $B_1(X)$ is denoted by $Z(B_1(X))$.\\It is evident that, $Z(f) = Z(|f|)$, for all $f \in B_1(X)$. \\
For any real number $r$, $\mathbf{r} \in B_1(X)$ \big(or $B_1^*(X)\big)$ will always indicate the real valued constant function defined on $X$ whose value is $r$. So $Z(\mathbf{0}) = X$ and $Z(\mathbf{s}) = \emptyset$, for any $s \neq 0$.\\
If a Baire one function $f$ on $X$ is a unit in the ring $B_1(X)$ then $\{x \in X :f (x) = 0\} = \emptyset$. The following results provide some sufficient conditions to determine the units in $B_1(X)$, where $X$ is any topological space..

\begin{theorem} \label{thm1.1}
\cite{AA}	Let $X$ be a topological space and $f \in B_1(X)$ be such that
	$f(x) > 0$ $(or f(x) < 0)$, $\forall x \in X$, then $\frac{1}{f}$ exists and belongs to $B_1(X)$.
\end{theorem}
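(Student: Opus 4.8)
The plan is to work directly from the definition of a Baire one function. Since $f \in B_1(X)$, there is a sequence $\{f_n\}$ of continuous real-valued functions on $X$ with $f_n(x) \to f(x)$ for every $x \in X$. The goal is to produce from $\{f_n\}$ a sequence of \emph{continuous} functions converging pointwise to $1/f$, which then exhibits $1/f$ as an element of $B_1(X)$. The difficulty in naively taking reciprocals is that the approximants $f_n$ need not be positive: they may vanish or even be negative at some points, so the functions $1/f_n$ may be undefined or discontinuous.

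Assume first that $f(x) > 0$ for all $x \in X$. I would truncate the approximants from below by setting $g_n = \max\{f_n, 1/n\}$. Being the pointwise maximum of the continuous function $f_n$ and the constant function $1/n$, each $g_n$ is continuous, and by construction $g_n(x) \geq 1/n > 0$ for all $x$. Hence $h_n := 1/g_n$ is continuous on all of $X$, since it is the composition of $g_n$ with $t \mapsto 1/t$, the latter being continuous on $(0,\infty)$.

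The crux is to verify that $h_n \to 1/f$ pointwise. Fix $x \in X$. Because $f_n(x) \to f(x) > 0$, for all sufficiently large $n$ we have $f_n(x) > f(x)/2$; and because $1/n \to 0$, for all large $n$ we also have $1/n < f(x)/2 < f_n(x)$. Thus $g_n(x) = f_n(x)$ for all large $n$, giving $h_n(x) = 1/f_n(x) \to 1/f(x)$. Therefore $1/f$ is the pointwise limit of the continuous functions $h_n$, and so $1/f \in B_1(X)$. Note that nothing beyond continuity of the $f_n$ and pointwise convergence was used, which explains why the statement holds for an arbitrary topological space, with no normality hypothesis.

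The case $f(x) < 0$ for all $x$ reduces to the previous one: $-f$ is a positive Baire one function, so $-1/f = 1/(-f) \in B_1(X)$, whence $1/f \in B_1(X)$. I expect the only point requiring care to be the truncation level: the threshold $1/n$ must tend to $0$ so that it eventually drops below the fixed value $f(x)$ at each point (ensuring the maximum coincides with $f_n(x)$ in the limit), while remaining strictly positive so that the reciprocals $h_n$ stay continuous throughout.
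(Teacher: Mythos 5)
Your proof is correct: the truncation $g_n=\max\{f_n,1/n\}$ keeps the approximants continuous and strictly positive while eventually agreeing with $f_n$ at each point, so $1/g_n\to 1/f$ pointwise, and the negative case follows by applying this to $-f$. The present paper only quotes this theorem from \cite{AA} without reproving it, and your argument is essentially the standard one given there, so there is nothing to add.
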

\noindent Now, if $Z(f)=\{x \in X \ : \ f(x)=0\}= \emptyset$, for some $f \in B_1(X)$,  then $Z(f^2)= \emptyset$ and so by using Theorem \ref{thm1.1}, as $f^2 > 0$ there exists a $g \in B_1(X)$ such that $f(fg)=f^2g=1$, i.e., $f$ is a unit in $B_1(X)$.\\
We must note that this consequence has already been pointed out in \cite{MR}. Therefore, we have the following characterization:
\begin{theorem} \label{thm1.2}
	In any topological space $X$, $f \in B_1(X)$ is a unit in $B_1(X)$ if and only if $Z(f) = \emptyset$.
\end{theorem}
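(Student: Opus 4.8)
The plan is to dispose of the easy implication first and then concentrate on the substantial converse. If $f$ is a unit there is $g \in B_1(X)$ with $fg = 1$, whence $f(x)g(x)=1\neq 0$ forces $f(x)\neq 0$ for every $x$, i.e. $Z(f)=\emptyset$; this uses no hypothesis on $X$. For the converse I would assume $Z(f)=\emptyset$ and exhibit the pointwise inverse $1/f$ as a member of $B_1(X)$, since then $f\cdot(1/f)=1$ makes $f$ a unit. I would fix a sequence $(f_n)$ of continuous functions with $f_n\to f$ pointwise.

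My strategy is to treat the magnitude and the sign of $f$ separately, via $\tfrac1f=\mathrm{sgn}(f)\cdot\tfrac{1}{|f|}$. For the magnitude, note $|f_n|\to|f|$ pointwise and set
\[
g_n=\frac{1}{\max\!\left(|f_n|,\tfrac1n\right)}.
\]
Each $g_n$ is continuous (its denominator is continuous and bounded below by $1/n$), and for fixed $x$ the value $|f(x)|$ is a strictly positive constant $c$, so $|f_n(x)|\to c$ gives $\max(|f_n(x)|,\tfrac1n)\to c$ and hence $g_n(x)\to 1/c$. Thus $g_n\to 1/|f|$ pointwise, so $\tfrac{1}{|f|}\in B_1(X)$. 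The purpose of the truncation by $1/n$ is to produce \emph{continuous} approximants even though $|f|$ need not be bounded away from $0$ globally; this is what replaces the naive and illegitimate choice $1/|f_n|$.

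For the sign I would invoke normality. Because $f$ is a pointwise limit of continuous functions, the sets $P=\{x:f(x)>0\}$ and $N=\{x:f(x)<0\}$ are each $F_\sigma$ (the standard computation writes $\{f>a\}=\bigcup_{k}\bigcup_{m}\bigcap_{n\ge m}\{f_n\ge a+\tfrac1k\}$ as a countable union of closed sets), and since $Z(f)=\emptyset$ they partition $X$. Writing $P=\bigcup_k F_k$ and $N=\bigcup_k G_k$ with $F_k,G_k$ closed and increasing, each pair $F_k,G_k$ consists of disjoint closed sets, so normality yields via Urysohn's lemma continuous $u_k\colon X\to[0,1]$ with $u_k\equiv 1$ on $F_k$ and $u_k\equiv 0$ on $G_k$. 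A point of $P$ eventually lies in every $F_k$ and a point of $N$ eventually lies in every $G_k$, so $u_k\to\chi_P$ pointwise; hence $\chi_P\in B_1(X)$ and $\mathrm{sgn}(f)=2\chi_P-1\in B_1(X)$.

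Combining the two steps, $\tfrac1f=\mathrm{sgn}(f)\cdot\tfrac{1}{|f|}$ is a product of two members of the ring $B_1(X)$, so $1/f\in B_1(X)$ and $f$ is a unit. I expect the sign step to be the main obstacle and the only place where normality is essential: the magnitude reciprocal is handled by the elementary truncation above, whereas recovering $\mathrm{sgn}(f)$ as a Baire one function hinges on separating the closed exhaustions $F_k,G_k$ of the two (merely $F_\sigma$) sign sets, which is exactly what Urysohn's lemma on a normal space provides.
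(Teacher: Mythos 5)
Your proof is correct, and it takes a genuinely different route from the one the paper relies on. The paper cites this result from \cite{AA}, where the converse is obtained essentially in two lines from the Vesel\'y characterization recorded here as Theorem 1.3(ii): for normal $X$ one has $B_1(X)=\mathscr{F}_\sigma(X)$, so if $Z(f)=\emptyset$ then $1/f = h\circ f$ with $h(t)=1/t$ continuous on $\mathbb{R}\setminus\{0\}$, and preimages of open sets under $1/f$ are preimages under $f$ of open sets, hence $F_\sigma$, giving $1/f\in B_1(X)$. You instead build the approximating sequence by hand: the truncation $g_n=1/\max(|f_n|,1/n)$ correctly produces continuous functions converging pointwise to $1/|f|$ (and needs no hypothesis on $X$), and the sign is recovered by exhausting the disjoint $F_\sigma$ sets $\{f>0\}$ and $\{f<0\}$ by increasing closed sets and applying Urysohn's lemma, which is where normality enters; the identity $1/f=\mathrm{sgn}(f)\cdot(1/|f|)$ then finishes the argument since $B_1(X)$ is closed under products. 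Your version is longer but more self-contained --- it avoids invoking the $\mathscr{F}_\sigma$ characterization and makes visible that Urysohn separation is the only role normality plays --- whereas the paper's route is shorter at the cost of importing Theorem 1.3(ii). One small presentational point: it is worth stating explicitly that the forward implication uses only that $B_1(X)$ is a commutative ring with identity, and that the closure of $B_1(X)$ under products (used in your final step) is the elementary fact that $g_nh_n\to gh$ pointwise when $g_n\to g$ and $h_n\to h$ pointwise.
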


\noindent Clearly, if a bounded Baire one function $f $ on $X$ is bounded away from zero, i.e. there exists some $m \in \mathbb{R}$ such that $0<m\leq f(x), \forall x \in X$ then $\frac{1}{f}$ is also a bounded Baire one function on $X$.
\begin{theorem} \label{thm215}
	\cite{AA} Let $f$ be any Baire one function on $X$ and $g: \mathbb{R \rightarrow \mathbb{R}}$ be a continuous function. Then their composition $g\circ f$ is also a Baire one function.
\end{theorem}
\noindent Here we recall from \cite{LV} that Baire one functions are described in terms of pull-backs of open sets as follows:
\begin{theorem}\cite{LV} \label{P1 thm_4.1} \textnormal{(i)} For any topological space $X$ and any metric space $Y$, $B_1(X,Y)$  $\subseteq \mathscr{F_\sigma}(X,Y)$, where $B_1(X,Y)$ denotes the collection of Baire one functions from $X$ to $Y$ and $\mathscr{F_\sigma}(X,Y)=\{f:X\rightarrow Y : f^{-1}(G)$ is an $F_\sigma$ set, for any open set $G \subseteq Y$\}.\\
	\textnormal{(ii)} For a normal topological space $X$, $B_1(X,\mathbb{R})$  $= \mathscr{F_\sigma}(X,\mathbb{R})$.
\end{theorem}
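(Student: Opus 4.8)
The plan is to handle the two inclusions separately: part (i) for an arbitrary metric target reduces cleanly to the scalar case, while the reverse inclusion in part (ii) is the substantial half, and is exactly where normality is needed. First, for part (i), let $f \in B_1(X,Y)$ with $f = \lim_n f_n$ pointwise, each $f_n\colon X \to Y$ continuous, and let $G \subseteq Y$ be open. If $G = Y$ then $f^{-1}(G) = X$ is closed, hence $F_\sigma$, so I may assume $Y \setminus G \neq \emptyset$. I would reduce to real-valued functions by setting $g(x) = d(f(x), Y\setminus G)$ and $g_n(x) = d(f_n(x), Y\setminus G)$, where $d$ is the metric of $Y$. Since $y \mapsto d(y, Y\setminus G)$ is $1$-Lipschitz, each $g_n$ is continuous and $g_n \to g$ pointwise, so $g \in B_1(X,\mathbb{R})$; moreover $f^{-1}(G) = \{x : g(x) > 0\}$, because $Y\setminus G$ is closed. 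Everything then reduces to showing that a scalar Baire one function has $F_\sigma$ superlevel sets, which follows from the identity
\[ \{x : g(x) > a\} = \bigcup_{k=1}^\infty \bigcup_{N=1}^\infty \bigcap_{n \ge N}\Big\{x : g_n(x) \ge a + \tfrac1k\Big\}. \]
Each inner set is closed since $g_n$ is continuous, so the right-hand side is $F_\sigma$, and both inclusions are immediate from $g_n(x)\to g(x)$. This gives $B_1(X,Y) \subseteq \mathscr{F_\sigma}(X,Y)$ for every metric $Y$, proving (i) and, specialized to $Y=\mathbb{R}$, one of the two inclusions of (ii).

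For the nontrivial inclusion in (ii), I assume $X$ normal and $f \in \mathscr{F_\sigma}(X,\mathbb{R})$, and must produce continuous functions converging pointwise to $f$. Two lemmas organize the argument. The first is that a uniform limit of Baire one functions is again Baire one (pass to a rapidly converging subsequence and diagonalize the continuous approximants); this lets me replace $f$ by convenient uniform approximations. The second is that if $A \subseteq X$ is \emph{ambiguous}, i.e.\ simultaneously $F_\sigma$ and $G_\delta$, then $\chi_A \in B_1(X,\mathbb{R})$: writing $A = \bigcup_n F_n$ with $F_n$ closed and increasing and $A = \bigcap_n U_n$ with $U_n$ open and decreasing, the sets $F_n$ and $X \setminus U_n$ are disjoint closed sets, so Urysohn's lemma (this is where normality enters) yields continuous $h_n\colon X \to [0,1]$ with $h_n = 1$ on $F_n$ and $h_n = 0$ off $U_n$, and one checks $h_n \to \chi_A$ pointwise. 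Composing with a homeomorphism $\mathbb{R} \to (0,1)$, which preserves both membership in $\mathscr{F_\sigma}$ and being Baire one, I may further assume $f$ is bounded.

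With $f$ bounded I would approximate it uniformly by simple functions built from ambiguous sets. Because $f \in \mathscr{F_\sigma}$, every set $\{f > a\}$ is $F_\sigma$ and every set $\{f \ge b\}$ is $G_\delta$, and $\{f \ge b\} \subseteq \{f > a\}$ whenever $a < b$. Granting an insertion step (discussed below) that supplies, for each $a<b$, an ambiguous set $S$ with $\{f \ge b\} \subseteq S \subseteq \{f > a\}$, I fix the dyadic levels $k/2^n$ and choose ambiguous sets $S_{n,k}$ sandwiched between $\{f \ge (k+1)/2^n\}$ and $\{f > k/2^n\}$. Then $g_n = 2^{-n}\sum_k \chi_{S_{n,k}}$ is a finite sum of characteristic functions of ambiguous sets, hence Baire one by the second lemma, and a routine layer-cake estimate gives $\|f - g_n\|_\infty$ bounded by a fixed multiple of $2^{-n}$. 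The first lemma then yields $f = \lim_n g_n \in B_1(X,\mathbb{R})$, establishing the equality in (ii).

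The hard part, and the real crux of the theorem, is the insertion step: given an $F_\sigma$ set $Q$ and a $G_\delta$ set $P$ with $P \subseteq Q$, produce an ambiguous $S$ with $P \subseteq S \subseteq Q$. I expect this to require the full strength of normality, carried out by a recursive Urysohn construction that separates, stage by stage, the closed sets exhausting $Q$ from the closed sets exhausting $X \setminus P$, and interleaves closed and open layers so that the resulting set admits both an increasing closed exhaustion and a decreasing open co-exhaustion with equal union and intersection. Verifying that the set so obtained is genuinely both $F_\sigma$ and $G_\delta$ is the delicate point, since in a merely normal (as opposed to perfectly normal) space the classes $F_\sigma$ and $G_\delta$ are not closed under the naive operations, so a one-line formula such as $\bigcup_n (C_n \cap V_n)$ need not be ambiguous and the bookkeeping of the recursion must be tracked carefully.
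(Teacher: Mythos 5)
First, a remark on the comparison itself: the paper does not prove this statement at all --- it is imported verbatim from \cite{LV} as a preliminary --- so there is no in-paper proof to measure you against, and I can only judge your argument on its own terms.

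Your part (i) is correct and complete: the reduction of a metric target to the scalar case via $g(x)=d(f(x),Y\setminus G)$, and the identity $\{g>a\}=\bigcup_{k}\bigcup_{N}\bigcap_{n\ge N}\{g_n\ge a+\frac{1}{k}\}$, are both sound, and they also give the easy inclusion of part (ii). The auxiliary lemmas you invoke for the converse (uniform limits of Baire one functions are Baire one; characteristic functions of sets that are simultaneously $F_\sigma$ and $G_\delta$ are Baire one on a normal space via Urysohn; reduction to bounded $f$ by a homeomorphism $\mathbb{R}\to(0,1)$, which needs a small truncation argument to pull the approximants back through the inverse homeomorphism) are all standard and provable, and the layer-cake estimate $\|f-g_n\|_\infty\le 2^{-n}$ is fine.

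The genuine gap is exactly where you say it is, and saying so does not close it: the insertion step --- given a $G_\delta$ set $P$ inside an $F_\sigma$ set $Q$ in a normal space, produce a set $S$ that is both $F_\sigma$ and $G_\delta$ with $P\subseteq S\subseteq Q$ --- is asserted but never constructed. Every other ingredient of your argument is routine, so this lemma carries the entire weight of the theorem; a proof that defers it is a proof of nothing yet. Worse, it is not clear that the lemma is even true for arbitrary such pairs in a merely normal space: the natural candidate $S=\bigcup_n(B_n\setminus A_n)$ (with $Q=\bigcup_n B_n$ and $X\setminus P=\bigcup_n A_n$, $A_n,B_n$ closed increasing) satisfies $P\subseteq S\subseteq Q$ but is a countable union of sets of the form $\text{closed}\cap\text{open}$, which need not be $F_\sigma$ outside the perfectly normal setting, and you offer no replacement beyond the hope of ``a recursive Urysohn construction.'' At minimum you should restrict the claim to the pairs you actually use, namely $\{f\ge b\}\subseteq\{f>a\}$ for $f\in\mathscr{F_\sigma}(X,\mathbb{R})$, and exploit the extra structure available there (the whole interpolating family $\{f\ge c\}\subseteq\{f>c\}$ for $a<c<b$, each $G_\delta$ inside an $F_\sigma$); alternatively, note that the weaker ``Baire one Urysohn lemma'' --- a Baire one $\varphi:X\to[0,1]$ with $\varphi=1$ on $P$ and $\varphi=0$ off $Q$ --- would suffice to run a partition-of-unity version of your approximation and may be easier to establish than the existence of an ambiguous set. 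As it stands, the hard half of (ii) is not proved.
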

\begin{theorem}
\cite{AA}	For any $f \in B_1(X)$, $Z(f)$ is a $G_\delta$ set.
\end{theorem}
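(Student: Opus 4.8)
The plan is to reduce the statement to the structural fact about Baire one functions recorded in Theorem~\ref{P1 thm_4.1}(i), namely that $f^{-1}(G)$ is an $F_\sigma$ set for every open $G \subseteq \mathbb{R}$. The natural temptation is to write the singleton $\{0\}$ as a $G_\delta$ set, $\{0\} = \bigcap_{n \geq 1} \left(-\tfrac{1}{n}, \tfrac{1}{n}\right)$, and to pull this representation back through $f$; but this fails, because $f^{-1}$ of an open set is only guaranteed to be $F_\sigma$ (not open), and a countable intersection of $F_\sigma$ sets need not be $G_\delta$. The correct move is therefore to pass to complements.

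First I would set $U = \mathbb{R} \setminus \{0\}$, which is an open subset of $\mathbb{R}$ since $\{0\}$ is closed. Because $f \in B_1(X) = B_1(X,\mathbb{R})$ and $\mathbb{R}$ is a metric space, Theorem~\ref{P1 thm_4.1}(i) applies for any topological space $X$ and yields that $f^{-1}(U)$ is an $F_\sigma$ set in $X$. I would then observe that $Z(f) = f^{-1}(\{0\}) = X \setminus f^{-1}(U)$, so that $Z(f)$ is precisely the complement of an $F_\sigma$ set.

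To finish, I would invoke the elementary duality between $F_\sigma$ and $G_\delta$ sets: if $f^{-1}(U) = \bigcup_{n} F_n$ with each $F_n$ closed, then $Z(f) = \bigcap_{n} (X \setminus F_n)$ is a countable intersection of open sets, hence $G_\delta$. There is no serious obstacle once the complement formulation is in place; the only point requiring care is the one flagged above, that the $G_\delta$ structure of $\{0\}$ in $\mathbb{R}$ cannot be transported directly and must be routed through the $F_\sigma$ behaviour of preimages of open sets. Note also that the argument uses only part (i) of Theorem~\ref{P1 thm_4.1}, so normality of $X$ is not needed and the conclusion holds for an arbitrary topological space, consistent with the generality of the statement.
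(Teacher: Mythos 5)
Your proof is correct: the paper states this result as a citation to \cite{AA} without reproducing a proof, and your derivation via Theorem~\ref{P1 thm_4.1}(i) --- taking $Z(f)$ as the complement of the $F_\sigma$ set $f^{-1}(\mathbb{R}\setminus\{0\})$ --- is exactly the standard argument, valid for an arbitrary topological space $X$ as you note. Your aside about why one cannot simply pull back $\{0\}=\bigcap_n(-\tfrac{1}{n},\tfrac{1}{n})$ is also well taken.
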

\begin{definition}
	A nonempty subcollection $\mathscr F$ of $Z(B_1(X))$ is said to be a $Z_B$-filter on $X$, if it satisfies the following conditions:\\
	(1) $\emptyset \notin \mathscr F$\\
	(2) if $Z_1, Z_2 \in \mathscr F$, then $Z_1 \cap Z_2 \in \mathscr F$\\
	(3) if $Z \in \mathscr F$ and $Z' \in  Z(B_1(X))$ is such that $Z \subseteq Z'$, then $Z' \in \mathscr F$.
\end{definition}
\begin{definition}
	A $Z_B$-filter is called $Z_B$-ultrafilter on $X$, if it is not properly contained in any other $Z_B$-filter on $X$.
\end{definition}
\begin{theorem}
	\cite{AA2} If $I$ is any ideal of $B_1(X)$, then $Z_B[I]=\{Z(f): f \in I \}$ is a $Z_B$-filter on $X$.
\end{theorem}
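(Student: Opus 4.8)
The plan is to verify directly the three defining conditions of a $Z_B$-filter for the collection $Z_B[I]$, reading the statement with $I$ a \emph{proper} ideal (this hypothesis is needed, since if $1 \in I$ then $\emptyset = Z(1) \in Z_B[I]$ and condition (1) fails). That $Z_B[I]$ is a nonempty subcollection of $Z(B_1(X))$ is immediate and should be dispatched first: every ideal contains $0$, so $X = Z(0) \in Z_B[I]$, and by construction each member of $Z_B[I]$ is the zero set of some $f \in I \subseteq B_1(X)$.

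I would handle the two purely \emph{algebraic} closure conditions next, as they use only the ring-and-ideal structure. For condition (2), given $Z_1 = Z(f)$ and $Z_2 = Z(g)$ with $f,g \in I$, I would take $h = f^2 + g^2$. Since $B_1(X)$ is a ring and $I$ an ideal, $h \in I$; and because $f^2,g^2$ are nonnegative, $h(x)=0$ exactly when $f(x)=g(x)=0$, so $Z(h) = Z(f)\cap Z(g) = Z_1 \cap Z_2$, giving $Z_1 \cap Z_2 \in Z_B[I]$. For condition (3), given $Z = Z(f) \in Z_B[I]$ with $f \in I$ and any $Z' = Z(g) \in Z(B_1(X))$ with $Z \subseteq Z'$, I would take $h = fg$. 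The ideal absorbs multiplication by the ring element $g$, so $h \in I$, and $Z(fg) = Z(f)\cup Z(g) = Z \cup Z' = Z'$, whence $Z' \in Z_B[I]$.

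The step that is not purely formal --- and the one I expect to be the main obstacle --- is condition (1), namely $\emptyset \notin Z_B[I]$, because here one must exclude a member of $I$ with empty zero set \emph{without} assuming $X$ normal (so Theorem 1.1 is unavailable in general). My plan is to argue by contradiction: suppose $f \in I$ has $Z(f) = \emptyset$. Then $f^2 \in I$ and $f^2(x) > 0$ for every $x \in X$, so by Theorem 1.2 the reciprocal $\frac{1}{f^2}$ exists and lies in $B_1(X)$. Since $I$ absorbs multiplication by ring elements and $f^2 \in I$, it follows that $1 = f^2 \cdot \frac{1}{f^2} \in I$, contradicting the properness of $I$. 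Hence no member of $I$ has empty zero set, i.e. $\emptyset \notin Z_B[I]$. With all three conditions established, $Z_B[I]$ is a $Z_B$-filter on $X$; the only delicate point is precisely the appeal to Theorem 1.2 (in place of Theorem 1.1) so that condition (1) goes through for an arbitrary topological space.
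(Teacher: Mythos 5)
Your proof is correct; the paper itself gives no proof here (the result is quoted from \cite{AA2}), and your argument is the standard one modelled on the corresponding fact for $C(X)$: $f^2+g^2$ for intersections, $fg$ for supersets, and properness of $I$ ruling out $\emptyset$. You are also right to flag the two genuinely non-formal points --- that ``ideal'' must be read as \emph{proper} ideal, and that condition (1) for an arbitrary (not necessarily normal) space should go through Theorem 1.2 applied to $f^2>0$ rather than Theorem 1.1.
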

\begin{theorem}
\cite{AA2}	For any $Z_B$-filter $\mathscr F$ on $X$, $Z_B^{-1}[\mathscr F]=\{f\in B_1(X): Z(f) \in \mathscr F\}$ is an ideal in $B_1(X)$.
\end{theorem}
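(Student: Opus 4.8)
The plan is to verify directly that $I := Z_B^{-1}[\mathscr{F}]$ satisfies the three defining properties of an ideal in the commutative ring $B_1(X)$: that it is nonempty, that it is closed under addition (and additive inverses), and that it absorbs multiplication by arbitrary elements of $B_1(X)$. Throughout, the two elementary set-theoretic inclusions relating zero sets to the ring operations will do the real work, namely $Z(f)\cap Z(g)\subseteq Z(f+g)$ and $Z(f)\subseteq Z(gf)$; combined with the filter axioms these give everything.

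First I would observe that $I$ is nonempty. Since $\mathscr{F}$ is a $Z_B$-filter it contains some zero set $Z$, and as $Z\subseteq X = Z(0)$ with $Z(0)\in Z(B_1(X))$, axiom (3) forces $X\in\mathscr{F}$; hence the constant function $0$ lies in $I$.

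Next, for closure under addition, take $f,g\in I$, so that $Z(f),Z(g)\in\mathscr{F}$. Because $B_1(X)$ is a ring, $f+g\in B_1(X)$ and therefore $Z(f+g)\in Z(B_1(X))$. If $x\in Z(f)\cap Z(g)$ then $f(x)=g(x)=0$, so $(f+g)(x)=0$; thus $Z(f)\cap Z(g)\subseteq Z(f+g)$. By filter axiom (2), $Z(f)\cap Z(g)\in\mathscr{F}$, and then axiom (3) gives $Z(f+g)\in\mathscr{F}$, i.e. $f+g\in I$. Closure under additive inverses is immediate, since $Z(-f)=Z(f)\in\mathscr{F}$.

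Finally, for the absorption property, take $f\in I$ and any $g\in B_1(X)$; then $gf\in B_1(X)$, so $Z(gf)\in Z(B_1(X))$. Whenever $f(x)=0$ we have $(gf)(x)=g(x)\,f(x)=0$, so $Z(f)\subseteq Z(gf)$; since $Z(f)\in\mathscr{F}$, axiom (3) yields $Z(gf)\in\mathscr{F}$, i.e. $gf\in I$. I expect no genuine obstacle here, as the argument is a routine transcription of the classical correspondence for $C(X)$. The only points requiring care are to confirm that $f+g$ and $gf$ are again Baire one functions (so that their zero sets indeed lie in $Z(B_1(X))$, which is where the upward-closure axiom (3) operates) and to invoke the correct direction of each zero-set inclusion.
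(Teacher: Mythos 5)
Your argument is correct and is exactly the standard verification; the paper itself gives no proof of this statement (it is quoted from \cite{AA2}), and the classical argument from the $C(X)$ setting that you transcribe — using $Z(f)\cap Z(g)\subseteq Z(f+g)$ and $Z(f)\subseteq Z(gf)$ together with the filter axioms — is precisely what is intended. The only point you might add is that the ideal is automatically proper, since $Z(1)=\emptyset\notin\mathscr F$ forces $1\notin Z_B^{-1}[\mathscr F]$, which is the role played by the axiom $\emptyset\notin\mathscr F$.
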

\noindent Let $\mathscr I_B$ be the collection of all ideals in $B_1(X)$ and $\mathscr F_B(X)$ be the collection of all $Z_B$-filters on $X$. The map $Z_B: \mathscr I_B \mapsto \mathscr F_B(X)$ defined by $Z_B(I)=Z_B[I], \forall I \in \mathscr I_B$ is surjective but not injective in general. Although, the restriction map $Z_B: \MMM(B_1(X)) \mapsto \Omega_B(X)$ is a bijection, where $\MMM(B_1(X))$ and $\Omega_B(X)$ are respectively the collection of all maximal ideals in $B_1(X)$ and the collection of all $Z_B$-ultrafilters on $X$ \cite{AA2}.

\noindent A criterion to paste two Baire one functions to obtain a Baire one function on a bigger domain is proved for normal topological spaces in the following theorem:
\begin{theorem}\label{lem0.5}
(\textbf{Pasting Lemma})
    	Let $X$ be a normal topological space with $X=A \cup B$, where both $A$ and $B$ are $G_\delta$ sets in $X$. If $f : A\mapsto \mathbb{R}$ and $g : B\mapsto \mathbb{R}$ are Baire one functions so that $f(x)=g(x)$, for all $x \in A\cap B$, then the map $h: X \mapsto \mathbb{R}$ defined by \\
    	\[ h(x)= \begin{cases} 
    	f(x) &$ if $x \in A$$ \\
    	
    	g(x) &$ if x $\in B$$.

    	\end{cases}
    	\]
    	is also a Baire one function.
    	
    \end{theorem}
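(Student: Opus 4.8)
The plan is to reduce everything to the $F_\sigma$ criterion of Theorem~\ref{P1 thm_4.1}. Since $X$ is normal, part (ii) says that a map $h\colon X\ra\RR$ is Baire one precisely when $h^{-1}(G)$ is an $F_\sigma$ set in $X$ for every open $G\subseteq\RR$, so it suffices to verify this for the pasted map $h$. First I would record the set-theoretic identity $h^{-1}(G)=f^{-1}(G)\cup g^{-1}(G)$, where $f^{-1}(G)\subseteq A$ and $g^{-1}(G)\subseteq B$, the two preimages agreeing on $A\cap B$ because $f=g$ there. Applying part (i) of Theorem~\ref{P1 thm_4.1} to the Baire one functions $f$ and $g$ yields $f^{-1}(G)=\bigcup_n F_n$ with each $F_n$ closed in $A$, and $g^{-1}(G)=\bigcup_m H_m$ with each $H_m$ closed in $B$. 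Finally, since $A$ and $B$ are $G_\delta$ in $X$, their complements are $F_\sigma$; I would write $X\setminus B=\bigcup_k E_k$ and $X\setminus A=\bigcup_k D_k$ with $E_k,D_k$ closed in $X$, observing that $E_k\subseteq A$ and $D_k\subseteq B$ because $A\cup B=X$.

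Next I would split $X=(X\setminus B)\cup(X\setminus A)\cup(A\cap B)$ and treat the three pieces of $h^{-1}(G)$ in turn. On $X\setminus B$ the map $h$ agrees with $f$, so $h^{-1}(G)\cap(X\setminus B)=\bigcup_{n,k}(\overline{F_n}\cap E_k)$, where $\overline{F_n}$ is the closure of $F_n$ in $X$; since $E_k\subseteq A$ and $F_n=\overline{F_n}\cap A$, each $\overline{F_n}\cap E_k$ is closed in $X$, so this piece is $F_\sigma$ in $X$. The symmetric argument, using $g$ and the $D_k$, handles $h^{-1}(G)\cap(X\setminus A)$. These two pieces are routine; the genuine difficulty is the overlap $h^{-1}(G)\cap(A\cap B)=f^{-1}(G)\cap g^{-1}(G)$, because $A\cap B$ is only a $G_\delta$ set, and a set that is $F_\sigma$ in the subspace $A\cap B$ need not be $F_\sigma$ in $X$ (for example, the irrationals inside $\RR$).

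I expect this overlap to be the main obstacle, and the key device is to pass to closures in $X$. Setting $\widetilde F_n=\overline{F_n}$ and $\widetilde H_m=\overline{H_m}$ (closures in $X$), I would define the $F_\sigma$ set $W=\bigcup_{n,m}(\widetilde F_n\cap\widetilde H_m)$ and establish the sandwich $f^{-1}(G)\cap g^{-1}(G)\subseteq W\subseteq h^{-1}(G)$. The left inclusion is immediate from $F_n\cap H_m\subseteq\widetilde F_n\cap\widetilde H_m$. For the right inclusion, the crucial observation is that any $x\in\widetilde F_n\cap\widetilde H_m$ lies in $A$ or in $B$: if $x\in A$ then $x\in\widetilde F_n\cap A=F_n\subseteq f^{-1}(G)$, while if $x\in B$ then $x\in\widetilde H_m\cap B=H_m\subseteq g^{-1}(G)$, and in either case $x\in h^{-1}(G)$. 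Combining the three pieces then gives $h^{-1}(G)=\big(h^{-1}(G)\cap(X\setminus B)\big)\cup\big(h^{-1}(G)\cap(X\setminus A)\big)\cup W$, a union of three $F_\sigma$ sets and hence $F_\sigma$ in $X$; Theorem~\ref{P1 thm_4.1}(ii) then yields that $h$ is Baire one. The only point to verify carefully is that $F_n$ closed in $A$ indeed gives $\overline{F_n}\cap A=F_n$, which is just the description of the subspace topology.
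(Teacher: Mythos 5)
Your proof is correct, but it takes a genuinely different and considerably longer route than the paper's. You test the $F_\sigma$ criterion on open sets $G$ directly, and you correctly identify the resulting obstacle: $f^{-1}(G)$ and $g^{-1}(G)$ are only $F_\sigma$ in the subspaces $A$ and $B$, and an $F_\sigma$ subset of a $G_\delta$ subspace need not be $F_\sigma$ in $X$. Your three-piece decomposition $X=(X\setminus B)\cup(X\setminus A)\cup(A\cap B)$ together with the sandwich $f^{-1}(G)\cap g^{-1}(G)\subseteq W\subseteq h^{-1}(G)$ (with $W$ built from closures in $X$) does resolve this, and every step checks out, including the right-hand inclusion via $\widetilde F_n\cap A=F_n$ and $\widetilde H_m\cap B=H_m$. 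The paper instead dualizes: it tests an arbitrary closed set $C\subseteq\RR$, so that $h^{-1}(C)=f^{-1}(C)\cup g^{-1}(C)$ where the two preimages are $G_\delta$ in $A$ and $B$ respectively; since a $G_\delta$ subset of a $G_\delta$ subspace is automatically $G_\delta$ in $X$ (unlike the $F_\sigma$ analogue), and a finite union of $G_\delta$ sets is $G_\delta$, the whole difficulty you wrestle with simply evaporates, and the proof is three lines. Both arguments invoke Theorem~\ref{P1 thm_4.1} in the same way (part (i) for $f$ and $g$ on the subspaces, part (ii) with normality of $X$ to conclude $h$ is Baire one). What your version buys is an explicit picture of where the asymmetry between $F_\sigma$ and $G_\delta$ bites and how to work around it; what the paper's version buys is brevity, by choosing the complementary class of test sets so that the transitivity one needs is the one that actually holds.
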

\begin{proof}
	Let $C$ be any closed set in $\mathbb{R}$. Then $h^{-1}(C)=f^{-1}(C) \bigcup g^{-1}(C)$. By Theorem \ref{P1 thm_4.1}, $f^{-1}(C)$ and $g^{-1}(C)$ are $G_\delta$ sets in $A$ and $B$ respectively. Since  $A$ and $B$ are $G_\delta$ sets in $X$, $f^{-1}(C)$ and $g^{-1}(C)$ are $G_\delta$ sets in $X$. We know finite union of $G_\delta$ sets is $G_\delta$, so $h^{-1}(C)$ is a $G_\delta$ set in $X$. Therefore, using Theorem \ref{P1 thm_4.1}, we have $h$ is a Baire one function.
\end{proof}
\noindent The result remains true if the $G_\delta$ sets are replaced by $F_\sigma$ sets. The proof being exactly the same as that of Theorem~\ref{lem0.5}, we only state the result as follows:
\begin{theorem}
Let $X$ be a normal topological space with $X=A \cup B$, where both $A$ and $B$ are $F_\sigma$ sets in $X$. If $f : A\mapsto \mathbb{R}$ and $g : B\mapsto \mathbb{R}$ are Baire one functions so that $f(x)=g(x)$, for all $x \in A\cap B$, then the map $h: X \mapsto \mathbb{R}$ defined by \\
    	\[ h(x)= \begin{cases} 
    	f(x) &$ if $x \in A$$ \\
    	
    	g(x) &$ if x $\in B$$.

    	\end{cases}
    	\]
    	is also a Baire one function.
\end{theorem}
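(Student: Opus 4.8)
The plan is to dualize the proof of Theorem~\ref{lem0.5} verbatim, exchanging the pair (closed set, $G_\delta$) for the pair (open set, $F_\sigma$); this is exactly where the $F_\sigma$ hypothesis on $A$ and $B$ will be exploited. So instead of testing $h$ against closed subsets of $\mathbb{R}$, I would fix an arbitrary open set $G \subseteq \mathbb{R}$. Because $X = A \cup B$ and $h$ agrees with $f$ on $A$ and with $g$ on $B$ (the two being consistent on $A \cap B$), the preimage splits as $h^{-1}(G) = f^{-1}(G) \cup g^{-1}(G)$, where $f^{-1}(G) \subseteq A$ and $g^{-1}(G) \subseteq B$. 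My goal is then to show this set is $F_\sigma$ in $X$ and invoke the $F_\sigma$ characterization of Baire one functions.

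Next I would handle each piece. Since $f$ and $g$ are Baire one, Theorem~\ref{P1 thm_4.1}(i) gives that $f^{-1}(G)$ is $F_\sigma$ in the subspace $A$ and $g^{-1}(G)$ is $F_\sigma$ in the subspace $B$. The main (and only genuine) step is to promote these from ``$F_\sigma$ in the subspace'' to ``$F_\sigma$ in $X$.'' This is precisely the analogue of the sentence in the proof of Theorem~\ref{lem0.5} asserting that $G_\delta$ subsets of a $G_\delta$ subspace are $G_\delta$ in $X$. Here I would record the small transfer fact: a set that is closed in the subspace $A$ has the form $C \cap A$ with $C$ closed in $X$, and if $A = \bigcup_n K_n$ with each $K_n$ closed in $X$, then $C \cap A = \bigcup_n (C \cap K_n)$ is $F_\sigma$ in $X$; a countable union of such sets shows that any $F_\sigma$ subset of $A$ is already $F_\sigma$ in $X$. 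The same applies to $B$.

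Finally, since a finite union of $F_\sigma$ sets is $F_\sigma$, the set $h^{-1}(G) = f^{-1}(G) \cup g^{-1}(G)$ is $F_\sigma$ in $X$ for every open $G \subseteq \mathbb{R}$. As $X$ is normal, Theorem~\ref{P1 thm_4.1}(ii) then yields $h \in B_1(X)$, completing the argument. I do not anticipate a real obstacle: the only nontrivial point is the subspace-to-space transfer of $F_\sigma$-ness, and that is exactly where the hypothesis that $A$ and $B$ are $F_\sigma$ in $X$ (together with normality, for the final characterization) is used; everything else is the routine decomposition of the preimage and closure of the $F_\sigma$ class under finite unions.
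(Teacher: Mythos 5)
Your proposal is correct and is exactly the argument the paper intends: the paper omits the proof precisely because it is the verbatim dualization of Theorem~\ref{lem0.5}, testing against open sets and replacing $G_\delta$ by $F_\sigma$ throughout, which is what you do. Your explicit justification of the subspace-to-space transfer of $F_\sigma$-ness is a detail the paper leaves implicit (in both versions), and it is carried out correctly.
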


\section{$m_B$-topology - An extension of uniform norm topology}
\noindent Let $X$ be any topological space. Defining the `$\sup$'-norm as usual we get that $B_1^*(X)$ is a Banach space. The topology induced by the $\sup$-norm is known as the uniform norm topology and it is not hard to see that $B_1^*(X)$ with uniform norm topology is a topological ring. For any $f \in B_1^*(X)$, the collection $\{B(f, \epsilon) \ : \ \epsilon >0\}$, where $B(f, \epsilon)= \{g \in B_1^*(X) \ : \ ||f-g|| \leq \epsilon\}$, forms a base for neighbourhood system at $f$ in the uniform norm topology on $B_1^*(X)$.
\begin{theorem}\label{units_1}
The set \index{$\mathscr{U}_{B^*}$}$\mathscr{U}_{B^*}$ of all units of $B_1^*(X)$ is an open set in $B_1^*(X)$ with the uniform norm topology. 
\end{theorem}
\begin{proof}
	Let $f \in \mathscr{U}_{B^*}$. So, $f$ is bounded away from $0$, i.e., $\exists$ $\lambda > 0$ such that $f(x) > \lambda$, for all $x \in X$. We observe that for any $g \in B(f, \frac{\lambda}{2})$, $|f(x)-g(x)| \leq \dfrac{\lambda}{2}$, for all $x \in X$. This implies that, $\lambda < |f(x)| \leq |f(x)-g(x)|+|g(x)| \leq \dfrac{\lambda}{2}+|g(x)|$, for all $x \in X$. Therefore, $|g(x)| > \frac{\lambda}{2}$, for all $x \in X$. Hence, $g \in \mathscr{U}_{B^*}$ and $B(f, \frac{\lambda}{2}) \subseteq \mathscr{U}_{B^*}$. This completes the proof that $\mathscr{U}_{B^*}$ is an open set.
\end{proof}

\begin{theorem}\label{closure_ideal}
If $I$ is a proper ideal in $B_1^*(X)$ then $\overline{I}$ (Closure of $I$ in the uniform norm topology) is also a proper ideal in $B_1^*(X)$.
\end{theorem}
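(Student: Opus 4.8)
The plan is to establish the two assertions separately: that $\overline{I}$ is an ideal, and that it is proper. I would first note that the uniform norm topology is induced by the metric $d(f,g)=||f-g||$, so closures may be computed sequentially and I am free to argue with convergent sequences drawn from $I$ rather than with general nets.

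To see that $\overline{I}$ is an ideal, I would take $f,g\in\overline{I}$ and pick sequences $(f_n),(g_n)$ in $I$ with $f_n\to f$ and $g_n\to g$. Since $B_1^*(X)$ is a topological ring under this topology, addition and multiplication by a fixed element are continuous. Concretely, $||(f_n-g_n)-(f-g)||\le||f_n-f||+||g_n-g||\to 0$, so $f-g\in\overline{I}$ because each $f_n-g_n\in I$. Likewise, for any $h\in B_1^*(X)$ the submultiplicativity of the sup-norm gives $||hf_n-hf||\le||h||\,||f_n-f||\to 0$, and together with $hf_n\in I$ this yields $hf\in\overline{I}$. Hence $\overline{I}$ is closed under subtraction and absorbs multiplication by arbitrary elements of $B_1^*(X)$, so it is an ideal.

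The substantive point is properness, and here I would invoke Theorem~\ref{units_1}. Since $I$ is a proper ideal it contains no unit, for otherwise a unit $u\in I$ would force $1=u^{-1}u\in I$ and hence $I=B_1^*(X)$. In particular every positive unit lies outside $I$, so $U_{B^*}^+\cap I=\emptyset$. Now the constant function $1$ is a positive unit, and by Theorem~\ref{units_1} the set $U_{B^*}^+$ is an open neighbourhood of $1$. Thus $1$ possesses an open neighbourhood disjoint from $I$, which means $1\notin\overline{I}$. Consequently $\overline{I}\neq B_1^*(X)$, i.e. $\overline{I}$ is proper.

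I expect the ideal part to be entirely routine, essentially a restatement of the continuity of the ring operations. The only step that genuinely uses the structure of $B_1^*(X)$, and therefore the main obstacle, is properness: the argument hinges on the openness of the set of positive units established in Theorem~\ref{units_1}. Without an open neighbourhood of the identity that avoids $I$, there would be no obstruction to $1$ entering the closure, and the conclusion could fail.
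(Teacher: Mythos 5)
Your proposal is correct and follows essentially the same route as the paper: the ideal property comes from $B_1^*(X)$ being a topological ring (which you merely unpack via sequences and the submultiplicativity of the sup-norm), and properness comes from the openness of $U_{B^*}^+$ (Theorem~\ref{units_1}) together with $I\cap U_{B^*}^+=\emptyset$, forcing $1\notin\overline{I}$. The paper phrases this last step as $\overline{I}\subseteq B_1^*(X)\setminus U_{B^*}^+$ with the complement closed, which is the same observation as your "open neighbourhood of $1$ disjoint from $I$."
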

\begin{proof}
Since $B_1^*(X)$ is a topological ring, $\overline{I}$ is an ideal. We show that $\overline{I}$ is a proper ideal of $B_1^*(X)$. Since $I$ does not contain any unit, $\mathscr{U}_{B^*} \cap I = \emptyset \implies I \subseteq B_1^*(X) \setminus U_{B^*}$. Now $\mathscr{U}_{B^*}$ being an open set in uniform norm topology, $B_1^*(X) \setminus\mathscr{U}_{B^*}$ is a closed set  containing $I$ which implies $\overline{I} \subseteq  B_1^*(X) \setminus \mathscr{U}_{B^*}$. Therefore, $\overline{I} \cap \mathscr{U}_{B^*} = \emptyset$. Consequently, $1 \notin \overline{I}$, proving $\overline{I}$ a proper ideal of $B_1^*(X)$.
\end{proof}
\begin{corollary}
Each maximal ideal of $B_1^*(X)$ is a closed set in the uniform norm topology.
\end{corollary}

\noindent The natural extension of uniform norm topology on $B_1(X)$ would be the one for which $\{U(f,\epsilon): \epsilon >0\}$ is a base for neighbourhood system at $f\in B_1(X)$, where $U(f, \epsilon)$= $\{g \in B_1(X) \ : \ |f(x)-g(x)|\leq \epsilon, \text{ for all } x \in X\}$. We call this topology the $u_B$-topology on $B_1(X)$. Clearly on $B_1^*(X)$, the subspace topology obtained from the $u_B$-topology coincides with the uniform norm topology. so in that case, $B_1(X)$ is a topological ring and the set of all units in $B_1(X)$ forms an open set in this topology. However, it is neither a topological ring nor the set of all units in $B_1(X)$ forms an open set in this topology, unless $B_1(X) = B_1^*(X)$. 
\begin{theorem}
If $B_1(X) \neq B_1^*(X)$ then $B_1(X)$ is not a topological ring with respect to $u_B$-topology.
\end{theorem}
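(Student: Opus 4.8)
The plan is to exhibit explicit failures of continuity of multiplication (to defeat the topological ring structure) and of scalar multiplication (to defeat the topological vector space structure), both witnessed at points involving an unbounded function. Since $B_1(X) \neq B_1^*(X)$, I would begin by fixing a function $f \in B_1(X) \setminus B_1^*(X)$, so that $\abs{f}$ is unbounded on $X$: for every $M > 0$ there is some $x \in X$ with $\abs{f(x)} > M$. The entire argument rests on the single observation that scaling an unbounded function by any nonzero constant leaves it unbounded, so such a product can never lie in a basic neighbourhood $U(0,\epsilon)$ of the zero function, since membership in $U(0,\epsilon)$ demands a uniform bound over all of $X$.

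For the topological ring claim, I would test continuity of the multiplication map $(f',g') \mapsto f'g'$ at the point $(f,0)$, noting $f \cdot 0 = 0$. A basic neighbourhood of the image is $U(0,\epsilon)$ for small $\epsilon > 0$, and every basic neighbourhood of $(f,0)$ has the form $U(f,\delta_1) \times U(0,\delta_2)$. I would then choose the test pair $f' = f$, which lies in $U(f,\delta_1)$ for every $\delta_1 > 0$, and $g'$ equal to the constant function of value $\frac{\delta_2}{2}$, which lies in $U(0,\delta_2)$. Their product is $\frac{\delta_2}{2} f$, still unbounded, hence outside $U(0,\epsilon)$. As this holds for every choice of $\delta_1,\delta_2 > 0$, multiplication is discontinuous at $(f,0)$, so $(B_1(X), u_B)$ is not a topological ring.

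For the topological vector space claim, I would argue analogously, testing continuity of scalar multiplication $(\alpha,g) \mapsto \alpha g$ at the point $(0,f)$, where $0 \cdot f = 0$. Given the target neighbourhood $U(0,\epsilon)$ and any basic neighbourhood $(-\delta_1,\delta_1) \times U(f,\delta_2)$ of $(0,f)$, I would choose $\alpha = \frac{\delta_1}{2} \neq 0$ and $g = f$; then $\alpha g = \frac{\delta_1}{2} f$ is again unbounded and escapes $U(0,\epsilon)$. Since addition is already known to be continuous (the space is a topological group), these two discontinuities are precisely what is needed to rule out both structures.

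I do not expect a genuine obstacle here; the content is a careful negation of the definition of continuity at a point. The only step demanding attention is respecting the precise definition of the $u_B$-topology—in particular that $U(0,\epsilon)$ forces a bound uniform over all of $X$, which a scaled unbounded function cannot meet—and checking that the chosen test functions genuinely belong to the prescribed basic neighbourhoods, which is immediate since $f$ itself and all constant functions lie in $B_1(X)$.
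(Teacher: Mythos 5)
Your proposal is correct and follows essentially the same route as the paper: both fix an unbounded $f \in B_1(X)\setminus B_1^*(X)$, pair it with a small nonzero constant function lying in a basic neighbourhood of $0$, and observe that the product is still unbounded and so escapes a fixed uniform neighbourhood $U(0,\epsilon)$ of the zero function, defeating continuity of both multiplication and scalar multiplication at the same point. The only cosmetic difference is that the paper normalizes $f \geq 1$ and fixes the target neighbourhood as $U(0,1)$, neither of which changes the argument.
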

\begin{proof}
	Let $X$ be a topological space such that $B_1(X) \neq B_1^*(X)$. So, there exists an $f \in B_1(X) \setminus B_1^*(X)$ with $f(x) \geq 1$, $\forall x \in X$. We shall show that the multiplication operation is not continuous at the point $(\mathbf{0}, f)$, for any $f \in B_1(X)$. We choose a neighbourhood $U(\mathbf{0},1)$ of the constant function $\mathbf{0}$. Clearly each function in $U(\mathbf{0},1)$ is bounded on $X$. Let $U(\mathbf{0},\epsilon)$ and $U(f, \delta)$ be arbitrary basic neighbourhoods of $\mathbf{0}$ and $f$ in $B_1(X)$ with $u_B$-topology. We notice that, the constant function $\mathbf{\dfrac{\boldsymbol{\epsilon}}{2}} \in U(\mathbf{0},\epsilon)$ and $f \in U(f, \delta)$ but $\mathbf{\dfrac{\boldsymbol{\epsilon}}{2}}.f$ is an unbounded function. So, $\mathbf{\dfrac{\boldsymbol{\epsilon}}{2}}.f \notin U(\mathbf{0},1)$. This proves that the multiplication is not continuous. Hence, $B_1(X)$ with $u_B$-topology is not a topological ring.
\end{proof}
\noindent In the next theorem we show that the collection of all units of $B_1(X)$ is not an open set in $u_B$-topology, if $B_1(X) \neq B_1^*(X)$.   
\begin{theorem}
If $B_1(X) \neq B_1^*(X)$ then \index{$\mathscr{U}_{B}$}$\mathscr{U}_{B}$, the collection of all units in $B_1(X)$ is not an open set in $u_B$-topology.
\end{theorem}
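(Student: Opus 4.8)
The plan is to exhibit a single positive unit of $B_1(X)$ that fails to be an interior point of $W_B^+$; since an open set must contain a basic $u_B$-neighbourhood of each of its points, producing one such bad point suffices. The guiding intuition is exactly the mechanism behind Theorem~\ref{units_1}: openness of the positive units rests on every positive unit being bounded away from $0$, and this is precisely what breaks down once unbounded Baire one functions are present.

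First I would use the hypothesis $B_1(X) \neq B_1^*(X)$ to fix an unbounded function in $B_1(X)$ and normalise it: replacing it by $\abs{h}+1$, I may assume there is $h \in B_1(X)$ with $h \geq 1$ on $X$ and $\sup_{x \in X} h(x) = \infty$ (note that $\abs{h}+1$ is again Baire one and is unbounded whenever the original function is). Next I would invert. Since $h(x) \geq 1 > 0$ for all $x$, the sufficient criterion guaranteeing that $f(x)>0$ for all $x$ implies $\tfrac{1}{f}\in B_1(X)$ yields $f := \tfrac{1}{h} \in B_1(X)$, with $f(x) > 0$ everywhere, so $f$ is a positive unit, i.e.\ $f \in W_B^+$. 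The decisive property is that $f$ is \emph{not} bounded away from $0$: because $h$ is unbounded above, $\inf_{x \in X} f(x) = 0$ while $f > 0$ pointwise.

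Finally I would show $f$ has no $u_B$-neighbourhood inside $W_B^+$. Fix any $\epsilon > 0$ and consider the translate $g := f - \epsilon$, which lies in $B_1(X)$ as a difference of a Baire one function and a constant. Then $\abs{f(x)-g(x)} = \epsilon$ for every $x$, so $g \in U(f,\epsilon)$. On the other hand, $\inf_{x\in X} f(x) = 0 < \epsilon$ furnishes a point $x_0$ with $f(x_0) < \epsilon$, whence $g(x_0) = f(x_0) - \epsilon < 0$; thus $g$ is not positive and $g \notin W_B^+$. Consequently $U(f,\epsilon) \not\subseteq W_B^+$ for every $\epsilon > 0$, so $f$ is not interior to $W_B^+$, and $W_B^+$ is not open.

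I expect no serious obstacle; the only delicate point is the \emph{choice} of the bad function $f$. Starting from an arbitrary positive unit would fail, since a positive unit bounded away from $0$ does contain an entire ball inside $W_B^+$ (the argument of Theorem~\ref{units_1} applies verbatim in $B_1(X)$). The crux is therefore to manufacture a positive unit whose infimum is $0$, which the inversion $f = 1/h$ of an unbounded positive Baire one function accomplishes; once that point is in hand, the constant translate $g = f - \epsilon$ does the rest.
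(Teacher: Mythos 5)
Your proposal is correct and follows essentially the same route as the paper: both invert an unbounded Baire one function $h\geq 1$ to obtain a positive unit $1/h$ with infimum $0$, and then subtract a small constant to produce a function in $U(1/h,\epsilon)$ that is not a positive unit. The only cosmetic difference is that the paper subtracts the value $g(a)$ at a point where $g$ is small (creating a zero), while you subtract $\epsilon$ itself (creating a negative value); both immediately exclude the perturbed function from $W_B^+$.
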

\begin{proof}
Let $f \in B_1(X) \setminus B_1^*(X)$ be an unbounded Baire one function with $f \geq 1$ on $X$. We define $g = \frac{1}{f}$ on $X$. Since $f(x) > 0$ for all $x \in X$, by Theorem \ref{thm1.1} $g$ is a Baire one function. So $g$ is a positive unit in $B_1(X)$ which takes values arbitrarily close to 0. We show that $g$ is not an interior point of $\mathscr{U}_{B}$. Indeed, for each $\epsilon > 0$, $U(g, \epsilon) \nsubseteq \mathscr{U}_{B}$. We can select a point $a \in X$, such that $0<g(a)< \epsilon$. Taking $h=g-g(a)$ we get that $h \in U(g, \epsilon)$ but $h(a)=0$. So, $h$ is not a unit in $B_1(X)$, i.e., $h \notin \mathscr{U}_{B}.$
\end{proof}
\noindent We put together the outcome of the above discussion in the following theorem: 
\begin{theorem}
For a topological space $X$, the following statements are equivalent:
\begin{enumerate}
	\item[(i)] $B_1(X) = B_1^*(X)$.
	\item[(ii)] $B_1(X)$ with $u_B$-topology is a topological ring.
	\item[(iii)] The set of all units in $B_1(X)$ forms an open set in $u_B$-topology.
\end{enumerate}

\end{theorem}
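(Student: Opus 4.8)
The plan is to take statement (i) as a hub and establish the full equivalence by proving that (i) implies each of (ii)--(v) and that each of (ii)--(v) implies (i). Most of the substantive work has in fact already been carried out in the theorems and corollaries immediately preceding this statement, so the proof reduces to assembling them correctly.

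For the forward implications I would first observe that if $B_1(X) = B_1^*(X)$, then every Baire one function on $X$ is bounded, and consequently the basic $u_B$-neighbourhoods $U(f,\epsilon)$ coincide exactly with the uniform norm balls $B(f,\epsilon)$. Thus under hypothesis (i) the $u_B$-topology \emph{is} the uniform norm topology on $B_1^*(X)$. Since it has already been remarked that $B_1^*(X)$ equipped with the uniform norm topology is both a topological ring and a topological vector space, the implications (i)$\Rightarrow$(ii) and (i)$\Rightarrow$(iii) are immediate. For (i)$\Rightarrow$(iv) and (i)$\Rightarrow$(v), note that under (i) the set $W_B^+$ of positive units of $B_1(X)$ coincides with the set $U_{B^*}^+$ of positive units of $B_1^*(X)$, which is open by Theorem~\ref{units_1}; likewise $W_B^-$ coincides with $U_{B^*}^-$, open by its corollary.

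For the reverse implications I would invoke the contrapositives that have just been proved. The theorem asserting that $B_1(X) \neq B_1^*(X)$ forces $B_1(X)$ to be neither a topological ring nor a topological vector space yields at once (ii)$\Rightarrow$(i) and (iii)$\Rightarrow$(i). Similarly, the theorem that $W_B^+$ fails to be open whenever $B_1(X) \neq B_1^*(X)$ gives (iv)$\Rightarrow$(i), and its corollary for $W_B^-$ gives (v)$\Rightarrow$(i). Combining the two directions, each of (ii)--(v) is equivalent to (i), hence all five statements are equivalent.

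There is no serious obstacle in this argument, since it is largely bookkeeping of earlier results; the only point demanding genuine care is the clean identification, under hypothesis (i), of the $u_B$-topology with the uniform norm topology on $B_1^*(X)$. Every subsequent transfer of the ring, vector-space, and openness properties depends on that identification, so I would state it precisely before reusing Theorem~\ref{units_1} and the preceding theorems.
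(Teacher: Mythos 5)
Your proposal is correct and follows essentially the same route as the paper, which explicitly presents this theorem as a summary ``putting together the outcome of the above discussion'': the reverse implications are the contrapositives of the two preceding theorems and the corollary on $W_B^-$, and the forward implications follow from identifying the $u_B$-topology with the uniform norm topology when every Baire one function is bounded, then citing Theorem~\ref{units_1} and the topological ring/vector space properties of $B_1^*(X)$. Nothing further is needed.
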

\begin{corollary}
If a Completely Hausdorff space $X$ is not totally disconnected then $B_1(X)$ with $u_B$-topology is not a topological ring. Moreover, the collection of all units does not form an open set in $B_1(X)$.
\end{corollary}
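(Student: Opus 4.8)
The plan is to deduce this corollary directly from the preceding equivalence theorem, so that the only substantive task is to verify that its hypotheses force $B_1(X) \neq B_1^*(X)$. Once that strict inequality is in hand, each of the statements (ii)--(v) of the equivalence theorem must fail, and those failures are precisely the three assertions of the corollary.

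To produce the strict inclusion, I would invoke the result of \cite{AA} recalled in the introduction: for a completely Hausdorff space $X$, total disconnectedness is a \emph{necessary} condition for $B_1(X) = B_1^*(X)$. Read contrapositively, if $X$ is completely Hausdorff and fails to be totally disconnected, then $B_1(X) \neq B_1^*(X)$. This negates statement (i), and it is the only place where the completely Hausdorff hypothesis is used, being needed exactly so that the cited necessary-condition result applies.

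With $B_1(X) \neq B_1^*(X)$ established, I would feed this into the equivalence theorem. Since (i) is false, statements (ii), (iii), (iv) and (v) are all false; restated, $B_1(X)$ under the $u_B$-topology is neither a topological ring nor a topological vector space, and neither the collection of positive units nor the collection of negative units is open. I do not anticipate any genuine obstacle here: the analytic content has already been packaged into the two cited results, and the corollary is a clean logical composition of them. The only point demanding care is the direction of the implication in \cite{AA} --- it supplies total disconnectedness as \emph{necessary} and not sufficient, so it is the contrapositive that is the relevant form.
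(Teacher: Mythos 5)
Your proposal is correct and matches the paper's own argument: the paper's proof is simply ``Follows from Theorem 3.4 of \cite{AA}'', i.e.\ the contrapositive of the necessity of total disconnectedness gives $B_1(X)\neq B_1^*(X)$, and the preceding equivalence theorem then negates statements (ii)--(v). You have only spelled out the same two-step composition in more detail.
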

\begin{proof}
Follows from Theorem 3.4  of \cite{AA}.
\end{proof}
\noindent To overcome the inadequacy of $u_B$-topology, we define another topology on $B_1(X)$ as follows:
Define $M(g,u)= \{f \in B_1(X) \ : \ |f(x)-g(x)| \leq u(x)$, for every $x \in X\}$ and $\widetilde{M}(g,u)= \{f \in B_1(X): |f(x)-g(x)| < u(x)$, for every $x \in X\}$, where $u$ is any positive unit in $B_1(X)$. It is not hard to check that the collection $\mathscr B = \{\widetilde{M}(g,u) \ : \ g \in B_1(X) \text{ and } u \text{ is any positive unit in } B_1(X)\}$ is an open base for some topology on $B_1(X)$.
\begin{theorem}
	$\mathscr{B}$ forms an open base for some topology on $B_1(X)$.	
\end{theorem}
\begin{proof}
	(i) $\mathscr{B}$ covers $B_1(X)$ follows from the construction of $\mathscr{B}$.\\
	(ii) Let $\widetilde{M}(g_1,u_1) \cap \widetilde{M}(g_2,u_2) \neq \emptyset$ and $f \in \widetilde{M}(g_1,u_1) \cap \widetilde{M}(g_2,u_2)$.  Since $f \in \widetilde{M}(g_1,u_1)$, it follows that, for all $x \in X$, $|f(x)-g(x)| < u_1(x)$.\\
	Set $u_1^*(x)=u_1(x)-|f(x)-g_1(x)|$, for all $x \in X$. Then $u_1^* \in B_1(X)$ with $u_1^*(x) >0$, for all $x \in X$. So, $u_1^*$ is a positive unit in $B_1(X)$.\\
	We claim that, $\widetilde{M}(f,u_1^*) \subseteq \widetilde{M}(g_1,u_1)$. \\
	Choose $h \in \widetilde{M}(f,u_1^*)$, which implies that $|h(x)-f(x)| < u_1^*(x), \forall x \in X.$\\
	Now, $|h(x)-g_1(x)| \leq |h(x)-f(x)|+|f(x)-g_1(x)| < u_1^*(x)+u_1(x)-u_1^*(x)=u_1(x).$ So, $h \in \widetilde{M}(g_1,u_1)$.\\
	Analogously, $\widetilde{M}(f,u_2^*) \subseteq \widetilde{M}(g_2,u_2)$, where $u_2^*$ is a positive unit in $B_1(X)$ .\\
	Let $u^*=\min \{u_1^*, u_2^*\}$. Then $u^*$ is a positive unit in $B_1(X)$ and $\widetilde{M}(f,u^*) \subseteq \widetilde{M}(g_1,u_1) \cap \widetilde{M}(g_2,u_2)$. Therefore, $\mathscr{B}$ is an open base for some topology on $B_1(X)$.	
\end{proof}
\begin{definition}
	The topology on $B_1(X)$ for which $\mathscr{B}$ forms a base for open sets is called the \index{$m_B$-topology}$m_B$-topology.
\end{definition}
\begin{remark}
	\textnormal{It is immediate that, for any $g \in B_1(X),$ the collection $\{M(g,u): u$ is a positive unit in $B_1(X)\}$ is a neighbourhood base about $g$ in the $m_B$-topology.}
\end{remark}
\noindent The following theorems show that $B_1(X)$ with $m_B$-topology overcomes the drawbacks of the $u_B$-topology.

\begin{theorem}
$B_1(X)$ with $m_B$-topology is a topological ring.
\end{theorem}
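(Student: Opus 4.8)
The plan is to verify the three defining conditions of a topological ring directly against the basic open sets $\widetilde{M}(g,u)$: that addition $B_1(X)\times B_1(X)\to B_1(X)$ is continuous, that negation is continuous, and that multiplication is continuous. Since $(B_1(X),u_B)$ was already shown to be a topological group, the additive part is the easy warm-up and the real content lies in multiplication. Throughout, the key structural fact I would exploit is that the "radius" $u$ in $\widetilde{M}(g,u)$ is itself a \emph{positive unit} in $B_1(X)$, so it may vary pointwise and, crucially, be taken small exactly where the functions involved are large. This is precisely the flexibility that the $u_B$-topology lacked.

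\begin{proof}[Proof sketch]
First I would check continuity of addition at an arbitrary pair $(f_1,f_2)$. Given a basic neighbourhood $\widetilde{M}(f_1+f_2,u)$ of the sum, I would claim that $\widetilde{M}(f_1,\tfrac{u}{2})\times\widetilde{M}(f_2,\tfrac{u}{2})$ maps into it; here $\tfrac{u}{2}$ is again a positive unit, and the pointwise triangle inequality $|(h_1+h_2)-(f_1+f_2)|\le|h_1-f_1|+|h_2-f_2|<u$ closes the argument. Continuity of $f\mapsto -f$ is immediate since $\widetilde{M}(-g,u)$ is exactly the image of $\widetilde{M}(g,u)$ under negation.

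\noindent The substantial step is multiplication. Fix $(f_1,f_2)$ and a basic neighbourhood $\widetilde{M}(f_1 f_2,u)$ of the product. For $h_i\in\widetilde{M}(f_i,u_i)$ I would use the identity
\[
h_1 h_2 - f_1 f_2 = (h_1-f_1)f_2 + f_1(h_2-f_2) + (h_1-f_1)(h_2-f_2),
\]
so that pointwise $|h_1h_2-f_1f_2|\le u_1|f_2| + |f_1|u_2 + u_1 u_2$. The task is then to choose positive units $u_1,u_2$ making the right-hand side strictly less than $u$ at \emph{every} point of $X$. The obstacle, and the reason $u_B$ fails but $m_B$ succeeds, is that $f_1,f_2$ may be unbounded, so no constant choice of radius can dominate the terms $u_1|f_2|$ and $|f_1|u_2$; one must let the radii shrink where $f_1,f_2$ grow. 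The natural remedy is to set, for instance,
\[
u_1 = u_2 = \frac{u}{1+|f_1|+|f_2|},
\]
which is a genuine positive unit in $B_1(X)$: the denominator is a strictly positive Baire one function (a sum of $|f_i|\in B_1(X)$ and the constant $1$), hence by the earlier units theorem its reciprocal lies in $B_1(X)$, and multiplying by the positive unit $u$ keeps it a positive unit. With this choice each of the three summands is bounded above by a fixed fraction of $u$, and their sum stays below $u$ pointwise, so $h_1 h_2\in\widetilde{M}(f_1 f_2,u)$.

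\noindent The main thing I expect to require care is verifying that the chosen radius is legitimately a positive unit and that the pointwise estimates on the three summands genuinely add up to something strictly less than $u$ everywhere; this is a matter of bookkeeping with the constants (e.g. replacing $u$ by $u/2$ or $u/3$ in the denominators) rather than any deeper difficulty. Once continuity of addition, negation, and multiplication is established at every point, $(B_1(X),m_B)$ is a topological ring.
\end{proof}
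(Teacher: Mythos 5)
Your overall route is the same as the paper's: addition via the triangle inequality with radius $u/2$, and multiplication via the standard decomposition of $h_1h_2-f_1f_2$ together with a radius that is a positive unit shrinking where $|f_1|+|f_2|$ grows. The one point that is not mere bookkeeping is the cross term $u_1u_2$. With $u_1=u_2=\frac{u}{1+|f_1|+|f_2|}$ you get the pointwise bound $u_1|f_2|+|f_1|u_2+u_1u_2\le u\cdot\frac{|f_1|+|f_2|}{1+|f_1|+|f_2|}+\frac{u^2}{(1+|f_1|+|f_2|)^2}$, and the second summand can exceed $u$ wherever $u$ is large and $f_1,f_2$ are small (e.g.\ at a point with $f_1=f_2=0$ and $u=100$ the bound is $10^4$). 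Since $u$ is an arbitrary positive unit it need not be bounded above, so your proposed remedy of replacing $u$ by $u/2$ or $u/3$ in the numerator fails for the same reason: no constant factor $c$ makes $u/c+u^2/c^2<u$ hold pointwise when $u$ is unbounded. The quadratic term must be tamed by also capping the radius, i.e.\ taking $v=\bigl(\tfrac12\tfrac{u}{|f_1|+|f_2|+3}\bigr)\wedge 1$ so that $v^2\le v$, which is exactly what the paper does (and one should note that the minimum of two positive units is again a positive unit). With that single correction your argument goes through and coincides with the paper's; your explicit verification that the radius is a legitimate positive unit, and of continuity of negation, are fine.
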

\begin{proof}
We shall show that both addition and multiplication operations are continuous in $m_B$-topology. For any $f, g \in B_1(X)$ we consider a neighbourhood $M(f+g,u)$ at the point $f+g$, where $u$ is a positive unit in $B_1(X)$. Then it is clear that, $M(f, \dfrac{u}{2}) + M(g, \dfrac{u}{2}) \subseteq M(f+g,u)$. So, addition is a continuous map.\\
To show multiplication is continuous at (f, g), we select a positive unit $u$ of $B_1(X)$. We want to produce a positive unit $v$ of $B_1(X)$ with $v \leq 1$ such that $M(f,v).M(g,v) \subseteq M(fg,u)$.	\\
If $v$ has to satisfy the above relation, then we should have, whenever, $|h_1-f| \leq v$ and $|h_2-g| \leq v$ then $|fg-h_1h_2|\leq u$.\\
Now if $|h_1-f| \leq v$ and $|h_2-g| \leq v$, then\\
$|fg-h_1h_2|\\
=|h_1(h_2-g)+g(h_1-f)|\\
=|h_1(h_2-g)+(g-h_2)(h_1-f)+h_2(h_1-f)|\\
\leq |h_1|v+v^2+|h_2|v \\
\leq v[|h_1|+1+|h_2|] \\
\leq v[v+|f|+1+v+|g|] \\
\leq v[|f|+|g|+3]$\\
Therefore, the unit $v$ defined as $v=\bigg(\dfrac{u}{2(|f|+|g|+3)}\bigg) \wedge 1$ will serve our purpose. Hence, $B_1(X)$ with $m_B$-topology is a topological ring.
\end{proof}
\begin{theorem}
$\mathscr{U}_{B}$, the collection of all units in $B_1(X)$ is open in the $m_B$-topology. 
\end{theorem}
\begin{proof}
	Let $u \in \mathscr{U}_{B}$. If we show that $M(u, \frac{|u|}{2}) \subseteq \mathscr{U}_{B}$, then $u$ is an interior point of $\mathscr{U}_{B}$. Indeed, for any $v \in M(u, \frac{|u|}{2}) \implies |v(x)-u(x)| < \frac{|u(x)|}{2}, \forall x \in X \implies v(x) \neq 0$, for all $x \in X$. So by Theorem \ref{thm1.2}, $v \in \mathscr{U}_{B}$. This completes the proof.	
\end{proof}

\begin{theorem}
The closure $\overline{I}$ of a proper ideal $I$ of $B_1(X)$ with $m_B$-topology is also a proper ideal.
\end{theorem}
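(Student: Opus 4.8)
The plan is to imitate the proof of Theorem~\ref{closure_ideal} verbatim, replacing the ingredients from the bounded setting by their $m_B$-analogues that have just been established. Two facts do all the work: first, that $B_1(X)$ with the $m_B$-topology is a topological ring, and second, that the set $W_B^+$ of positive units is open in this topology. Since both are already proved above, the argument should go through with no new analytic input.

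First I would argue that $\overline{I}$ is an ideal. Because $B_1(X)$ is a topological ring under the $m_B$-topology, for each fixed $r \in B_1(X)$ the multiplication map $x \mapsto rx$ is continuous, so $r\,\overline{I} \subseteq \overline{rI} \subseteq \overline{I}$, and continuity of addition gives $\overline{I} + \overline{I} \subseteq \overline{I}$; hence $\overline{I}$ is an ideal. This is exactly the standard step invoked in Theorem~\ref{closure_ideal}, so I would state it briefly rather than reprove it.

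Next I would show $\overline{I}$ is proper. Since $I$ is a proper ideal it contains no unit, and in particular no positive unit, so $W_B^+ \cap I = \emptyset$, which gives $I \subseteq B_1(X) \setminus W_B^+$. As $W_B^+$ is open in the $m_B$-topology, the set $B_1(X) \setminus W_B^+$ is closed and contains $I$, so it must contain $\overline{I}$ as well; that is, $\overline{I} \subseteq B_1(X) \setminus W_B^+$, or equivalently $\overline{I} \cap W_B^+ = \emptyset$. Since the constant function $1$ is a positive unit, $1 \in W_B^+$ and therefore $1 \notin \overline{I}$, proving that $\overline{I}$ is a proper ideal.

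I do not anticipate a genuine obstacle here, precisely because the openness of $W_B^+$ and the topological-ring structure have been secured in advance; the one point worth being slightly careful about is the closure-of-an-ideal step, where one must invoke the joint continuity of the ring operations (not merely separate continuity) to conclude that $\overline{I}$ is closed under both operations. If a referee wanted more detail, I would spell out the $r\,\overline{I} \subseteq \overline{I}$ and $\overline{I} + \overline{I} \subseteq \overline{I}$ inclusions using nets or basic neighborhoods $\widetilde{M}(g,u)$, but for the present exposition the parallel with Theorem~\ref{closure_ideal} makes a terse treatment appropriate.
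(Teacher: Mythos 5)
Your proposal is correct and follows exactly the route the paper takes: the paper's proof likewise cites the topological-ring structure to get that $\overline{I}$ is an ideal and then says "since $W_B^+$ is an open set, proceeding as in Theorem~\ref{closure_ideal} we obtain the result," which is precisely the complementation argument you spell out. You have merely written out the details that the paper leaves to the reader.
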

\begin{proof}
$B_1(X)$ with $m_B$-topology being a topological ring, $\overline{I}$ is an ideal of $B_1(X)$. It is enough to show that $\overline{I}$ is a proper ideal of $B_1(X)$. Since $\mathscr{U}_{B}$ is an open set, proceeding as in Theorem~\ref{closure_ideal} we obtain the result.
\end{proof}

\begin{theorem}
$B_1^*(X)$ is a closed subset of $B_1(X)$ with $m_B$-topology.
\end{theorem}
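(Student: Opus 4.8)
The plan is to prove the equivalent statement that the complement $B_1(X)\setminus B_1^*(X)$ is open in the $m_B$-topology, from which it follows immediately that $B_1^*(X)$ is closed. Accordingly I would fix an arbitrary unbounded function $f\in B_1(X)\setminus B_1^*(X)$ and try to exhibit a basic $m_B$-neighbourhood of $f$ lying entirely inside the complement, i.e. a basic open set around $f$ all of whose members are again unbounded.

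The key observation is that a basic open set $\widetilde{M}(f,u)$ is determined by a positive unit $u$, and nothing forces $u$ to be unbounded; in particular I am free to take $u$ to be the constant function $1$, which is plainly a positive unit of $B_1(X)$. I would then examine
$\widetilde{M}(f,1)=\{g\in B_1(X)\ :\ |f(x)-g(x)|<1 \text{ for all } x\in X\}$
and check that every $g$ in it is unbounded. This is the heart of the argument: from the reverse triangle inequality $|g(x)|\geq |f(x)|-|f(x)-g(x)|>|f(x)|-1$ for all $x$, combined with $\sup_{x\in X}|f(x)|=\infty$ (since $f$ is unbounded), one gets $\sup_{x\in X}|g(x)|=\infty$, so $g\notin B_1^*(X)$. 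Hence $\widetilde{M}(f,1)\subseteq B_1(X)\setminus B_1^*(X)$.

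Since $f$ was an arbitrary point of the complement, every such point is interior, so $B_1(X)\setminus B_1^*(X)$ is $m_B$-open and therefore $B_1^*(X)$ is $m_B$-closed.

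I do not expect any genuine obstacle here. The whole content of the proof is that the $m_B$-topology, despite being built from variable ``radii'' $u$, still admits neighbourhoods of bounded radius (just choose $u$ bounded), and a uniformly bounded perturbation can never turn an unbounded function into a bounded one. The only point needing a word of care is to confirm that the chosen $u$ is a bona fide positive unit in $B_1(X)$ so that $\widetilde{M}(f,u)$ is a legitimate member of the base $\mathscr B$; for the constant function $1$ this is immediate.
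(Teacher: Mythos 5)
Your proposal is correct and is essentially the paper's own argument: the paper also takes the neighbourhood $M(f,1)$ of an unbounded $f$ and observes it lies entirely in $B_1(X)\setminus B_1^*(X)$. You have merely spelled out the reverse-triangle-inequality step that the paper leaves implicit.
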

\begin{proof}
Let $f \in B_1(X) \setminus B_1^*(X)$. Then $M(f,1) \subseteq B_1(X) \setminus B_1^*(X)$ implies that $f$ is not a limit point of $B_1^*(X)$ and this proves the desired result.
\end{proof}
\begin{theorem} \label{5.9}
The uniform norm topology on $B_1^*(X)$ is weaker than the relative $m_B$-topology on $B_1^*(X)$.
\end{theorem}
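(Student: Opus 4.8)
The plan is to prove the stated comparison by showing that the uniform norm topology is contained in the relative $m_B$-topology, since this containment of open-set collections is exactly what ``weaker'' means here. Both topologies are determined by their neighbourhood bases at each point, so it suffices to check that for every $f \in B_1^*(X)$ and every basic uniform-norm neighbourhood $B(f,\epsilon)$ of $f$, there is a basic neighbourhood of $f$ in the relative $m_B$-topology that sits inside $B(f,\epsilon)$.

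The key idea is that positive constant functions supply admissible ``radii'' in the $m_B$-base $\mathscr B$. Given $\epsilon > 0$, the constant function $\epsilon$ is a positive unit of $B_1(X)$: its reciprocal $\frac{1}{\epsilon}$ is again a constant, hence trivially a Baire one function, and its zero set is empty. Consequently $\widetilde{M}(f,\epsilon) = \{g \in B_1(X) : |f(x)-g(x)| < \epsilon \text{ for all } x \in X\}$ is a genuine member of $\mathscr B$, and so $\widetilde{M}(f,\epsilon)\cap B_1^*(X)$ is a basic neighbourhood of $f$ in the relative $m_B$-topology.

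First I would verify the containment $\widetilde{M}(f,\epsilon)\cap B_1^*(X) \subseteq B(f,\epsilon)$: if $g \in B_1^*(X)$ satisfies $|f(x)-g(x)| < \epsilon$ for every $x \in X$, then $\|f-g\| = \sup_{x \in X} |f(x)-g(x)| \leq \epsilon$, whence $g \in B(f,\epsilon)$. This exhibits, inside any uniform-norm ball about $f$, a relative $m_B$-neighbourhood about $f$, which yields the desired inclusion of topologies and hence the theorem.

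There is no serious obstacle here; the only points to watch are bookkeeping ones. One must confirm that constant functions really are positive units of $B_1(X)$, so that they are legitimate choices of $u$ in the base $\mathscr B$, and one must respect the passage from the strict pointwise inequality defining $\widetilde{M}(f,\epsilon)$ to the non-strict sup-norm inequality defining the closed ball $B(f,\epsilon)$ — the strict bound $<\epsilon$ at each point yields only $\sup \leq \epsilon$, which is precisely what the closed ball $B(f,\epsilon)$ requires. I would \emph{emphasise} that this argument uses only constant radii; the reverse inclusion, which would force the two topologies to coincide, genuinely requires non-constant units and fails unless $B_1(X) = B_1^*(X)$, and that is presumably the substantive content addressed by the characterisation to follow.
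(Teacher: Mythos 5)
Your proof is correct and follows essentially the same route as the paper: both arguments rest on the observation that the constant function $\epsilon$ is a positive unit of $B_1(X)$, so that the uniform-norm ball $B(f,\epsilon)$ contains (indeed, in the paper's version, equals) the trace on $B_1^*(X)$ of an $m_B$-neighbourhood of $f$. Your extra care with the strict versus non-strict inequality between $\widetilde{M}(f,\epsilon)$ and the closed ball is sound but does not change the substance.
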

\begin{proof}
For any $f \in B_1^*(X)$ and $\epsilon > 0$ we get $B(f,\epsilon) =\{g \in B_1^*(X): |f(x)-g(x)| \leq \epsilon\} = M(f, \epsilon) \cap B_1^*(X)$ and this completes the proof.
\end{proof}
\begin{theorem}
Each maximal ideal in $B_1^*(X)$ is closed in $B_1(X)$ with $m_B$-topology.
\end{theorem}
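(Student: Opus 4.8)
The plan is to reduce the statement to two facts already established in this section, namely that each maximal ideal of $B_1^*(X)$ is closed in the uniform norm topology (the corollary following Theorem~\ref{closure_ideal}) and that $B_1^*(X)$ is itself closed in $B_1(X)$ with the $m_B$-topology. The bridge between them is Theorem~\ref{5.9}, comparing the two topologies on $B_1^*(X)$. The whole argument is a two-step ``transitivity of closedness'' composition: first close up $M$ inside the subspace $B_1^*(X)$, then transfer that closedness to the ambient space $B_1(X)$.

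Let $M$ be a maximal ideal of $B_1^*(X)$. First I would work entirely inside the subspace $B_1^*(X)$. By Theorem~\ref{5.9} the uniform norm topology on $B_1^*(X)$ is weaker than the relative $m_B$-topology; since a coarser topology has fewer closed sets, every set closed in the uniform norm topology is also closed in the relative $m_B$-topology. By the corollary to Theorem~\ref{closure_ideal}, $M$ is closed in the uniform norm topology, so $M$ is closed in the relative $m_B$-topology on $B_1^*(X)$.

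Next I would pass from the subspace to the whole space. Being closed in the subspace $B_1^*(X)$ means $M = F \cap B_1^*(X)$ for some $m_B$-closed set $F$ in $B_1(X)$. Because $B_1^*(X)$ is closed in $(B_1(X), m_B)$, the intersection of the two $m_B$-closed sets $F$ and $B_1^*(X)$ is again $m_B$-closed, whence $M$ is closed in $B_1(X)$. Equivalently, one may phrase the argument through closures: since $B_1^*(X)$ is $m_B$-closed and $M \subseteq B_1^*(X)$, the $m_B$-closure of $M$ already lies inside $B_1^*(X)$ and therefore coincides with the closure of $M$ taken in the relative $m_B$-topology, which equals $M$ by the previous paragraph.

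I do not expect a serious obstacle here, since every ingredient is in place; the only points requiring care are keeping straight which of the two topologies is the finer one on $B_1^*(X)$, and verifying the elementary topological fact that a set closed in a closed subspace is closed in the ambient space. Both are routine, so the proof should be short.
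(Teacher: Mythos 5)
Your argument is correct and is essentially identical to the paper's proof: both pass from closedness of $M$ in the uniform norm topology to closedness in the relative $m_B$-topology via Theorem~\ref{5.9}, and then use that $B_1^*(X)$ is $m_B$-closed in $B_1(X)$ to conclude. You merely spell out the two routine topological facts in more detail than the paper does.
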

\begin{proof}
Each maximal ideal $M$ in $B_1^*(X)$ is closed in $B_1^*(X)$ with respect to relative $m_B$-topology, since the uniform norm topology on $B_1^*(X)$ is weaker than the relative $m_B$-topology on $B_1^*(X)$ and $M$ is closed under uniform norm topology. Also $B_1^*(X)$ is closed in $B_1(X)$ with $m_B$-topology. Hence $M$ is closed in $B_1(X)$.
\end{proof}
\begin{theorem}
For a normal ($T_4$) topological space $X$, every closed ideal $I$ in $B_1(X)$ with $m_B$-topology is a $Z_B$-ideal.
\end{theorem}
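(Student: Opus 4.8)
The plan is to prove the statement directly from the definition: $I$ is a $Z_B$-ideal precisely when $Z_B^{-1}[Z_B[I]] = I$, so, as the inclusion $I \subseteq Z_B^{-1}[Z_B[I]]$ is automatic, it suffices to show that $g \in I$ whenever $Z(g) \in Z_B[I]$. Writing $Z(g) = Z(f_0)$ with $f_0 \in I$ and replacing $f_0$ by $f := f_0 g \in I$ (for which $Z(f) = Z(f_0) \cup Z(g) = Z(g)$), the task becomes: if $f \in I$ and $Z(f) \subseteq Z(g)$, then $g \in I$. Since $I$ is $m_B$-closed we have $I = \overline{I}$, so it is enough to show $g \in \overline{I}$, i.e. that for each positive unit $u$ the basic neighbourhood $\widetilde{M}(g,u)$ contains a point of $I$.

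This is the Baire one analogue of the classical fact that $m$-closed ideals of $C(X)$ are $z$-ideals, and I would follow the same truncation strategy. Fix a positive unit $u$ and let $\phi$ be the truncation of $g$ at height $\tfrac{u}{2}$, namely $\phi = (g - \tfrac{u}{2})\vee 0 + (g + \tfrac{u}{2}) \wedge 0$, so that $\phi$ is Baire one, $\phi = 0$ on $\{|g| \le \tfrac{u}{2}\}$, and $|g - \phi| \le \tfrac{u}{2} < u$ everywhere; hence $\phi \in \widetilde{M}(g,u)$. It then remains only to check $\phi \in I$, and for this I would exhibit $\phi$ as a multiple $\phi = f\cdot k$ of $f$ by some $k \in B_1(X)$, whence $\phi \in I$ because $I$ is an ideal.

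The heart of the argument, and the step I expect to be the real obstacle, is verifying that the natural candidate $k$ --- defined by $k = \phi / f$ wherever $\phi \neq 0$ and $k = 0$ on $Z(\phi)$ --- is genuinely a Baire one function, since $f$ is merely non-vanishing (not bounded away from $0$) on the support of $\phi$ and may change sign. Here I would invoke normality through the Pasting Lemma (Theorem~\ref{lem0.5}). Set $P = \{x : |g(x)| \le \tfrac{u}{2}\}$ and $Q = \{x : |g(x)| \ge \tfrac{u}{2}\}$; both are preimages of closed half-lines under the Baire one function $|g| - \tfrac{u}{2}$, hence $G_\delta$ sets by Theorem~\ref{P1 thm_4.1}(i), and $P \cup Q = X$. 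On $P$ we have $k \equiv 0$, which is Baire one; on $Q$ we have $|g| > 0$, so $Z(f) \subseteq Z(g)$ forces $f \neq 0$ throughout $Q$, and writing $k = \phi\, f\,(1/f^2)$ exhibits $k|_Q$ as a product of Baire one functions, the factor $1/f^2$ being Baire one by the result that the reciprocal of an everywhere-positive Baire one function is again Baire one (applied to $f^2 > 0$ on $Q$). The two definitions agree on $P \cap Q = \{|g| = \tfrac{u}{2}\}$, where $\phi = 0$, so the Pasting Lemma yields $k \in B_1(X)$.

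Finally, $\phi = fk \in I$ gives $\phi \in I \cap \widetilde{M}(g,u)$, so every basic neighbourhood of $g$ meets $I$; therefore $g \in \overline{I} = I$, and $I$ is a $Z_B$-ideal. Normality enters only through the Pasting Lemma (and the ambient theory of Baire one functions on $X$); the construction of $\phi$ and the estimate $|g-\phi| < u$ are routine, so essentially all the work is concentrated in the pasting step, where the possibly sign-changing, non-bounded-away-from-zero denominator $f$ must be tamed via the identity $1/f = f/f^2$ on $Q$.
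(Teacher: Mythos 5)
Your proof is correct and follows essentially the same route as the paper's: truncate the target function at a multiple of the positive unit $u$, write the truncation as (ideal element)$\times$(Baire one function) by pasting a quotient over the two $G_\delta$ pieces $\{|g|\le \tfrac{u}{2}\}$ and $\{|g|\ge \tfrac{u}{2}\}$ via normality, and conclude that the target lies in $\overline{I}=I$. If anything, your $1/f = f/f^{2}$ device justifies the Baire-one-ness of the quotient on the second piece more explicitly than the paper, which asserts it without comment for a possibly sign-changing denominator.
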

\begin{proof}
Suppose $f,g \in B_1(X)$ with $g \in I$ and $Z(f)=Z(g)$. It is enough to show that $f \in I$. Since $I$ is closed, we show that for any positive unit $u$ in $B_1(X)$, $M(f, u) \bigcap I \ne \emptyset$. Consider the map $h: X \mapsto \mathbb{R}$ given by 

	\[ h(x)= \begin{cases} 
	0 &$ if $|f(x)| \leq u(x)$$ \\
	
	\frac{f(x)-u(x)}{g(x)} &$ if  $f(x) \geq u(x)$$\\
		\frac{f(x)+u(x)}{g(x)} &$ if  $f(x) \leq -u(x)$$

	\end{cases}
	\]
$h$ is well defined. Observe that the collections $\{x \in X : \ |f(x)| \leq u(x)\}, \{x \in X :f(x) \geq u(x)\}$, $\{x \in X :f(x) \leq -u(x)\}$ are $G_\delta$ sets in $X$ and $0, \frac{f(x)-u(x)}{g(x)}, 	\frac{f(x)+u(x)}{g(x)}$ are Baire one functions on their respective domains. So by Theorem \ref{lem0.5},  $h$ is a Baire one function on $X$.\\
For all $x \in X, |h(x)g(x)-f(x)| \leq u(x)$ implies $hg \in M(f,u)$. But $g \in I$ implies $hg \in I$. Therefore $hg \in M(f,u)\bigcap I$ and $f$ is a limit point of $I$. This completes the proof.
\end{proof}
\noindent In Theorem~\ref{5.9} we have seen that the uniform norm topology is weaker than the relative $m_B$-topology on $B_1^*(X)$. The following theorem shows that it is always strictly weaker unless $B_1(X) = B_1^*(X)$. In other words, the equality of uniform norm topology with relative $m_B$-topology on $B_1^*(X)$ characterizes $B_1(X) = B_1^*(X)$.
\begin{theorem} \label{thm2.18}
The uniform norm topology on $B_1^*(X)$ is same as the relative $m_B$-topology on $B_1^*(X)$ if and only if $B_1(X)=B_1^*(X)$.
\end{theorem}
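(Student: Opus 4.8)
The plan is to prove the two implications separately, using Theorem~\ref{5.9}, which already supplies the inclusion of the uniform norm topology in the relative $m_B$-topology; so in each direction only the reverse inclusion (or its failure) needs attention.

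For the forward implication, assume $B_1(X)=B_1^*(X)$ and show that every basic relative $m_B$-open set is open in the uniform norm topology. Fix such a set $\widetilde{M}(g,u)\cap B_1^*(X)$ and an arbitrary point $f$ in it, so that $|f(x)-g(x)|<u(x)$ for every $x\in X$. The idea is to measure the pointwise slack by $s=u-|f-g|$, which is a strictly positive Baire one function on $X$. By the criterion that the reciprocal of a strictly positive Baire one function is again Baire one (Theorem~1.2), $\frac{1}{s}\in B_1(X)=B_1^*(X)$ is bounded; hence $s$ is bounded away from $0$, say $s\geq\delta>0$. A routine triangle inequality then gives $B(f,\frac{\delta}{2})\cap B_1^*(X)\subseteq\widetilde{M}(g,u)$, so $f$ is a uniform-norm interior point. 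This yields the reverse inclusion and, with Theorem~\ref{5.9}, equality of the two topologies. I expect the step forcing $s$ to be bounded away from $0$ to be the crux: it is exactly here that the hypothesis $B_1(X)=B_1^*(X)$ is indispensable, since without it $s$ may have infimum $0$ and no single uniform ball will fit.

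For the converse I would argue contrapositively: assuming $B_1(X)\neq B_1^*(X)$, I would exhibit one relative $m_B$-open set that is not uniform-norm open. Choose an unbounded $f\in B_1(X)\setminus B_1^*(X)$ with $f\geq 1$ (replacing $f$ by $|f|+1$ if necessary) and put $u=\frac{1}{f}$, which is a positive unit of $B_1(X)$ whose values have infimum $0$. Then $\mathbf{0}\in\widetilde{M}(\mathbf{0},u)\cap B_1^*(X)$, yet for every $\epsilon>0$ the constant function $\frac{\epsilon}{2}$ lies in $B(\mathbf{0},\epsilon)$ but escapes $\widetilde{M}(\mathbf{0},u)$, because $u(x)<\frac{\epsilon}{2}$ at some $x\in X$. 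Hence no uniform ball about $\mathbf{0}$ is contained in this $m_B$-neighbourhood, the relative $m_B$-topology is strictly finer, and the two topologies differ. Together the two implications give the stated equivalence.
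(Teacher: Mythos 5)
Your proof is correct, and both implications go through. In the forward direction your argument is essentially the paper's: the paper observes that when $B_1(X)=B_1^*(X)$ every positive unit $u$ is a unit of $B_1^*(X)$, hence bounded away from $0$ by some $\epsilon$, so $B(g,\epsilon)\subseteq M(g,u)$; you do the same thing one step more carefully, applying it not to the centre $g$ but to an arbitrary point $f$ of the basic set via the slack function $s=u-|f-g|$, whose reciprocal is Baire one and therefore bounded under the hypothesis. That extra care is not strictly needed (the $\widetilde{M}(g,u)$ with $g$ fixed already form a neighbourhood base at $g$), but it is harmless and makes the openness claim self-contained. In the converse the two arguments genuinely diverge while using the same witness, a positive unit $u$ with $\inf u=0$ manufactured from an unbounded function. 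The paper shows that $M(r,u)\cap\mathbb{R}=\{r\}$, so the constants form a discrete subspace, scalar multiplication fails to be continuous, and the relative $m_B$-topology cannot coincide with the uniform norm topology because the latter makes $B_1^*(X)$ a topological vector space. You instead exhibit a single relative $m_B$-open set, $\widetilde{M}(\mathbf{0},u)\cap B_1^*(X)$, that contains $\mathbf{0}$ but no uniform ball $B(\mathbf{0},\epsilon)$, since the constant $\frac{\epsilon}{2}$ lies in the ball yet violates $|h|<u$ wherever $u<\frac{\epsilon}{2}$. Your route is more elementary and direct, separating the topologies by hand; the paper's route is less direct but yields the extra structural information that $(B_1^*(X),\text{relative }m_B)$ fails to be a topological vector space when $B_1(X)\neq B_1^*(X)$.
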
 
\begin{proof}
If $B_1(X)=B_1^*(X)$ then for any $g \in B_1(X)$ and any positive unit $u$ in $B_1(X)$ we get $M(g,u) \cap B_1^*(X)=M(g,u)=\{f \in B_1^*(X): |f(x)-g(x)| \leq u(x)$, for every $x \in X\}$. $u$ is a unit in $B_1(X)$ $\big(=B_1^*(X)\big)$ implies that $u$ is bounded away from $0$. So $u(x) \geq \epsilon$, for all $x \in X$ and for some $\epsilon >0$. Hence $g \in B(g, \epsilon)\big(=M(g,\epsilon)\big) \subseteq M(g,u)$. Therefore $M(g,u)$ is a neighbourhood of $g$ in $B_1^*(X)$ in the uniform norm topology. Hence this result along with Theorem \ref{5.9} implies that uniform norm topology on $B_1^*(X)=$ the relative $m_B$-topology on $B_1^*(X)$.\\
For the converse, let $B_1(X) \neq B_1^*(X)$. Then there exists $f \in B_1(X)$ such that $Z(f) = \emptyset$, $f(x)>0$ for all $x \in X$ and $f$ takes arbitrarily small values near $0$. So $f$ is a positive unit in $B_1(X)$. Now, for any two real numbers $r$, $s$, it will never happen that $|\mathbf{r}-\mathbf{s}| \leq f$. This means that $M(\mathbf{r},f) \cap \{\mathbf{t} \ : \ t \in \mathbb{R}\}=\{\mathbf{r}\}$. So the set $\{\mathbf{r} \ : \ r \in \mathbb{R}\}$ is a discrete subspace of $B_1^*(X)$ in the relative $m_B$-topology. From this it follows that, the scalar multiplication operation $\psi: \mathbb{R} \times B_1^*(X) \rightarrow B_1^*(X)$, defined by $\psi (\alpha, g)=\boldsymbol{\alpha}.g$ is not continuous at $(r,\textbf{s})$, where $r,s \in \mathbb{R}$. Thus $B_1^*(X)$ with relative $m_B$-topology is not a topological vector space and hence the relative $m_B$-topology is not same as uniform norm topology on $B_1^*(X)$.
\end{proof}

\begin{theorem}
$B_1(X)$ with $m_B$-topology is first countable if and only if $B_1(X)=B_1^*(X)$. 
\end{theorem}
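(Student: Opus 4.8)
The plan is to use that $(B_1(X),m_B)$ is a topological group under addition (indeed a topological ring, as already established), so that first countability is equivalent to the existence of a countable neighbourhood base at the origin $\mathbf{0}$. First I would note that the sets $\widetilde{M}(\mathbf{0},u)$, with $u$ ranging over the positive units $W_B^+$, form a neighbourhood base at $\mathbf{0}$, and then prove the comparison lemma: for positive units $u_1,u_2$ one has $\widetilde{M}(\mathbf{0},u_1)\subseteq\widetilde{M}(\mathbf{0},u_2)$ if and only if $u_1\le u_2$ pointwise. The only nontrivial implication is handled by testing with $f=\lambda u_1$ for a constant $\lambda\in[\,u_2(x_0)/u_1(x_0),1)$ chosen at a point $x_0$ where $u_1(x_0)>u_2(x_0)$: such an $f$ lies in $\widetilde{M}(\mathbf{0},u_1)$ but, since $|f(x_0)|=\lambda u_1(x_0)\ge u_2(x_0)$, not in $\widetilde{M}(\mathbf{0},u_2)$.

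Next, since $g\mapsto 1/g$ is an order-reversing bijection of $W_B^+$ onto itself (the stated result that $1/g\in B_1(X)$ whenever $g>0$ guarantees both the map and its inverse land in $B_1(X)$), I would recast the whole problem in terms of positive Baire one functions. Writing $g_n=1/u_n$, the comparison lemma shows that $\{\widetilde{M}(\mathbf{0},u_n)\}$ is a neighbourhood base at $\mathbf{0}$ exactly when, for every positive Baire one function $g$, some $n$ satisfies $g\le g_n$ pointwise, i.e.\ when the countable family $(g_n)$ is \emph{cofinal} in $(W_B^+,\le)$. Thus the theorem reduces to the assertion that a countable pointwise-cofinal family of positive Baire one functions exists if and only if $B_1(X)=B_1^*(X)$. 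For the forward direction, if $B_1(X)=B_1^*(X)$ then every positive Baire one function is bounded, so the constants $g_n\equiv n$ form a cofinal family; equivalently one may simply invoke Theorem~\ref{thm2.18} to identify $m_B$ with the metrizable, hence first countable, uniform norm topology.

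The substance is the converse, which I would prove by contraposition. Assuming $B_1(X)\neq B_1^*(X)$, pick $h\in B_1(X)\setminus B_1^*(X)$ and set $F=|h|+1$, an unbounded positive Baire one function with $F\ge 1$. Given an arbitrary sequence $(g_n)$ of positive Baire one functions, I diagonalise: choose $y_n$ with $F(y_n)>n$ (possible since $F$ is unbounded), take a strictly increasing sequence $c_n\to\infty$ with $c_n>g_n(y_n)$, let $\psi:\mathbb{R}\to\mathbb{R}$ be continuous, nondecreasing, with $\psi(n)=c_n$ and $\psi\ge c_1>0$, and set $g=\psi\circ F$. Then $g$ is a positive Baire one function and $g(y_n)\ge\psi(n)=c_n>g_n(y_n)$, so $g\nleq g_n$ for every $n$; hence $(g_n)$ is not cofinal, and no countable cofinal family can exist.

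I expect the main obstacle to be ensuring that the diagonal function $g$ is itself Baire one: a bare pointwise or piecewise prescription of large values at the points $y_n$ need not yield a Baire one function. The device that resolves this is to realise $g$ as the composite $\psi\circ F$ of a continuous $\psi$ with the given unbounded Baire one function $F$, since the composition of a continuous function with a Baire one function is again Baire one. It is worth observing that the argument uses only the ring/group structure of $(B_1(X),m_B)$ and the fact that reciprocals of strictly positive Baire one functions are Baire one, so no normality hypothesis on $X$ is required.
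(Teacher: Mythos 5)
Your proof is correct and follows essentially the same route as the paper: the forward direction via identifying $m_B$ with the (metrizable) uniform norm topology, and the converse via a diagonal construction that composes a continuous interpolating function with an unbounded Baire one function to defeat any given countable family of positive units. Your order-theoretic repackaging --- the comparison lemma $\widetilde{M}(\mathbf{0},u_1)\subseteq\widetilde{M}(\mathbf{0},u_2)\Leftrightarrow u_1\le u_2$ and the reduction to cofinal families of positive Baire one functions --- is a clean way of making explicit what the paper's argument does implicitly.
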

\begin{proof}
If $B_1(X)=B_1^*(X)$ then by Theorem \ref{thm2.18}, $B_1(X)$ with $m_B$-topology is first countable.\\
Conversely, let $B_1(X) \neq B_1^*(X)$. So there exists $f \in B_1(X) \setminus B_1^*(X)$. Then $g=f^2+1$ is a positive unit in $B_1(X)$. We can find a strictly increasing sequence $\{a_n\}$ of positive real numbers and a countable subset $\{p_n\}$ of $X$ such that $g(p_n)=a_n$, for all $n \in \mathbb{N}$.\\
Consider any countable collection of positive units in $B_1(X)$, say $\{\pi_n\}$.\\
Let $b_n=\frac{1}{2}\min \big[\pi_1(p_n), \pi_2(p_n),...,\pi_n(p_n)\big]$. Then there always exists a real valued continuous function $\sigma : \mathbb{R} \rightarrow \mathbb{R}$ such that $\sigma(x)>0$, for all $x \in \mathbb{R}^+$ and $\sigma(a_n)=b_n^{-1}$, for all $n \in \mathbb{N}.$\\
Define $\psi(x)=\frac{1}{\sigma ( g(x))}, \forall x \in X$. By Theorem \ref{thm1.1} and Theorem \ref{thm215}, $\psi$ is a positive unit in $B_1(X)$ and $\psi(p_n)=\frac{1}{\sigma(g(p_n)) }=b_n \leq \frac{1}{2}\pi_n(p_n)$, for all $n \in \mathbb{N}$.\\
Clearly $\frac{2}{3}\pi_n \in M(0,\pi_n)$ but $\frac{2}{3}\pi_n \notin M(0,\psi)$, because at each $p_n$, $\psi(p_n) \leq \frac{1}{2} \pi_n(p_n) < \frac{2}{3} \pi_n(p_n)$. So the neighbourhood $M(0, \psi)$ at $0$ contains no $M(0,\pi_n), n=1,2,3,..,$ which shows that at the point $0$ in $B_1(X)$ there is no countable neighbourhood base. \\
Hence, $B_1(X)$ with $m_B$-topology is not first countable.
\end{proof}
\begin{corollary}
$B_1(X)$ with $m_B$-topology is metrizable if and only if $B_1(X)=B_1^*(X)$.
\end{corollary}

\section{$e_B$-ideals and $e_B$-filters in $B_1^*(X)$}
\noindent For any $Z_B$-filter $\mathscr F$ on $X$, $Z_B^{-1}[\mathscr F] \bigcap B_1^*(X)$ is an ideal in $B_1^*(X)$. Although, for any ideal $I$ of $B_1^*(X)$, $Z_B[I]$ is not in general a $Z_B$-filter and it is evident from the following example:\\
Consider the ring $B_1^*(\mathbb{N})$ and the ideal $I$ as the collection of all sequences of real numbers converge to $0$. The sequence $\{\frac{1}{n}\} \in I$ but $Z(\{\frac{1}{n}\})=\emptyset$. $\emptyset \in Z_B[I]$ shows that it is not a $Z_B$-filter. \\
In this section, we introduce a special class of $Z_B$-filters, called $e_B$-filters. We locate a class of special ideals of $B_1^*(X)$, called $e_B$-ideals, which behave the same way as the $Z_B$-ideals in $B_1(X)$. The $e_B$-ideals and $e_B$-filters play the pivotal role to establish the desired correspondence.

\noindent For $f \in B_1^*(X)$ and $\epsilon > 0$, we define $E_B^\epsilon(f)= f^{-1}([-\epsilon, \epsilon])=\{x \in X : |f(x)| \leq \epsilon \}$. Every set of this form is a member of $Z(B_1(X))$, as $E_B^\epsilon(f)= Z ((|f|-\epsilon) \vee 0)$. In fact, the converse is also true, i.e., every zero set $Z(h)$, $h \in B^*(X)$ is of the form $E_B^\epsilon(|h| + \epsilon)$. For every non-empty set $I \big (\subseteq  B_1^*(X) \big )$ we define $E_B^\epsilon[I]= \{E_B^\epsilon(f): f \in I\}$ and $E_B(I)= \bigcup\limits_{\epsilon > 0}E_B^\epsilon[I]=\{E_B^\epsilon(f): f \in I, \epsilon >0\}$. For any collection of zero sets $\mathscr F$ , i.e. $\mathscr F \subseteq Z(B_1(X))$, we consider $E_{B}^{\epsilon ^{-1}} [\mathscr F]=\{f \in B_1^*(X): E_B^\epsilon(f) \in \mathscr F \}$ and define $E^{-1}_B(\mathscr F)= \bigcap\limits_{\epsilon > 0} E_{B}^{\epsilon^{-1}}[\mathscr F]=\{f\in B_1^*(X): E_B^\epsilon(f) \in \mathscr F$,  $\forall \epsilon > 0\}$.\\\\
\noindent One may easily see that for any two ideals $I$, $J$ and subcollections $\mathscr F $ and $\mathscr G$ of $Z(B_1(X))$
\begin{enumerate}
\item[(i)] $I \subseteq J \Rightarrow E_B(I) \subseteq E_B(J)$.
\item[(ii)] $\mathscr F  \subseteq \mathscr G  \Rightarrow E_B^{-1}(\mathscr F ) \subseteq E_B^{-1}(\mathscr G)$.
\end{enumerate} 
\noindent We record the following facts in the following couple of theorems:
\begin{theorem} \label{fact1}
For any subset $I$ of $B_1^*(X)$, $I \subseteq E^{-1}_B\big(E_B(I)\big) $, where $E^{-1}_B\big(E_B(I)\big)= \{f \in B_1^*(X): E^\epsilon _{B}(f) \in E_B(I)$, $\forall \epsilon > 0 \}. $
\end{theorem}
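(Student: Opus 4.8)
The plan is to prove the inclusion by directly unwinding the definitions of the operators $E_B$ and $E_B^{-1}$; no serious machinery is needed here, since this statement is exactly one of the two ``unit'' inequalities of the Galois-type correspondence being set up between subsets of $B_1^*(X)$ and subcollections of $Z(B_1(X))$. The monotonicity facts (i) and (ii) recorded just before the theorem are the structural shadow of this, but the inclusion itself is cleaner to obtain by a membership chase.

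First I would fix an arbitrary $f \in I$ and aim to show $f \in E_B^{-1}\big(E_B(I)\big)$. By the definition $E_B^{-1}(\mathscr F) = \{g \in B_1^*(X) : E_B^\epsilon(g) \in \mathscr F,\ \forall\, \epsilon > 0\}$ applied with $\mathscr F = E_B(I)$, this membership amounts precisely to verifying that $E_B^\epsilon(f) \in E_B(I)$ for every $\epsilon > 0$.

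Next, fix an arbitrary $\epsilon > 0$. Since $f \in I$, the set $E_B^\epsilon(f)$ belongs to $E_B^\epsilon[I] = \{E_B^\epsilon(g) : g \in I\}$ by definition, and clearly $E_B^\epsilon[I] \subseteq \bigcup_{\delta > 0} E_B^\delta[I] = E_B(I)$. Hence $E_B^\epsilon(f) \in E_B(I)$. As $\epsilon > 0$ was arbitrary, this yields $f \in E_B^{-1}\big(E_B(I)\big)$, and since $f \in I$ was arbitrary, $I \subseteq E_B^{-1}\big(E_B(I)\big)$.

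I do not anticipate any genuine obstacle: the only subtlety worth flagging is the interaction of the two quantifiers on $\epsilon$. The definition of $E_B^{-1}$ demands membership of $E_B^\epsilon(f)$ in $E_B(I)$ for \emph{all} $\epsilon$, whereas $E_B(I) = \bigcup_{\epsilon > 0} E_B^\epsilon[I]$ only records sets arising from \emph{some} $\epsilon$. The inclusion goes through precisely because, for each individual $\epsilon$, the required witness $E_B^\epsilon(f)$ is furnished by the \emph{same} element $f \in I$ at that same level $\epsilon$, so the ``for all $\epsilon$'' condition is met set-by-set and the argument never needs the reverse inclusion.
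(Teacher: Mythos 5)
Your proof is correct and is essentially the same as the paper's: fix $f \in I$, observe that for each $\epsilon > 0$ the set $E_B^\epsilon(f)$ lies in $E_B^\epsilon[I] \subseteq E_B(I)$, and conclude $f \in E_B^{-1}\big(E_B(I)\big)$ by definition. Your remark about the quantifier on $\epsilon$ is a sensible clarification but does not change the argument.
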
 
\begin{proof}
Let $f \in I$. Then $E^\epsilon _{B}(f) \in E^\epsilon _{B}[I] $, $\forall \epsilon > 0$. By definition $f \in E^{-1}_B\big(E_B(I)\big) $. Hence, we have the required inclusion.
\end{proof}
\begin{theorem} \label{fact2}
For any subcollection $\mathscr F$ of $Z(B_1(X))$, $E_B \big(E^{-1}_B(\mathscr F)\big) \subseteq \mathscr F$, where $E_B \big(E^{-1}_B(\mathscr F)\big)= \bigcup \limits_{\epsilon > 0} \{E_B^\epsilon(f):E_B^\delta(f) \in \mathscr F$, for all $\delta > 0$\}.
\end{theorem} 
\begin{proof}
The proof follows trivially from the definitions of $E_B$ and $E_B^{-1}$.
\end{proof}

\noindent It is interesting to note that, in Theorem~\ref{fact1}, the inclusion may be a strict one even if we consider $I$ as an ideal in $B_1^*(X)$.\\
For example, consider the ring $B_1^*(\mathbb{N})$ and the function $f(n)= \frac{1}{n}$ in $B_1^*(\mathbb{N})$. Suppose $I=<f^2>$, i.e., the ideal in $B_1^*(\mathbb{N})$ generated by $f^2$. It is quite clear that, $f \notin I$. Now we will show that $f \in E^{-1}_B\big(E_B(I)\big)$ or equivalently, $E^\epsilon _{B}(f) \in E_B(I)$, $\forall \epsilon > 0$. For any $\epsilon > 0$, $E^\epsilon _{B}(f)= \{n \in \mathbb{N}: |f(n)| \leq \epsilon\}=\{n \in \mathbb{N}:f^2(n)\leq \epsilon^2 \}=E^{\epsilon^2} _{B}(f^2) \in E_B(I)$. So,  $I \subsetneqq E^{-1}_B\big(E_B(I)\big) $.\\
\noindent In Theorem~\ref{fact2} too, the inclusion may be proper even when $\mathscr F $ is a $Z_B$-filter on $X$. \\ 
As an example in support of our claim, we consider the ring $B_1^*(\mathbb{R})$ and the $Z_B$-filter $\mathscr F= \{Z \in Z(B_1(\mathbb{R})): 0 \in Z\}$ on $\mathbb{R}$. Now we consider a function $f: \mathbb{R} \mapsto \mathbb{R}$ defined by $f(x)= \frac{|x|}{|x|+1}$. Clearly, $\{0\}=Z(f) \in \mathscr F$. We show that $\{0\} \notin E_B\big(E^{-1}_B(\mathscr F)\big) $. Let $g = f + \epsilon$, for any arbitrary $\epsilon > 0$. It is easy to observe that $E^\epsilon_B{(g)} =\{0\}$ but if we take any positive number $\delta < \epsilon$ then $E^\delta _B(g)= \emptyset$, which does not belong to $\mathscr F.$ Hence, $\{0\} \notin E_B\big(E^{-1}_B(\mathscr F)\big)$.

\begin{definition}
An ideal $I$ in $B_1^*(X)$ is called an $e_B$-ideal if $I = E^{-1}_B\big(E_B(I)\big) $.\\
$I$ is an $e_B$-ideal if and only if for all $\epsilon > 0,$ $E_B^\epsilon(f) \in E_B(I)$ implies $f \in I$.
\end{definition}
\noindent From the definition it is clear that the intersection of $e_B$-ideals is an $e_B$-ideal.
\begin{definition}
A $Z_B$-filter $\mathscr F$ is said to be an $e_B$-filter if $\mathscr F = E_B \big(E^{-1}_B(\mathscr F)\big)$. Equivalently, $\mathscr F$ is an $e_B$-filter if and only if whenever $Z \in \mathscr F$ there exist $\epsilon >0$ and $f \in B_1^*(X )$ such that $Z = E_B^\epsilon (f)$ and $E_B^\delta (f) \in \mathscr F$, $\forall \delta > 0$.
 \end{definition}
 \noindent In what follows, we consider $X$ to be always a normal ($T_4$) topological space. 
    
\begin{theorem} \label{thm1}
	If $I$ is any proper ideal in $B_1^*(X)$ then $E_B(I)$ is an $e_B$-filter.
\end{theorem}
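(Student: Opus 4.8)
The plan is to reduce the whole statement to the single assertion that $E_B(I)$ is a $Z_B$-filter, because the $e_B$-filter equation is then automatic. Indeed, Theorem~\ref{fact2} already gives $E_B\big(E_B^{-1}(E_B(I))\big)\subseteq E_B(I)$; conversely, if $Z=E_B^\epsilon(\phi)\in E_B(I)$ with $\phi\in I$, then $E_B^\delta(\phi)\in E_B(I)$ for \emph{every} $\delta>0$ by the very definition of $E_B(I)$, so $\phi\in E_B^{-1}(E_B(I))$ and hence $Z\in E_B\big(E_B^{-1}(E_B(I))\big)$. Thus $E_B(I)=E_B\big(E_B^{-1}(E_B(I))\big)$ as soon as $E_B(I)$ is a $Z_B$-filter. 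The emptiness axiom is immediate: if $\emptyset=E_B^\epsilon(\phi)$ for some $\phi\in I$ and $\epsilon>0$, then $|\phi|>\epsilon$ on $X$, so $\phi^2\in I$ is bounded away from $0$ and is therefore a unit of $B_1^*(X)$, contradicting the properness of $I$.

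Next I would obtain closure under finite intersection as a \emph{consequence} of the superset axiom. Given $Z_i=E_B^{\epsilon_i}(f_i)\in E_B(I)$ for $i=1,2$, the set $Z_1\cap Z_2$ again lies in $Z(B_1(X))$, and for $s:=\frac{f_1^2}{\epsilon_1^2}+\frac{f_2^2}{\epsilon_2^2}\in I$ one checks that $E_B^1(s)\subseteq Z_1\cap Z_2$, since $s(x)\le 1$ forces $|f_i(x)|\le\epsilon_i$ for both $i$. As $E_B^1(s)\in E_B(I)$, the superset axiom upgrades this to $Z_1\cap Z_2\in E_B(I)$. Hence everything rests on upward closure, and this is the step I expect to be the main obstacle.

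To prove upward closure, suppose $W=E_B^\epsilon(\phi)\subseteq Z'$ with $W\in E_B(I)$, $\phi\in I$, and $Z'=Z(h_0)\in Z(B_1(X))$. Replacing $\phi$ by $f:=\phi^2\in I$ (and $\epsilon$ by $\epsilon^2$) I may assume $f\ge 0$, and replacing $h_0$ by $h:=|h_0|\wedge 1$ I may assume $0\le h\le 1$ with $Z(h)=Z'$; note that then $f>\epsilon$ on $X\setminus Z'$. The candidate is $k:=f\wedge(\epsilon+h)$, which is Baire one. Because $\epsilon+h\ge\epsilon$ with $\{\epsilon+h\le\epsilon\}=Z'$ and $\{f\le\epsilon\}=W\subseteq Z'$, a direct check gives $E_B^\epsilon(k)=\{f\le\epsilon\}\cup\{\epsilon+h\le\epsilon\}=W\cup Z'=Z'$. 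Thus $Z'=E_B^\epsilon(k)$ will belong to $E_B(I)$ the instant we know $k\in I$.

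The genuinely delicate point—and the crux of the whole theorem—is the membership $k\in I$, since $I$ is only an ideal and is not closed under $\wedge$. I would write $k=f-\big((f-\epsilon-h)\vee 0\big)$ and prove that the positive part $p:=(f-\epsilon-h)\vee 0$ lies in $I$ by factoring it through $f$. Set $w:=\big(1-\frac{\epsilon+h}{f}\big)\vee 0$ on the $G_\delta$ set $\{f\ge\epsilon\}$, where $\tfrac1f$ is a bounded Baire one function because $f\ge\epsilon>0$ there, and set $w:=0$ on the $G_\delta$ set $\{f\le\epsilon\}$. The two definitions agree on the overlap $\{f=\epsilon\}$ (both equal $0$, as $\epsilon+h\ge\epsilon$), so the Pasting Lemma (Theorem~\ref{lem0.5}) makes $w$ a bounded Baire one function on $X$, and by construction $p=f\cdot w$. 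Since $f\in I$, this gives $p\in I$ and therefore $k=f-p\in I$, completing the upward-closure step and hence the theorem. Normality enters precisely here, both through the Pasting Lemma and through the reciprocal $\tfrac1f$ on $\{f\ge\epsilon\}$.
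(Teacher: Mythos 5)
Your proof is correct and takes essentially the same route as the paper's: reduce everything to the $Z_B$-filter axioms (the identity $E_B(I)=E_B\big(E_B^{-1}(E_B(I))\big)$ then being formal from Theorems~\ref{fact1} and~\ref{fact2}), rule out $\emptyset$ via units, obtain intersections from a sum of squares together with upward closure, and prove upward closure by multiplying $f$ by a bounded Baire one function obtained from the Pasting Lemma (Theorem~\ref{lem0.5}) on the $G_\delta$ pieces $\{f\le\epsilon\}$ and $\{f\ge\epsilon\}$, using $\tfrac{1}{f}$ on the latter. Your witness $k=f\wedge(\epsilon+h)=f(1-w)$ differs only in form from the paper's product $fg$ (which equals $ff'+\epsilon$ on $\{f\ge\epsilon\}$); both constructions invoke normality through the Pasting Lemma in exactly the same way.
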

\begin{proof}
	We prove this in two steps. At first we show that $E_B(I)$ is a $Z_B$-filter and then we establish $E_B(I) = E_B \big(E^{-1}_B(E_B(I)\big)$. To show $E_B(I)$ is a $Z_B$-filter we need to check\\
   	a) $\emptyset \notin E_B(I)$.\\
	b) $E_B(I)$ is closed under finite intersection.\\
	c) $E_B(I)$ is closed under superset.\\
	We assert that $\emptyset \notin E_B(I)$. If our assertion is not true then $\emptyset = E_B^\epsilon(f)$, for some $f \in I$ and some $\epsilon > 0$. So $|f(x)| > \epsilon$, $\forall  x \in X$, which implies that $f$ is bounded away from zero and so $f$ is a unit in $B_1^*(X)$ \cite{AA}. This contradicts that $I$ is proper.\\
	For b), Let $E_B^\epsilon(f)$, $E_B^\delta(g) \in E_B(I)$. Then there exist some $f_1, g_1 \in I $ and $\epsilon_1, \delta_1 >0$ such that $E_B^{\epsilon}(f)=E_B^{\epsilon_1}(f_1)$ and $E_B^{\delta}(g)=E_B^{\delta_1}(g_1)$. We can choose $\epsilon_1, \delta_1$ in such a way that $\delta_1 < \epsilon_1$. Then $E_B^{\delta_1^2}(f_1^2+g_1^2) \subseteq E_B^{\epsilon_1} (f_1) \bigcap E_B^{\delta_1} (g_1) = E_B^\epsilon (f) \bigcap E_B^\delta (g) $. Since $I$ is an ideal, $f_1^2+g_1^2 \in I$ and $E_B^{\delta_1^2}(f_1^2+g_1^2) \in E_B(I)$. Therefore, $E_B(I)$ will be closed under finite intersections if we can show (c), i.e., $E_B(I)$ is closed under supersets, which we shall prove now.\\
	Suppose, $E_B^\epsilon (f) \in E_B(I)$ and $Z(f')$ $ ( f' \in B_1^*(X)$) be any member in $Z(B_1(X))$ so that $E_B^\epsilon(f) \subseteq Z(f')$. We shall show that $Z(f') \in E_B(I)$. Since $E_B^\epsilon(f)=E_B^{\epsilon^2}(f^2)$ and $Z(f')=Z(|f'|)$, we can start with $f, f' \geq 0$.\\ Let $P=\{x \in X:|f(x)| \geq \epsilon \}$. \\
	We define a function $g: X \mapsto \mathbb{R}$ by \\
	 	\[ g(x)= \begin{cases} 
	1 &$ if $x \in E_B^\epsilon(f)$$ \\
	
	f'(x)+ \frac{\epsilon}{f(x)} &$ if x $\in P$$.
	\end{cases}
	\]
	We observe that $E_B^\epsilon(f) \bigcap P = \{ x \in X : f(x)=\epsilon\}$ and $\forall x \in E_B^\epsilon(f) \bigcap P$, $f'(x)+ \frac{\epsilon}{f(x)}= 0 + \frac{\epsilon}{\epsilon}=1.$ It is clear that both $E_B^\epsilon(f)$ and $P$ are $G_\delta$ sets and the constant function $\mathbf{1}$ and $f'(x)+ \frac{\epsilon}{f(x)}$ are Baire one functions on $E_B^\epsilon(f)$ and $P$ respectively. Therefore by Theorem~\ref{lem0.5}, $g$ is a Baire one function on $X$, in fact $g \in B_1^*(X)$. \\
Now consider the function\\
$fg: X \mapsto \mathbb{R}$ given by \\
\[ (fg)(x)= \begin{cases} 
f(x) &$ if $x \in E_B^\epsilon(f)$$ \\

(ff')(x)+ \epsilon &$ if x $\in P$$.

\end{cases}
\].\\
Since $I$ is an ideal $fg \in I$ and it is easy to check that $Z(f')=E_B^\epsilon(fg)$. Hence, $Z(f') \in E_B(I)$. Therefore, $E_B(I)$ is a $Z_B$-filter. By Theorem~\ref{fact1}, we get $I \subseteq E^{-1}_B\big(E_B(I)\big)$. Since the map $E_B$ preserves inclusion, we obtain $E_B(I) \subseteq E_B\big( E^{-1}_B\big(E_B(I)\big)\big)$. Also $E_B(I)$ is a $Z_B$-ideal, so by Theorem~\ref{fact2} $E_B\big( E^{-1}_B\big(E_B(I)\big)\big) \subseteq E_B(I) $. Combining these two we have $E_B\big( E^{-1}_B\big(E_B(I)\big)\big) = E_B(I)$. Hence, $E_B(I)$ is a $e_B$-filter.
\end{proof}

\begin{theorem} \label{thm2}
	For any $Z_B$-filter $\mathscr F$ on $X$, $E_B^{-1}(\mathscr F)$ is an $e_B$-ideal in $B_1^*(X)$.
\end{theorem}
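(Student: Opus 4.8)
The plan is to prove the statement in two stages: first that $J := E_B^{-1}(\mathscr F)$ is an ideal of $B_1^*(X)$, and then that it is fixed by the operator $E_B^{-1}\circ E_B$, which is exactly the $e_B$-ideal condition.

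For the ideal structure I would argue directly from the description $J = \{f \in B_1^*(X) : E_B^\epsilon(f) \in \mathscr F \text{ for all } \epsilon > 0\}$, invoking only the three $Z_B$-filter axioms. The constant function $0$ lies in $J$, since $E_B^\epsilon(0) = X = Z(0) \in \mathscr F$ for every $\epsilon > 0$, so $J \neq \emptyset$; and $-f \in J$ whenever $f \in J$ because $|{-f}| = |f|$ forces $E_B^\epsilon(-f) = E_B^\epsilon(f)$. Closure under addition rests on the set inclusion $E_B^{\epsilon/2}(f) \cap E_B^{\epsilon/2}(g) \subseteq E_B^\epsilon(f+g)$: the left-hand side is a finite intersection of members of $\mathscr F$, hence in $\mathscr F$, and since $E_B^\epsilon(f+g)$ is itself a zero set in $Z(B_1(X))$ (namely $Z((|f+g|-\epsilon)\vee 0)$), the superset axiom places it in $\mathscr F$. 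For absorption I use that any $h \in B_1^*(X)$ is bounded, say $|h| \leq M$ with $M > 0$; then $E_B^{\epsilon/M}(f) \subseteq E_B^\epsilon(hf)$, and the same filter-superset argument gives $E_B^\epsilon(hf) \in \mathscr F$. This is the one step where boundedness of the ambient functions is genuinely used, and it is the natural counterpart of the $B_1(X)$ preimage theorem.

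To see that $J$ is an $e_B$-ideal I must establish $J = E_B^{-1}\big(E_B(J)\big)$. Theorem~\ref{fact1} already supplies $J \subseteq E_B^{-1}\big(E_B(J)\big)$, so only the reverse inclusion requires work. I take $f$ with $E_B^\epsilon(f) \in E_B(J)$ for every $\epsilon > 0$ and fix $\epsilon > 0$: by the definition of $E_B(J)$ there are $g \in J$ and $\delta > 0$ with $E_B^\epsilon(f) = E_B^\delta(g)$, and $g \in J$ forces $E_B^\delta(g) \in \mathscr F$, whence $E_B^\epsilon(f) \in \mathscr F$. Since $\epsilon$ was arbitrary this yields $f \in J$, so $E_B^{-1}\big(E_B(J)\big) \subseteq J$ and equality follows.

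I do not anticipate a serious obstacle: once the two set inclusions $E_B^{\epsilon/2}(f) \cap E_B^{\epsilon/2}(g) \subseteq E_B^\epsilon(f+g)$ and $E_B^{\epsilon/M}(f) \subseteq E_B^\epsilon(hf)$ are pinned down, the ideal verification reduces entirely to the $Z_B$-filter axioms. The only point demanding care is to confirm at each step that the set being fed into $\mathscr F$ really belongs to $Z(B_1(X))$, so that the superset axiom applies; this always holds because every set of the form $E_B^\epsilon(\cdot)$ is the zero set of a bounded Baire one function. The reverse inclusion in the $e_B$-ideal step is essentially a formality once $E_B(J)$ is read as the family of sets $E_B^\delta(g)$ with $g \in J$ and $\delta > 0$.
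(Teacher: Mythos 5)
Your proposal is correct and follows essentially the same route as the paper: the ideal structure is verified via the same two inclusions $E_B^{\epsilon/2}(f)\cap E_B^{\epsilon/2}(g)\subseteq E_B^{\epsilon}(f+g)$ and $E_B^{\epsilon/M}(f)\subseteq E_B^{\epsilon}(hf)$ together with the $Z_B$-filter axioms, and the $e_B$-property via Theorem~\ref{fact1} plus the reverse inclusion. Your direct argument for that reverse inclusion merely inlines what the paper obtains by citing Theorem~\ref{fact2} and the fact that $E_B^{-1}$ preserves inclusion, so there is no substantive difference.
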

\begin{proof}
	 We first show that $E_B^{-1}(\mathscr F)$ is an ideal in $B_1^*(X)$. Let $f,g \in E_B^{-1}(\mathscr F)$. Therefore for any arbitrary $\epsilon > 0$,  $E_B^{\frac{\epsilon}{2}}(f)$, $E_B^{\frac{\epsilon}{2}}(g) \in \mathscr F$. $\mathscr F$ being a $Z_B$-filter on $X$, $E_B^{\frac{\epsilon}{2}}(f) \bigcap E_B^{\frac{\epsilon}{2}}(g) \in \mathscr F$. Also, we know $E_B^{\frac{\epsilon}{2}}(f) \bigcap E_B^{\frac{\epsilon}{2}}(g) \subseteq E_B^{\epsilon} (f+g)$. Hence, $E_B^{\epsilon} (f+g) \in \mathscr F$, or equivalently $f+g \in E_B^{-1} (\mathscr F)$.\\
	 Now consider $f \in E_B^{-1}(\mathscr F)$ and  $h$ be any bounded Baire one function on $X$ with an upper bound $M > 0$ and $\epsilon$ be any arbitrary positive real number. So $|h(x)| \leq M$, for all $x \in X$. For any point $x \in E_B^{\frac{\epsilon}{M}}(f) \implies |f(x)| \leq \frac{\epsilon}{M} \implies |Mf(x)| \leq \epsilon \implies |f(x)h(x)| \leq \epsilon \implies x \in E_B^\epsilon(fh) $. This implies $E_B^{\frac{\epsilon}{M}}(f) \subseteq E_B^\epsilon(fh)$. So $E_B^\epsilon(fh) \in \mathscr F$, for any arbitrary $\epsilon > 0$. Therefore by definition of $E_B^{-1}(\mathscr F)$,  $fh \in E_B^{-1}(\mathscr F)$. Hence $E_B^{-1}(\mathscr F)$ is an ideal in $B_1^*(X)$.\\
	 By Theorem~\ref{fact1}, $E_B^{-1}(\mathscr F) \subseteq E^{-1}_B\big(E_B(E_B^{-1}(\mathscr F))\big) $. Also by Theorem~\ref{fact2}, $E_B(E_B^{-1}(\mathscr F)) \subseteq \mathscr F$. Since $E_B^{-1}$ preserves inclusion, $E^{-1}_B\big(E_B(E_B^{-1}(\mathscr F))\big) \subseteq E_B^{-1}(\mathscr F)$. Hence $E_B^{-1}(\mathscr F) = E^{-1}_B\big(E_B(E_B^{-1}(\mathscr F))\big) $ and so, $E_B^{-1}(\mathscr F)$ is an $e_B$-ideal.
\end{proof}
\begin{corollary}
The correspondence $I \mapsto E_B(I)$ is one-one from the set of all $e_B$-ideals in $B_1^*(X)$ onto the set of all $e_B$-filters on $X$.
\end{corollary}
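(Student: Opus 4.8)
The plan is to exhibit the two assignments $I \mapsto E_B(I)$ and $\mathscr F \mapsto E_B^{-1}(\mathscr F)$ as a pair of mutually inverse maps between the (proper) $e_B$-ideals of $B_1^*(X)$ and the $e_B$-filters on $X$; once that is in place, both injectivity and surjectivity of $I \mapsto E_B(I)$ drop out formally. The substantive groundwork is already done. Theorem~\ref{thm1} tells me that $E_B(I)$ is an $e_B$-filter whenever $I$ is a proper ideal, so $I \mapsto E_B(I)$ genuinely carries $e_B$-ideals into $e_B$-filters; and Theorem~\ref{thm2} tells me that $E_B^{-1}(\mathscr F)$ is an $e_B$-ideal for every $Z_B$-filter $\mathscr F$, in particular for every $e_B$-filter. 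Thus both maps are well defined on the relevant domains.

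Next I would record the two defining identities that make them inverse to one another. By the definition of an $e_B$-ideal, $E_B^{-1}\big(E_B(I)\big) = I$ for every $e_B$-ideal $I$; by the definition of an $e_B$-filter, $E_B\big(E_B^{-1}(\mathscr F)\big) = \mathscr F$ for every $e_B$-filter $\mathscr F$. The first identity asserts that $E_B^{-1}\circ E_B$ is the identity on $e_B$-ideals, and the second that $E_B\circ E_B^{-1}$ is the identity on $e_B$-filters.

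From here the corollary follows without further computation. For injectivity, if $I_1$ and $I_2$ are $e_B$-ideals with $E_B(I_1)=E_B(I_2)$, then applying $E_B^{-1}$ and invoking the first identity gives $I_1 = E_B^{-1}\big(E_B(I_1)\big)=E_B^{-1}\big(E_B(I_2)\big)=I_2$. For surjectivity, given any $e_B$-filter $\mathscr F$, the set $E_B^{-1}(\mathscr F)$ is an $e_B$-ideal by Theorem~\ref{thm2}, and the second identity shows that its image $E_B\big(E_B^{-1}(\mathscr F)\big)$ equals $\mathscr F$; hence every $e_B$-filter is attained. So $I \mapsto E_B(I)$ is a bijection, with inverse $\mathscr F \mapsto E_B^{-1}(\mathscr F)$.

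The genuine content lies entirely in Theorems~\ref{thm1} and \ref{thm2} (together with Theorems~\ref{fact1} and \ref{fact2}), so the remaining argument is purely formal; there is no estimate to grind through. The one point that demands care, and the only real obstacle, is properness: an $e_B$-filter is by definition a $Z_B$-filter and so does not contain $\emptyset$, whereas $E_B\big(B_1^*(X)\big)$ does contain $\emptyset = E_B^{\epsilon}(1)$ for any $0<\epsilon<1$. Consequently $I \mapsto E_B(I)$ can land among the $e_B$-filters only when it is restricted to the \emph{proper} $e_B$-ideals, and it is this collection that the bijection pairs with the $e_B$-filters. I would make this restriction explicit at the outset so that the invocation of Theorem~\ref{thm1} is legitimate; with it in hand the two identities complete the proof.
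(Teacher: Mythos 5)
Your proof is correct and is exactly the argument the paper intends: the corollary is stated without proof precisely because it follows formally from Theorems~\ref{thm1} and \ref{thm2} together with the defining identities $I = E_B^{-1}\big(E_B(I)\big)$ and $\mathscr F = E_B\big(E_B^{-1}(\mathscr F)\big)$. Your explicit remark that the correspondence must be restricted to \emph{proper} $e_B$-ideals (since $E_B\big(B_1^*(X)\big)$ contains $\emptyset$ and so is not a $Z_B$-filter) is a worthwhile precision that the paper leaves implicit.
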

\begin{theorem}
	If $I$ is an ideal in $B_1^*(X)$ then $E_B^{-1}\big(E_B(I) \big)$ is the smallest $e_B$-ideal containing $I$.
\end{theorem}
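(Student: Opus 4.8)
The plan is to verify three things in turn: that $E_B^{-1}\big(E_B(I)\big)$ is an $e_B$-ideal, that it contains $I$, and that it is contained in every $e_B$-ideal that contains $I$. The first two are essentially immediate from the machinery already in place. For the $e_B$-ideal claim I would first invoke Theorem~\ref{thm1} to conclude that $E_B(I)$ is an $e_B$-filter, and in particular a $Z_B$-filter, on $X$ (here $I$ is taken to be proper; if $I=B_1^*(X)$ then every $E_B^\epsilon(f)$ with $f\in B_1^*(X)$ already lies in $E_B(I)$, so $E_B^{-1}\big(E_B(I)\big)=B_1^*(X)=I$ and the statement is trivial). Having identified $E_B(I)$ as a $Z_B$-filter, I would then apply Theorem~\ref{thm2} with $\mathscr F=E_B(I)$, which directly yields that $E_B^{-1}\big(E_B(I)\big)=E_B^{-1}(\mathscr F)$ is an $e_B$-ideal in $B_1^*(X)$. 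The containment $I\subseteq E_B^{-1}\big(E_B(I)\big)$ is nothing but Theorem~\ref{fact1}.

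The remaining and genuinely characterizing step is minimality. Here I would let $J$ be an arbitrary $e_B$-ideal of $B_1^*(X)$ with $I\subseteq J$ and aim to show $E_B^{-1}\big(E_B(I)\big)\subseteq J$. The argument is a short monotonicity chase: from $I\subseteq J$ and property (i) above I get $E_B(I)\subseteq E_B(J)$, and then from property (ii), that $E_B^{-1}$ preserves inclusion, I get $E_B^{-1}\big(E_B(I)\big)\subseteq E_B^{-1}\big(E_B(J)\big)$. Since $J$ is by hypothesis an $e_B$-ideal, $E_B^{-1}\big(E_B(J)\big)=J$, and the desired inclusion $E_B^{-1}\big(E_B(I)\big)\subseteq J$ follows. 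Combined with the previous paragraph, this exhibits $E_B^{-1}\big(E_B(I)\big)$ as an $e_B$-ideal containing $I$ that is contained in every $e_B$-ideal containing $I$, i.e.\ the smallest such.

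I do not expect a serious obstacle, since every ingredient, namely the monotonicity of $E_B$ and $E_B^{-1}$ together with Theorems~\ref{fact1}, \ref{thm1} and \ref{thm2}, is already available. The one point requiring care is the properness hypothesis hidden in Theorem~\ref{thm1}: that theorem guarantees $E_B(I)$ is a $Z_B$-filter only when $I$ is proper, so I would isolate the degenerate case $I=B_1^*(X)$ at the outset. A secondary bookkeeping remark is that the minimality argument uses only the defining identity $J=E_B^{-1}\big(E_B(J)\big)$ of an $e_B$-ideal together with monotonicity, and so needs nothing about the normality of $X$ or the internal structure of $J$; normality enters the proof only through Theorems~\ref{thm1} and \ref{thm2}.
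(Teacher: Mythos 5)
Your proof is correct and follows essentially the same route as the paper: Theorems~\ref{thm1} and~\ref{thm2} give that $E_B^{-1}\big(E_B(I)\big)$ is an $e_B$-ideal, Theorem~\ref{fact1} gives the containment, and minimality is the same monotonicity chase using $J=E_B^{-1}\big(E_B(J)\big)$. Your explicit handling of the improper case $I=B_1^*(X)$, which the paper's statement of Theorem~\ref{thm1} technically requires, is a small but genuine improvement in rigor.
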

\begin{proof}
It follows from Theorem \ref{thm1} and Theorem \ref{thm2} that $E_B^{-1}\big(E_B(I) \big)$ is an $e_B$-ideal. Also from Theorem \ref{fact1} we have $I \subseteq E_B^{-1}\big(E_B(I) \big)$.  If possible let $\mathscr J$ be any $e_B$-ideal containing $I$. So $I \subseteq \mathscr J$ and since $E_B$ and $E_B^{-1}$ preserve inclusion, we can write $E_B^{-1}\big(E_B(I)\big) \subseteq E_B^{-1}\big(E_B(\mathscr J)\big)= \mathscr J$ $\big($since $  \mathscr J$ is an $e_B$-ideal $\big)$. It completes the proof. 
\end{proof}
\begin{corollary} \label{cor0.8}
Every maximal ideal in $B_1^*(X)$ is an $e_B$-ideal.
\end{corollary}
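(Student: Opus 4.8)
The plan is to prove that every maximal ideal $M$ in $B_1^*(X)$ is an $e_B$-ideal by appealing directly to the preceding theorem, which asserts that $E_B^{-1}\bigl(E_B(I)\bigr)$ is the smallest $e_B$-ideal containing any ideal $I$. Taking $I = M$, I obtain that $E_B^{-1}\bigl(E_B(M)\bigr)$ is an $e_B$-ideal and that $M \subseteq E_B^{-1}\bigl(E_B(M)\bigr)$. The entire argument then reduces to showing this containment is an equality, which will follow from the maximality of $M$.

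First I would observe that $E_B^{-1}\bigl(E_B(M)\bigr)$ is a \emph{proper} ideal of $B_1^*(X)$. Indeed, since $M$ is proper, Theorem~\ref{thm1} guarantees that $E_B(M)$ is an $e_B$-filter; in particular $\emptyset \notin E_B(M)$, since a $Z_B$-filter by definition excludes the empty set. Now if $E_B^{-1}\bigl(E_B(M)\bigr)$ were all of $B_1^*(X)$, it would contain the constant function $1$, forcing $E_B^\epsilon(1) = \emptyset$ to lie in $E_B(M)$ for small $\epsilon$, a contradiction. Hence $E_B^{-1}\bigl(E_B(M)\bigr)$ is a proper ideal containing $M$.

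Next, invoking the maximality of $M$: since $M$ is a maximal ideal and $M \subseteq E_B^{-1}\bigl(E_B(M)\bigr) \subsetneq B_1^*(X)$, the only possibility is $M = E_B^{-1}\bigl(E_B(M)\bigr)$. Because the right-hand side is an $e_B$-ideal, it follows immediately that $M$ is an $e_B$-ideal, which is exactly the claim.

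I do not anticipate a genuine obstacle here, as the statement is an almost formal consequence of the smallest-$e_B$-ideal theorem once properness is checked. The one point requiring a moment of care is the verification that $E_B^{-1}\bigl(E_B(M)\bigr)$ does not collapse to the whole ring; this is where the properness of $M$ and the fact that $E_B(M)$ omits $\emptyset$ are genuinely used, and it is what prevents the trivial ideal-containment argument from degenerating. Everything else is a direct application of maximality together with the preceding theorem.
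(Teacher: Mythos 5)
Your proof is correct and follows essentially the same route as the paper, which simply states that the corollary ``follows immediately from the theorem'' on the smallest $e_B$-ideal containing an ideal. You additionally make explicit the one step the paper leaves implicit --- that $E_B^{-1}\bigl(E_B(M)\bigr)$ is proper (via $\emptyset \notin E_B(M)$), which is needed before maximality can force equality --- and this verification is accurate.
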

\begin{proof}
Follows immediately from the theorem.
\end{proof}
\begin{corollary}
 Intersection of maximal ideals in $B_1^*(X)$ is an $e_B$-ideal.
\end{corollary}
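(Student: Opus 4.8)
The plan is to combine two facts already in hand: by Corollary~\ref{cor0.8} every maximal ideal of $B_1^*(X)$ is an $e_B$-ideal, and, as recorded just after the definition of an $e_B$-ideal, an arbitrary intersection of $e_B$-ideals is again an $e_B$-ideal. Once the intersection of the maximal ideals is exhibited as an intersection of $e_B$-ideals, the result drops out immediately. So the only point that really needs spelling out is the stability of the class of $e_B$-ideals under intersection, and that is what I would verify carefully.

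To do so I would let $\{I_\alpha\}$ be any family of $e_B$-ideals and set $I = \bigcap_\alpha I_\alpha$. Since an intersection of ideals is an ideal, $I$ is an ideal of $B_1^*(X)$, and by Theorem~\ref{fact1} the inclusion $I \subseteq E_B^{-1}\big(E_B(I)\big)$ holds automatically. Hence it suffices to establish the reverse inclusion $E_B^{-1}\big(E_B(I)\big) \subseteq I$.

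For the reverse inclusion I would take $f \in E_B^{-1}\big(E_B(I)\big)$, so that $E_B^\epsilon(f) \in E_B(I)$ for every $\epsilon > 0$. Because $I \subseteq I_\alpha$ and $E_B$ preserves inclusion, $E_B(I) \subseteq E_B(I_\alpha)$, whence $E_B^\epsilon(f) \in E_B(I_\alpha)$ for every $\epsilon > 0$ and every $\alpha$; that is, $f \in E_B^{-1}\big(E_B(I_\alpha)\big)$. As each $I_\alpha$ is an $e_B$-ideal, $E_B^{-1}\big(E_B(I_\alpha)\big) = I_\alpha$, so $f \in I_\alpha$ for all $\alpha$ and therefore $f \in I$. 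This yields $E_B^{-1}\big(E_B(I)\big) \subseteq I$, giving $I = E_B^{-1}\big(E_B(I)\big)$, so $I$ is an $e_B$-ideal.

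I do not expect any genuine obstacle here: the argument is a short diagram chase through the inclusion-preserving maps $E_B$ and $E_B^{-1}$, and the only step carrying weight is the reverse inclusion, which rests entirely on the defining equality $E_B^{-1}\big(E_B(I_\alpha)\big) = I_\alpha$ for each $e_B$-ideal factor. Applying this to the family of maximal ideals of $B_1^*(X)$, each of which is an $e_B$-ideal by Corollary~\ref{cor0.8}, completes the proof.
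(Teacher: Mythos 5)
Your proposal is correct and follows exactly the route the paper intends: it combines Corollary~\ref{cor0.8} (maximal ideals are $e_B$-ideals) with the observation, recorded right after the definition of $e_B$-ideal, that intersections of $e_B$-ideals are $e_B$-ideals — the paper simply declares the result ``Clear.'' Your verification of the intersection-stability step via $E_B^{-1}\big(E_B(I)\big) \subseteq E_B^{-1}\big(E_B(I_\alpha)\big) = I_\alpha$ is sound and just fills in the details the paper leaves implicit.
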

\begin{proof}
Straightforward.
\end{proof}
\begin{theorem}
	For any $Z_B$-filter $\mathscr F$ on $X$, $E_B\big( E_B^{-1}(\mathscr F)\big)$ is the largest $e_B$-filter contained in $\mathscr F$.
\end{theorem}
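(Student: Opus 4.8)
The plan is to mirror the structure already used to prove that $E_B^{-1}\big(E_B(I)\big)$ is the smallest $e_B$-ideal containing an ideal $I$, but now running the correspondence in the opposite direction. I would prove two things: first, that $E_B\big(E_B^{-1}(\mathscr F)\big)$ is genuinely an $e_B$-filter contained in $\mathscr F$; second, that it contains every $e_B$-filter contained in $\mathscr F$, which gives the "largest" conclusion.

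For the first part, I would begin by invoking Theorem~\ref{thm2}, which guarantees that $E_B^{-1}(\mathscr F)$ is an $e_B$-ideal (in particular an ideal) in $B_1^*(X)$. Applying Theorem~\ref{thm1} to this ideal then tells me that $E_B\big(E_B^{-1}(\mathscr F)\big)$ is an $e_B$-filter, provided $E_B^{-1}(\mathscr F)$ is proper. The containment $E_B\big(E_B^{-1}(\mathscr F)\big) \subseteq \mathscr F$ is exactly Theorem~\ref{fact2}, so that half costs nothing. I expect the one point requiring care here is the properness hypothesis in Theorem~\ref{thm1}: if $\mathscr F$ is a genuine $Z_B$-filter then $\emptyset \notin \mathscr F$, and one checks that $E_B^{-1}(\mathscr F)$ cannot then contain a unit, since a unit $f$ of $B_1^*(X)$ is bounded away from zero and so has $E_B^\epsilon(f) = \emptyset$ for small $\epsilon$, forcing $\emptyset \in \mathscr F$, a contradiction.

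For the maximality (largest) part, the plan is as follows. Suppose $\mathscr G$ is any $e_B$-filter with $\mathscr G \subseteq \mathscr F$. Since $E_B^{-1}$ preserves inclusion, $E_B^{-1}(\mathscr G) \subseteq E_B^{-1}(\mathscr F)$, and applying the inclusion-preserving map $E_B$ gives $E_B\big(E_B^{-1}(\mathscr G)\big) \subseteq E_B\big(E_B^{-1}(\mathscr F)\big)$. But $\mathscr G$ being an $e_B$-filter means precisely $\mathscr G = E_B\big(E_B^{-1}(\mathscr G)\big)$, so the left-hand side is just $\mathscr G$. This yields $\mathscr G \subseteq E_B\big(E_B^{-1}(\mathscr F)\big)$, which together with the containment $E_B\big(E_B^{-1}(\mathscr F)\big) \subseteq \mathscr F$ established above shows that $E_B\big(E_B^{-1}(\mathscr F)\big)$ is the largest $e_B$-filter sitting inside $\mathscr F$.

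The genuine obstacle, as far as there is one, is the properness issue flagged in the first paragraph: Theorem~\ref{thm1} is stated only for proper ideals, so I must verify that $E_B^{-1}(\mathscr F)$ is proper before applying it, and this is where the assumption that $\mathscr F$ is a $Z_B$-filter (so $\emptyset \notin \mathscr F$) is actually used. Everything else is a formal consequence of the two adjunction-type inequalities (Theorems~\ref{fact1} and~\ref{fact2}) and the inclusion-preserving property of $E_B$ and $E_B^{-1}$ recorded just after their definitions; this is the exact dual of the preceding theorem and should read as a short, clean argument.
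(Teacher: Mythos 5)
Your proposal is correct and follows essentially the same route as the paper's proof: the first half is obtained by combining Theorems~\ref{thm1}, \ref{thm2} and \ref{fact2}, and the maximality half uses exactly the same chain $\mathscr G \subseteq \mathscr F \implies E_B\big(E_B^{-1}(\mathscr G)\big) \subseteq E_B\big(E_B^{-1}(\mathscr F)\big)$ together with the idempotence $\mathscr G = E_B\big(E_B^{-1}(\mathscr G)\big)$ of an $e_B$-filter. Your explicit verification that $E_B^{-1}(\mathscr F)$ is a proper ideal (so that Theorem~\ref{thm1} applies) is a detail the paper leaves implicit, and it is handled correctly.
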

\begin{proof}
Theorem~\ref{thm1}, Theorem~\ref{thm2} and Theorem \ref{fact2} show that $E_B\big( E_B^{-1}(\mathscr F)\big)$ is an $e_B$-filter contained in $\mathscr F$. If possible, let $\mathscr E$ be any $e_B$-filter contained in $\mathscr F$. So $\mathscr E \subseteq \mathscr F \implies E_B\big( E_B^{-1}(\mathscr E)\big) \subseteq E_B\big( E_B^{-1}(\mathscr F)\big) \implies \mathscr E \subseteq E_B\big( E_B^{-1}(\mathscr F)\big)$. This completes the proof.
\end{proof}

\begin{lemma} \label{lem0.11}
	Let $I$ and $J$ be two ideals in $B_1^*(X)$ with $J$ be an $e_B$-ideal. Then $I \subseteq J$ if and only if $E_B(I) \subseteq E_B(J)$.
\end{lemma}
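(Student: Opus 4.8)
The plan is to prove the biconditional in Lemma~\ref{lem0.11} by establishing each direction separately, relying on the monotonicity of $E_B$ and the $e_B$-ideal hypothesis on $J$.

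First I would handle the forward direction, which is immediate: if $I \subseteq J$, then since $E_B$ preserves inclusion (property (i) recorded before Theorem~\ref{fact1}), we get $E_B(I) \subseteq E_B(J)$ at once. Notice this direction uses nothing special about $J$; the real content lies entirely in the converse.

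For the converse, assume $E_B(I) \subseteq E_B(J)$ and aim to show $I \subseteq J$. The key idea is to feed the inclusion through $E_B^{-1}$, which also preserves inclusion (property (ii)), yielding $E_B^{-1}\big(E_B(I)\big) \subseteq E_B^{-1}\big(E_B(J)\big)$. Now I would invoke Theorem~\ref{fact1} on the left side, giving $I \subseteq E_B^{-1}\big(E_B(I)\big)$, and crucially use that $J$ is an $e_B$-ideal on the right side, so that $E_B^{-1}\big(E_B(J)\big) = J$ by definition. Chaining these inclusions gives $I \subseteq E_B^{-1}\big(E_B(I)\big) \subseteq E_B^{-1}\big(E_B(J)\big) = J$, which is exactly what is wanted.

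The proof is essentially a formal manipulation, so I do not anticipate a genuine obstacle; the only point requiring care is recognizing that the $e_B$-ideal hypothesis on $J$ is precisely what converts $E_B^{-1}\big(E_B(J)\big)$ back into $J$, and that this hypothesis is indispensable — without it one would only recover $I \subseteq E_B^{-1}\big(E_B(J)\big)$, the smallest $e_B$-ideal containing $J$, which need not equal $J$. This mirrors the standard Galois-connection argument between ideals and filters, where the equality half of the closure operator is what makes the correspondence tight.
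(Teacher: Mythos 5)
Your proof is correct and is essentially the paper's argument: the paper just unwinds your chain $I \subseteq E_B^{-1}\big(E_B(I)\big) \subseteq E_B^{-1}\big(E_B(J)\big) = J$ element-wise, taking $f \in I$ and noting $E_B^\epsilon(f) \in E_B(I) \subseteq E_B(J)$ for every $\epsilon > 0$, whence $f \in J$ by the $e_B$-ideal property. The two presentations are the same argument in different packaging.
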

\begin{proof}
	$I \subseteq J \implies E_B(I) \subseteq E_B(J)$ follows from the definition of $E_B(I)$.\\
	For the converse, let $f \in I$. To show that $f \in J$. \\
	Suppose $\epsilon > 0$ be any arbitrary positive number. $f \in I \implies E_B^\epsilon(f) \in E_B(I) \implies E_B^\epsilon(f) \in E_B(J) \implies f \in J $ (since $J$ is an $e_B$-ideal ). Therefore, $I \subseteq J$.
\end{proof}
\begin{lemma}
	For any two $Z_B$-filters $\mathscr F_1$ and $\mathscr F_2$ on $X$, with $\mathscr F_1$ be an $e_B$-filter, $\mathscr F_1 \subseteq \mathscr F_2$ if and only if $E_B^{-1}(\mathscr F_1) \subseteq E_B^{-1}(\mathscr F_2)$
\end{lemma}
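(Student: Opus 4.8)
The plan is to mirror the proof of Lemma~\ref{lem0.11}, exploiting the monotonicity of the operators $E_B$ and $E_B^{-1}$ together with the defining collapse property of $e_B$-filters. The forward implication I expect to be immediate: if $\mathscr F_1 \subseteq \mathscr F_2$, then since $E_B^{-1}$ preserves inclusion (fact (ii) recorded just before Theorem~\ref{fact1}), I obtain $E_B^{-1}(\mathscr F_1) \subseteq E_B^{-1}(\mathscr F_2)$ at once, with no use of the $e_B$-filter hypothesis.

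For the converse I would assume $E_B^{-1}(\mathscr F_1) \subseteq E_B^{-1}(\mathscr F_2)$ and argue element-wise, taking an arbitrary $Z \in \mathscr F_1$. This is where the hypothesis that $\mathscr F_1$ is an $e_B$-filter does the work: by the characterization in the definition of $e_B$-filter, there exist $\epsilon > 0$ and $f \in B_1^*(X)$ with $Z = E_B^\epsilon(f)$ and $E_B^\delta(f) \in \mathscr F_1$ for every $\delta > 0$; the latter is exactly the statement that $f \in E_B^{-1}(\mathscr F_1)$. Feeding this into the assumed inclusion gives $f \in E_B^{-1}(\mathscr F_2)$, i.e. $E_B^\delta(f) \in \mathscr F_2$ for all $\delta > 0$; specializing to $\delta = \epsilon$ yields $Z = E_B^\epsilon(f) \in \mathscr F_2$, and hence $\mathscr F_1 \subseteq \mathscr F_2$.

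Alternatively, I could phrase the converse purely at the level of collections: since $\mathscr F_1$ is an $e_B$-filter, $\mathscr F_1 = E_B\big(E_B^{-1}(\mathscr F_1)\big)$; applying the monotone operator $E_B$ to the assumed inclusion and then invoking Theorem~\ref{fact2} for $\mathscr F_2$ gives the chain $\mathscr F_1 = E_B\big(E_B^{-1}(\mathscr F_1)\big) \subseteq E_B\big(E_B^{-1}(\mathscr F_2)\big) \subseteq \mathscr F_2$. I do not anticipate a genuine obstacle here, since the statement is the filter-side dual of Lemma~\ref{lem0.11}. The one point deserving care is to use the $e_B$-filter hypothesis on $\mathscr F_1$, the \emph{smaller} filter, rather than on $\mathscr F_2$: the collapse $E_B \circ E_B^{-1} = \mathrm{id}$ must be applied to the left-hand member of the target inclusion, which is the mirror image of the role played by the $e_B$-ideal $J$ (the \emph{larger} object) in Lemma~\ref{lem0.11}, reflecting the contravariance of the correspondence.
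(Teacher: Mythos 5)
Your proof is correct and is exactly the argument the paper intends: the paper dismisses this lemma as ``Straightforward,'' and your element-wise use of the $e_B$-filter characterization of $\mathscr F_1$ (equivalently, the one-line chain $\mathscr F_1 = E_B\big(E_B^{-1}(\mathscr F_1)\big) \subseteq E_B\big(E_B^{-1}(\mathscr F_2)\big) \subseteq \mathscr F_2$ via Theorem~\ref{fact2}) supplies the omitted details. Your closing remark about placing the $e_B$ hypothesis on the smaller filter, mirroring the role of the $e_B$-ideal $J$ in Lemma~\ref{lem0.11}, is precisely the right point of care.
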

\begin{proof}
Straightforward.	
\end{proof}
\begin{lemma} \label{lem13}
	Let $\mathscr A$ be any $Z_B$-ultrafilter. If a zero set $Z$ meets every member of $\mathscr A$ then $Z \in \mathscr A$.
\end{lemma}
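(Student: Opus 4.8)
The plan is to exploit the maximality of the $Z_B$-ultrafilter $\mathscr A$: from $\mathscr A$ and the zero set $Z$ I would manufacture a $Z_B$-filter that contains $\mathscr A$ together with $Z$, and then maximality forces this new filter to equal $\mathscr A$, whence $Z \in \mathscr A$. The hypothesis that $Z$ meets every member of $\mathscr A$ is precisely the ingredient that keeps the construction from collapsing to a collection containing $\emptyset$.

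Concretely, I would set
\[
\mathscr B = \{W \in Z(B_1(X)) : Z \cap A \subseteq W \text{ for some } A \in \mathscr A\}.
\]
First I would check that $\mathscr B$ is a $Z_B$-filter. Condition (1), $\emptyset \notin \mathscr B$, is exactly where the hypothesis is used: if $\emptyset \in \mathscr B$, then $Z \cap A \subseteq \emptyset$ for some $A \in \mathscr A$, i.e. $Z \cap A = \emptyset$, contradicting that $Z$ meets $A$. For condition (2), given $W_1, W_2 \in \mathscr B$ with $Z \cap A_1 \subseteq W_1$ and $Z \cap A_2 \subseteq W_2$, I would use $A_1 \cap A_2 \in \mathscr A$ (since $\mathscr A$ is a filter) to get $Z \cap (A_1 \cap A_2) \subseteq W_1 \cap W_2$; here one must also note that $W_1 \cap W_2 \in Z(B_1(X))$, which holds because $Z(f) \cap Z(g) = Z(f^2 + g^2)$ and $B_1(X)$ is closed under sums and squares. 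Condition (3), closure under zero-set supersets, is immediate from the definition of $\mathscr B$.

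Next I would verify the two inclusions that make maximality bite. Every $A \in \mathscr A$ satisfies $Z \cap A \subseteq A$, so $\mathscr A \subseteq \mathscr B$; and taking any $A \in \mathscr A$ (the filter is nonempty) gives $Z \cap A \subseteq Z$, so $Z \in \mathscr B$. Thus $\mathscr B$ is a $Z_B$-filter containing $\mathscr A$, and since $\mathscr A$ is a $Z_B$-ultrafilter we conclude $\mathscr B = \mathscr A$; in particular $Z \in \mathscr A$, as required.

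I expect no serious obstacle here, since this is the $B_1$-analogue of the classical ultrafilter lemma; the one point demanding genuine care is the closure of $\mathscr B$ under finite intersection, where I must confirm both that $Z(B_1(X))$ is itself closed under finite intersection and that the filter property of $\mathscr A$ supplies the common refinement $A_1 \cap A_2$ needed to realize $W_1 \cap W_2$ as a member of $\mathscr B$.
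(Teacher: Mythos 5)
Your proof is correct and takes essentially the same route as the paper's: adjoin $Z$ to $\mathscr A$, extend to a $Z_B$-filter (possible precisely because $Z$ meets every member of $\mathscr A$, which gives the finite intersection property), and invoke maximality to conclude $Z \in \mathscr A$. The paper merely asserts that the collection $\mathscr A \cup \{Z\}$ extends to a $Z_B$-filter, whereas you write out the generated filter explicitly and verify the axioms --- a harmless elaboration of the same argument.
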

\begin{proof}
	Consider the collection $\mathscr A \cup \{Z\}$. By hypothesis, it has finite intersection property. So it can be extended to a $Z_B$-filter with $Z$ as one of its member. As this $Z_B$-filter contains a maximal $Z_B$-filter, it must be $\mathscr A$. So $Z \in \mathscr A.$ 
\end{proof}
\begin{theorem} \label{thm0.14}
	Let $\mathscr A$ be any $Z_B$-ultrafilter. A zero set $Z$ in $Z(B_1(X))$ belongs to $\mathscr A$ if and only if $Z$ meets every member of $E_B\big( E_B^{-1}(\mathscr A)\big)$.
\end{theorem}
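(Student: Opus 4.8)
The plan is to reduce everything to Lemma~\ref{lem13}, which tells us that a zero set lies in the $Z_B$-ultrafilter $\mathscr A$ exactly when it meets every member of $\mathscr A$. Thus it suffices to prove that the condition ``$Z$ meets every member of $E_B\big(E_B^{-1}(\mathscr A)\big)$'' is equivalent to ``$Z$ meets every member of $\mathscr A$''. The forward implication of the stated equivalence is the easy half: if $Z \in \mathscr A$ and $W \in E_B\big(E_B^{-1}(\mathscr A)\big)$, then $W \in \mathscr A$ by Theorem~\ref{fact2}, so $Z \cap W$ is again a zero set lying in the filter $\mathscr A$ (intersection of two zero sets is a zero set, and $\mathscr A$ is closed under finite intersection) and hence is nonempty since $\emptyset \notin \mathscr A$. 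Therefore $Z$ meets $W$.

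For the converse I would argue by contraposition, producing, from the assumption $Z \notin \mathscr A$, a member of $E_B\big(E_B^{-1}(\mathscr A)\big)$ disjoint from $Z$. By Lemma~\ref{lem13}, $Z \notin \mathscr A$ yields a set $A \in \mathscr A$ with $Z \cap A = \emptyset$. I would write $Z = Z(k)$ and $A = Z(h)$ with $h,k \in B_1^*(X)$ and $h,k \geq 0$; this is legitimate because every zero set in $Z(B_1(X))$ is the zero set of such a nonnegative bounded function (for instance $Z(g) = Z(|g|\wedge 1)$). The disjointness $Z \cap A = \emptyset$ says precisely that $h$ and $k$ never vanish simultaneously, so $h+k > 0$ on all of $X$.

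The key construction, which I expect to be the crux, is the Urysohn-type separator $f = \frac{h}{h+k}$. Since $h+k$ is everywhere positive, the earlier-cited fact that the reciprocal of an everywhere-positive Baire one function is again Baire one gives $\frac{1}{h+k}\in B_1(X)$, whence $f \in B_1^*(X)$ with $0 \le f \le 1$; moreover $f \equiv 0$ on $A$ and $f \equiv 1$ on $Z$. I would then verify $f \in E_B^{-1}(\mathscr A)$: for every $\delta>0$ the zero set $E_B^\delta(f)$ contains $Z(f) \supseteq A \in \mathscr A$, so $E_B^\delta(f)\in \mathscr A$ by closure under supersets. Consequently $E_B^{1/2}(f) \in E_B\big(E_B^{-1}(\mathscr A)\big)$, yet $f \equiv 1$ on $Z$ forces $Z \cap E_B^{1/2}(f) = \emptyset$, giving the desired member of $E_B\big(E_B^{-1}(\mathscr A)\big)$ that misses $Z$ and completing the contrapositive. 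The only delicate point is manufacturing $f$ so that it is bounded away from the threshold on $Z$ while staying small on a set of $\mathscr A$; normalising by $h+k$ is exactly what pins $f$ to the value $1$ on $Z$ and $0$ on $A$, making both requirements hold simultaneously.
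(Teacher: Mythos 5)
Your proof is correct and follows essentially the same route as the paper: both directions reduce to Lemma~\ref{lem13}, and the converse is handled by taking a $Z'\in\mathscr A$ disjoint from $Z$, producing $f\in B_1^*(X)$ with $f\equiv 0$ on $Z'$ and $f\equiv 1$ on $Z$, noting $E_B^\delta(f)\supseteq Z'$ puts $f$ in $E_B^{-1}(\mathscr A)$, and observing that $E_B^{1/2}(f)$ misses $Z$. The only difference is that you construct the separating function explicitly as $h/(h+k)$, whereas the paper cites the complete-separation result from \cite{AA}; your construction is exactly the standard proof of that cited fact, so the arguments coincide.
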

\begin{proof}
	$\mathscr A$ is a $Z_B$-ultrafilter, so $E_B\big( E_B^{-1}(\mathscr A)\big) \subseteq \mathscr A$. If we assume $Z \in \mathscr A$ then $Z$ meets every member of $\mathscr A$ \cite{AA2} and hence $Z$ meets every member of $E_B\big( E_B^{-1}(\mathscr A)\big)$.\\
Conversely, suppose $Z$ meets every member of $E_B\big( E_B^{-1}(\mathscr A)\big)$. We claim that $Z$ intersects every member of $\mathscr A$. If not, there exists $Z' \in \mathscr A$ for which $Z \bigcap Z' = \emptyset$. Therefore $Z$ and $Z'$ are completely separated in $X$ by $B_1(X)$ \cite{AA} and there exists $f \in B_1^*(X)$ such that $f(Z)=1$ and $f(Z')=0$. Clearly for all $\epsilon > 0,$ $Z' \subseteq Z(f) \subseteq E_B^\epsilon(f)$. Now $Z' \in \mathscr A$ implies that $E_B^\epsilon(f) \in \mathscr A$, for all $\epsilon > 0$. If we choose $\epsilon < \frac{1}{2}$, then $Z \bigcap E_B^\epsilon(f) = \emptyset$, which contradicts that $Z$ meets every member of $E_B\big( E_B^{-1}(\mathscr A)\big)$. Hence, $Z$ meets every member of $\mathscr A$ and therefore $Z \in \mathscr A$.
\end{proof}
\begin{theorem} \label{thm0.15}
If $\mathscr A$ is any $Z_B$-ultrafilter on $X$ then $E_B^{-1}(\mathscr A)$ is a maximal ideal in $B_1^*(X)$. 
\end{theorem}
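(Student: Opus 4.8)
The plan is to prove maximality directly, using the standard criterion that a proper ideal $M$ of the commutative ring $B_1^*(X)$ is maximal exactly when, for each $f \in B_1^*(X)\setminus M$, the ideal generated by $M$ together with $f$ is all of $B_1^*(X)$; equivalently, there is a $g \in B_1^*(X)$ with $1-fg \in M$. First I would check that $M:=E_B^{-1}(\mathscr A)$ is a proper ideal: it is an ideal (indeed an $e_B$-ideal) by Theorem~\ref{thm2}, and it is proper because $E_B^\epsilon(1)=\emptyset$ for every $\epsilon<1$ whereas $\emptyset\notin\mathscr A$, so the constant function $1$ does not lie in $M$.

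Next, fix $f\in B_1^*(X)\setminus M$. By the description $M=\{h\in B_1^*(X): E_B^\epsilon(h)\in\mathscr A \text{ for all }\epsilon>0\}$, there is some $\epsilon_0>0$ with $E_B^{\epsilon_0}(f)\notin\mathscr A$. Since $E_B^{\epsilon_0}(f)=Z\big((|f|-\epsilon_0)\vee 0\big)$ is a genuine zero set, the contrapositive of Lemma~\ref{lem13} applies and yields a member $Z'\in\mathscr A$ with $E_B^{\epsilon_0}(f)\cap Z'=\emptyset$; in particular $|f(x)|>\epsilon_0$ for every $x\in Z'$, so $f$ is bounded away from $0$ on $Z'$.

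The heart of the proof is to build a bounded Baire one ``inverse'' of $f$. I would take the continuous truncated reciprocal $\phi:\mathbb{R}\to\mathbb{R}$ given by $\phi(t)=1/t$ for $|t|\ge\epsilon_0$ and $\phi(t)=t/\epsilon_0^2$ for $|t|\le\epsilon_0$, which satisfies $|\phi|\le 1/\epsilon_0$, and set $g=\phi\circ f$. Using Theorem~\ref{P1 thm_4.1}, $f^{-1}(\text{open})$ is $F_\sigma$, hence so is $g^{-1}(G)=f^{-1}\big(\phi^{-1}(G)\big)$ for every open $G$; since $X$ is normal this gives $g\in B_1(X,\mathbb{R})$, and $g$ is bounded, so $g\in B_1^*(X)$. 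Writing $\Psi(t)=t\phi(t)$ we have $fg=\Psi\circ f$ with $\Psi(t)=1$ for $|t|\ge\epsilon_0$, so $1-fg$ vanishes on $\{x:|f(x)|\ge\epsilon_0\}\supseteq Z'$. Thus $Z'\subseteq Z(1-fg)\subseteq E_B^\epsilon(1-fg)$ for every $\epsilon>0$, and since $Z'\in\mathscr A$ the superset condition for the $Z_B$-filter $\mathscr A$ forces $E_B^\epsilon(1-fg)\in\mathscr A$ for all $\epsilon>0$, i.e. $1-fg\in M$. Then $1=(1-fg)+fg$ lies in the ideal generated by $M$ and $f$, which is therefore all of $B_1^*(X)$; as $f$ was arbitrary, $M$ is maximal.

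I expect the only real obstacle to be the boundedness requirement. In $B_1(X)$ one could simply invert $f$ wherever it is nonzero, but in $B_1^*(X)$ the naive reciprocal may be unbounded and a function with empty zero set need not be a unit. The two devices that circumvent this are, first, replacing $1/f$ by the truncated reciprocal $\phi\circ f$ (automatically bounded, and Baire one via the $F_\sigma$-preimage characterization of Theorem~\ref{P1 thm_4.1}), and second, the observation that it suffices for $1-fg$ to vanish on the single member $Z'$ of $\mathscr A$ supplied by the ultrafilter property, rather than on all of $X$.
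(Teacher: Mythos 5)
Your proof is correct, but it follows a genuinely different route from the paper's. The paper proves maximality indirectly: it takes a maximal ideal $M^*$ containing $E_B^{-1}(\mathscr A)$, pushes forward via Lemma~\ref{lem0.11} to get $E_B\big(E_B^{-1}(\mathscr A)\big) \subseteq E_B(M^*)$, invokes Theorem~\ref{thm0.14} to conclude $E_B(M^*) \subseteq \mathscr A$, and then uses the fact that maximal ideals are $e_B$-ideals (Corollary~\ref{cor0.8}) to pull back and obtain $M^* \subseteq E_B^{-1}(\mathscr A)$. You instead verify the algebraic criterion for maximality head-on: given $f \notin E_B^{-1}(\mathscr A)$, you extract $\epsilon_0$ with $E_B^{\epsilon_0}(f) \notin \mathscr A$, use the contrapositive of Lemma~\ref{lem13} to find $Z' \in \mathscr A$ disjoint from it, and build the truncated reciprocal $g = \phi\circ f$ so that $1-fg$ vanishes on $Z'$, whence $1-fg \in E_B^{-1}(\mathscr A)$. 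All the steps check out: $\phi$ is continuous with the endpoint values matching at $\pm\epsilon_0$, so $g$ is a bounded Baire one function (indeed composition with a continuous map preserves Baire class one even without the $F_\sigma$ detour, though normality is standing in this section anyway), and the superset axiom for the $Z_B$-filter $\mathscr A$ gives $E_B^\epsilon(1-fg) \in \mathscr A$ for every $\epsilon>0$. Your argument is the Gillman--Jerison-style direct proof adapted to $B_1^*(X)$, and it is self-contained modulo Theorem~\ref{thm2} and Lemma~\ref{lem13}; in particular it bypasses Theorem~\ref{thm0.14} and the complete-separation machinery behind it. What the paper's approach buys is economy within its own framework --- the same lemmas are reused immediately afterwards for Theorem~\ref{thm0.19} and the cardinality result --- while yours buys transparency: it exhibits explicitly why no proper ideal can sit strictly above $E_B^{-1}(\mathscr A)$, and it closely parallels the proof of the paper's final theorem characterizing maximal ideals of $B_1^*(X)$.
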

\begin{proof}
We know from Theorem \ref{thm2}  that $E_B^{-1}(\mathscr A)$ is an ideal in $B_1^*(X)$. Let $M^*$ be a maximal ideal in $B_1^*(X)$ which contains $E_B^{-1}(\mathscr A)$. By using Lemma \ref{lem0.11} we can write 	$E_B\big(E_B^{-1}(\mathscr A)\big) \subseteq E_B(M^*)$. Theorem \ref{thm1} asserts that $E_B(M^*)$ is a $Z_B$-filter on $X$, therefore every member of $E_B(M^*)$ meets every member of $E_B\big(E_B^{-1}(\mathscr A)\big)$. By Theorem \ref{thm0.14} every member of $E_B(M^*)$ belongs to $\mathscr A$. Hence $E_B(M^*) \subseteq \mathscr A$. Every maximal ideal is an $e_B$-ideal, so $M^*= E_B^{-1}\big(E_B(M^*)\big) \subseteq E_B^{-1}(\mathscr A)$. This implies $M^*= E_B^{-1}(\mathscr A)$ which completes the proof.
\end{proof}

\begin{corollary}
	 For any $Z_B$-ultrafilter $\mathscr{A}$ on $X$, $E_B^{-1}(\mathscr A)= E_B^{-1}\big(E_B\big( E_B^{-1}(\mathscr A) \big) \big)$.
\end{corollary}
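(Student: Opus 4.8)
The plan is to recognize the right-hand side as nothing more than the $e_B$-ideal closure of $E_B^{-1}(\mathscr A)$ and then argue that $E_B^{-1}(\mathscr A)$ is \emph{already} an $e_B$-ideal, so that the closure operation changes nothing. Concretely, I would set $I = E_B^{-1}(\mathscr A)$; the assertion to be proved is precisely $I = E_B^{-1}\big(E_B(I)\big)$, which by the definition of an $e_B$-ideal is just the statement that $I$ is an $e_B$-ideal.

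First I would invoke Theorem~\ref{thm2}. A $Z_B$-ultrafilter is in particular a $Z_B$-filter, so taking $\mathscr F = \mathscr A$ in that theorem immediately gives that $E_B^{-1}(\mathscr A)$ is an $e_B$-ideal. Unwinding the definition of an $e_B$-ideal then yields $E_B^{-1}(\mathscr A) = E_B^{-1}\big(E_B\big(E_B^{-1}(\mathscr A)\big)\big)$, which is exactly the claim.

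Alternatively, and in the spirit of the immediately preceding results on ultrafilters, I could route through Theorem~\ref{thm0.15}: there $E_B^{-1}(\mathscr A)$ is shown to be a maximal ideal of $B_1^*(X)$, while Corollary~\ref{cor0.8} records that every maximal ideal is an $e_B$-ideal. Combining these two facts again gives $I = E_B^{-1}\big(E_B(I)\big)$, the desired identity.

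Since both routes reduce the corollary to a one-line application of an already-established theorem together with the definition of an $e_B$-ideal, I do not anticipate any genuine obstacle; the corollary is essentially a restatement of $e_B$-ideality of $E_B^{-1}(\mathscr A)$. The only point requiring care is the trivial but necessary remark that a $Z_B$-ultrafilter qualifies as a $Z_B$-filter, so that Theorem~\ref{thm2} (or the pair Theorem~\ref{thm0.15} and Corollary~\ref{cor0.8}) applies verbatim.
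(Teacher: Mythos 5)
Your proposal is correct and matches the paper's evident intent: the corollary is stated without proof immediately after Theorem~\ref{thm0.15}, and your second route (maximality of $E_B^{-1}(\mathscr A)$ via Theorem~\ref{thm0.15} plus Corollary~\ref{cor0.8}) is exactly the one-line derivation the authors have in mind. Your first route via Theorem~\ref{thm2} is equally valid and in fact slightly stronger, since it shows the identity $E_B^{-1}(\mathscr F)= E_B^{-1}\big(E_B\big(E_B^{-1}(\mathscr F)\big)\big)$ holds for every $Z_B$-filter $\mathscr F$, not only for ultrafilters.
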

\begin{definition}
	An $e_B$-filter is called an $e_B$-ultrafilter if it is not contained in any other $e_B$-filter. In other words a maximal $e_B$-filter is called an $e_B$-ultrafilter.
\end{definition}
\noindent Using Zorn's lemma one can establish that, every $e_B$-filter is contained in an $e_B$-ultrafilter.
\begin{theorem} \label{thm0.19}
	If $M^*$ is a maximal ideal in $B_1^*(X)$ and $\mathscr F$ is an $e_B$-ultrafilter on $X$ then \\
	a) $E_B(M^*)$ is an $e_B$-ultrafilter on $X$.\\
	b) $E_B^{-1}(\mathscr F)$ is a maximal ideal in $B_1^*(X)$.
\end{theorem}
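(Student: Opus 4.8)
The plan is to prove both parts by leveraging the correspondence machinery already built up, especially Theorem~\ref{thm0.15} and the extremal characterizations of $E_B$ and $E_B^{-1}$ (the ``largest $e_B$-filter contained in'' and ``smallest $e_B$-ideal containing'' theorems). The key structural fact I would exploit throughout is that on a normal space every $Z_B$-ultrafilter $\mathscr A$ gives a maximal ideal $E_B^{-1}(\mathscr A)$ in $B_1^*(X)$, and conversely every maximal ideal of $B_1^*(X)$ is an $e_B$-ideal (Corollary~\ref{cor0.8}) that arises this way. So both halves reduce to showing that the relevant $e_B$-filters and maximal ideals are the $E_B$-image, respectively $E_B^{-1}$-image, of a genuine $Z_B$-ultrafilter.

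For part (a), I would start with a maximal ideal $M^*$ in $B_1^*(X)$. By Corollary~\ref{cor0.8}, $M^*$ is an $e_B$-ideal, and by Theorem~\ref{thm1}, $E_B(M^*)$ is an $e_B$-filter; the content is to show it is \emph{maximal} among $e_B$-filters. Suppose $\mathscr F$ is an $e_B$-filter with $E_B(M^*) \subseteq \mathscr F$. Applying $E_B^{-1}$ (which preserves inclusion) and using that $M^*$ is an $e_B$-ideal gives $M^* = E_B^{-1}(E_B(M^*)) \subseteq E_B^{-1}(\mathscr F)$; by Theorem~\ref{thm2} the right side is an $e_B$-ideal, and since $M^*$ is maximal among ideals, either $E_B^{-1}(\mathscr F) = M^*$ or $E_B^{-1}(\mathscr F) = B_1^*(X)$. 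The latter forces $\mathscr F$ to contain $E_B^\epsilon(1)$ for all $\epsilon$, hence $\emptyset \in \mathscr F$, contradicting that $\mathscr F$ is a filter. So $E_B^{-1}(\mathscr F) = M^*$, and applying $E_B$ together with the $e_B$-filter identity $\mathscr F = E_B(E_B^{-1}(\mathscr F))$ yields $\mathscr F = E_B(M^*)$. Thus $E_B(M^*)$ is an $e_B$-ultrafilter.

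For part (b), I would begin with an $e_B$-ultrafilter $\mathscr F$ and show $E_B^{-1}(\mathscr F)$ is maximal. By Theorem~\ref{thm2} it is an $e_B$-ideal, so embed it in a maximal ideal $M^*$ of $B_1^*(X)$. Then $E_B^{-1}(\mathscr F) \subseteq M^*$ gives, via $E_B$ and Lemma~\ref{lem0.11} (applicable since $M^*$ is an $e_B$-ideal), the inclusion $\mathscr F = E_B(E_B^{-1}(\mathscr F)) \subseteq E_B(M^*)$, where the equality uses that $\mathscr F$ is an $e_B$-filter. By part (a), $E_B(M^*)$ is an $e_B$-ultrafilter, so maximality of $\mathscr F$ forces $\mathscr F = E_B(M^*)$. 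Applying $E_B^{-1}$ and using that $M^*$ is an $e_B$-ideal gives $E_B^{-1}(\mathscr F) = E_B^{-1}(E_B(M^*)) = M^*$, so $E_B^{-1}(\mathscr F)$ is maximal.

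The main obstacle I anticipate is bookkeeping the direction-of-inclusion arguments cleanly, specifically making sure each application of the identities $M^* = E_B^{-1}(E_B(M^*))$ and $\mathscr F = E_B(E_B^{-1}(\mathscr F))$ is licensed (the first needs $M^*$ to be an $e_B$-ideal, the second needs $\mathscr F$ to be an $e_B$-filter) and that the ``collapse'' steps genuinely use maximality rather than mere properness. A subtle point worth stating explicitly is the ruling-out of the improper case $E_B^{-1}(\mathscr F) = B_1^*(X)$ in part (a): one must verify that $E_B^{-1}$ of a proper $e_B$-filter cannot be the whole ring, which follows because $\emptyset \notin \mathscr F$ prevents the unit $1$ (whose $E_B^\epsilon$-sets are empty for small $\epsilon$) from lying in $E_B^{-1}(\mathscr F)$. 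Beyond that, the argument is essentially a diagram-chase through the Galois-type correspondence already established, so once the extremal lemmas are invoked correctly the two parts fall out symmetrically.
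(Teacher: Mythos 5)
Your proof is correct and follows essentially the same route as the paper: both parts are diagram-chases through the $E_B$/$E_B^{-1}$ correspondence using Corollary~\ref{cor0.8}, Theorems~\ref{thm1} and~\ref{thm2}, and the identities $M^*=E_B^{-1}(E_B(M^*))$ and $\mathscr F=E_B(E_B^{-1}(\mathscr F))$. Your explicit ruling-out of the case $E_B^{-1}(\mathscr F)=B_1^*(X)$ in part (a) is a point the paper leaves implicit, but otherwise the arguments coincide.
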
 
\begin{proof}
	a) Since $M^*$ is a maximal ideal in $B_1^*(X)$ then by Corollary \ref{cor0.8} we have $M^*=E_B^{-1}\big(E_B(M^*)\big)$. Suppose the $e_B$-filter $E_B(M^*)$ is contained in an $e_B$-ultrafilter $\mathscr F'$. So $E_B(M^*) \subseteq \mathscr F' \implies  E_B^{-1}\big(E_B(M^*)\big)=M^* \subseteq E_B^{-1}(\mathscr F') \implies M^* = E_B^{-1}(\mathscr F')$ (since $M^*$ is maximal ideal). Therefore, $E_B(M^*)= E_B\big(E_B^{-1}(\mathscr F')\big) = \mathscr F'$. Hence, $E_B(M^*)$ is an $e_B$-ultrafilter.\\
	b) Let $M^*$ be any maximal extension of the ideal $E_B^{-1}(\mathscr F)$ in $B_1^*(X)$. Now $E_B^{-1}(\mathscr F) \subseteq M^* \implies \mathscr F= E_B\big( E_B^{-1}(\mathscr F)\big) \subseteq E_B(M^*)$. By part (a) we can conclude that $\mathscr F = E_B(M^*)$, which gives us $E_B^{-1}(\mathscr F)=M^*$. Hence we are done.
\end{proof}

\begin{corollary}
	Let $M^*$ be an $e_B$-ideal, then $M^*$ is maximal in $B_1^*(X)$ if and only if $E_B(M^*)$ is an $e_B$-ultrafilter.
\end{corollary}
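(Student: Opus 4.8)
The plan is to recognize this corollary as a direct repackaging of Theorem~\ref{thm0.19}, which already supplies both implications; no new construction is needed, only a careful assembly of the two parts together with the defining property of an $e_B$-ideal. I would handle the two directions separately, and I would emphasize at the outset that the roles of the $e_B$-ideal hypothesis differ between them.

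For the forward implication, assume $M^*$ is a maximal ideal in $B_1^*(X)$. The hypothesis that $M^*$ be an $e_B$-ideal is then automatic, since Corollary~\ref{cor0.8} guarantees that every maximal ideal is an $e_B$-ideal; this means the statement is really asserting that maximality forces $E_B(M^*)$ to be an $e_B$-ultrafilter. But that is exactly part (a) of Theorem~\ref{thm0.19}, so this direction follows immediately with nothing further to prove.

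For the converse, suppose $M^*$ is an $e_B$-ideal and that $E_B(M^*)$ is an $e_B$-ultrafilter. I would set $\mathscr F = E_B(M^*)$ and apply part (b) of Theorem~\ref{thm0.19} to the $e_B$-ultrafilter $\mathscr F$: this yields that $E_B^{-1}(\mathscr F) = E_B^{-1}\big(E_B(M^*)\big)$ is a maximal ideal in $B_1^*(X)$. Here is precisely where the $e_B$-ideal hypothesis on $M^*$ is indispensable: by the definition of an $e_B$-ideal we have $M^* = E_B^{-1}\big(E_B(M^*)\big)$, so $M^*$ coincides with a maximal ideal and is therefore itself maximal.

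I do not anticipate a genuine obstacle, since the argument is purely formal once Theorem~\ref{thm0.19} is in hand; the only point requiring care is the logical bookkeeping, namely observing that the $e_B$-ideal condition is redundant in the forward direction (being implied by maximality via Corollary~\ref{cor0.8}) but is exactly what is needed in the converse to pass from the maximality of $E_B^{-1}\big(E_B(M^*)\big)$ back to the maximality of $M^*$ itself.
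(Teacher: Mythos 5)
Your argument is correct and is exactly the ``straightforward'' deduction the paper intends: the forward direction is Theorem~\ref{thm0.19}(a), and the converse applies Theorem~\ref{thm0.19}(b) to $\mathscr F = E_B(M^*)$ together with the identity $M^* = E_B^{-1}\big(E_B(M^*)\big)$ supplied by the $e_B$-ideal hypothesis. Your observation that this hypothesis is redundant in the forward direction (via Corollary~\ref{cor0.8}) but essential in the converse is an accurate and worthwhile piece of bookkeeping.
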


\begin{corollary}
	An $e_B$-filter $\mathscr F$ is an $e_B$-ultrafilter if and only if $E_B^{-1}(\mathscr F)$ is a maximal ideal in $B_1^*(X)$.
\end{corollary}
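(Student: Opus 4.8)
The plan is to prove both directions of the equivalence by leveraging the already-established correspondence machinery, particularly Theorem~\ref{thm0.19}, which does almost all of the heavy lifting. The statement to prove is: an $e_B$-filter $\mathscr F$ is an $e_B$-ultrafilter if and only if $E_B^{-1}(\mathscr F)$ is a maximal ideal in $B_1^*(X)$. First I would settle the forward direction. Assuming $\mathscr F$ is an $e_B$-ultrafilter, I would apply part (b) of Theorem~\ref{thm0.19} directly, which states precisely that $E_B^{-1}(\mathscr F)$ is a maximal ideal in $B_1^*(X)$. So this direction is essentially immediate from the prior theorem.

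For the converse, suppose $E_B^{-1}(\mathscr F)$ is a maximal ideal in $B_1^*(X)$. I would set $M^* = E_B^{-1}(\mathscr F)$ and aim to recover $\mathscr F$ as $E_B(M^*)$, then invoke part (a) of Theorem~\ref{thm0.19} to conclude that $E_B(M^*)$ is an $e_B$-ultrafilter. The crucial observation is that $\mathscr F$ is assumed to be an $e_B$-filter, so by definition $\mathscr F = E_B\bigl(E_B^{-1}(\mathscr F)\bigr) = E_B(M^*)$. Since $M^*$ is a maximal ideal, Theorem~\ref{thm0.19}(a) guarantees that $E_B(M^*)$ is an $e_B$-ultrafilter, and because this equals $\mathscr F$, we are done.

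The step I expect to require the most care is ensuring that the identity $\mathscr F = E_B\bigl(E_B^{-1}(\mathscr F)\bigr)$ is legitimately available, i.e. that $\mathscr F$ is genuinely an $e_B$-filter and not merely a $Z_B$-filter; but this is guaranteed by the hypothesis of the statement, since the theorem concerns an $e_B$-filter $\mathscr F$ from the outset. The essential content of the proof is therefore just a matter of correctly chaining the definition of an $e_B$-filter with the two parts of Theorem~\ref{thm0.19}, so the argument is short. Because the surrounding corollaries (such as the immediately preceding one characterizing maximal $e_B$-ideals via $e_B$-ultrafilters) are proved by a one-word ``Straightforward,'' I anticipate this corollary admits an equally brief justification, and I would likely present it as a direct consequence of Theorem~\ref{thm0.19} together with the defining equation of an $e_B$-filter.
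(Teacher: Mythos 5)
Your proof is correct and is exactly the intended argument: the paper itself dismisses this corollary as ``Straightforward,'' and the natural justification is precisely your chaining of Theorem~\ref{thm0.19}(b) for the forward direction with the defining identity $\mathscr F = E_B\bigl(E_B^{-1}(\mathscr F)\bigr)$ plus Theorem~\ref{thm0.19}(a) for the converse. No gaps.
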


\begin{corollary} \label{cor0.22}
	The correspondence $M^* \mapsto E_B(M^*)$ is one-one from the set of all maximal ideals in $B_1^*(X)$ onto the set of all $e_B$-ultrafilters. 
\end{corollary}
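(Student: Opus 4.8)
The plan is to assemble this bijectivity statement entirely from the structural results already established, since all the analytic content lives in Theorem~\ref{thm1}, Theorem~\ref{thm0.19}, and Corollary~\ref{cor0.8}. Three things need to be checked: that the assignment $M^* \mapsto E_B(M^*)$ actually lands in the set of $e_B$-ultrafilters, that it is injective, and that it is surjective onto the $e_B$-ultrafilters. I expect no genuine obstacle, as each piece is a direct consequence of a mutual-inverse relation proved earlier.

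First I would verify well-definedness of the target by invoking Theorem~\ref{thm0.19}(a) verbatim: whenever $M^*$ is a maximal ideal of $B_1^*(X)$, the image $E_B(M^*)$ is an $e_B$-ultrafilter. Hence the correspondence does carry $\MMM(B_1^*(X))$ into the collection of $e_B$-ultrafilters.

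For injectivity, I would take maximal ideals $M_1^*$ and $M_2^*$ with $E_B(M_1^*) = E_B(M_2^*)$. By Corollary~\ref{cor0.8} each maximal ideal is an $e_B$-ideal, so $M_i^* = E_B^{-1}\big(E_B(M_i^*)\big)$ for $i = 1, 2$. Applying $E_B^{-1}$ to the assumed equality then yields $M_1^* = E_B^{-1}\big(E_B(M_1^*)\big) = E_B^{-1}\big(E_B(M_2^*)\big) = M_2^*$, which is exactly injectivity. For surjectivity, I would start from an arbitrary $e_B$-ultrafilter $\mathscr F$ and set $M^* = E_B^{-1}(\mathscr F)$; Theorem~\ref{thm0.19}(b) guarantees this is a maximal ideal of $B_1^*(X)$. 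Since an $e_B$-ultrafilter is in particular an $e_B$-filter, it satisfies $\mathscr F = E_B\big(E_B^{-1}(\mathscr F)\big) = E_B(M^*)$, exhibiting $\mathscr F$ as the image of $M^*$.

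The only point deserving a moment of care — and the closest thing to an obstacle — is recognizing that the two identities $M^* = E_B^{-1}(E_B(M^*))$ on maximal (hence $e_B$-) ideals and $\mathscr F = E_B(E_B^{-1}(\mathscr F))$ on $e_B$-filters are precisely what make $E_B$ and $E_B^{-1}$ inverse bijections between the two special classes. Once both identities are in hand, injectivity and surjectivity follow mechanically, and the corollary is complete.
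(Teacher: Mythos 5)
Your proof is correct and follows exactly the route the paper intends: the paper states this corollary without proof, treating it as immediate from Theorem~\ref{thm0.19} together with the identities $M^* = E_B^{-1}\big(E_B(M^*)\big)$ (via Corollary~\ref{cor0.8}) and $\mathscr F = E_B\big(E_B^{-1}(\mathscr F)\big)$ (the definition of $e_B$-filter), which is precisely the argument you spell out.
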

\begin{theorem} \label{thm0.23}
	If $\mathscr A$ is a $Z_B$-ultrafilter then it is the unique $Z_B$-ultrafilter containing $E_B\big( E_B^{-1} (\mathscr A)\big)$. In fact, $E_B\big( E_B^{-1} (\mathscr A)\big)$ is the unique $e_B$-ultrafilter contained in $\mathscr A$.
\end{theorem}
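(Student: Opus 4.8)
The plan is to write $\mathscr E = E_B\big(E_B^{-1}(\mathscr A)\big)$ throughout and first pin down exactly what kind of object $\mathscr E$ is, before attacking either uniqueness claim. Since $\mathscr A$ is in particular a $Z_B$-filter, the earlier theorem identifying $E_B\big(E_B^{-1}(\mathscr F)\big)$ as the largest $e_B$-filter contained in $\mathscr F$ immediately gives that $\mathscr E$ is an $e_B$-filter with $\mathscr E \subseteq \mathscr A$. To upgrade this to an \emph{ultra}filter I would invoke Theorem~\ref{thm0.15}: because $\mathscr A$ is a $Z_B$-ultrafilter, $M^* := E_B^{-1}(\mathscr A)$ is a maximal ideal in $B_1^*(X)$. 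Then Theorem~\ref{thm0.19}(a) applied to this $M^*$ tells us that $E_B(M^*) = \mathscr E$ is an $e_B$-ultrafilter. So after this first step I have: $\mathscr E$ is an $e_B$-ultrafilter contained in $\mathscr A$, which already proves the existence half of the second assertion.

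For the uniqueness in the second assertion, I would argue by maximality. Suppose $\mathscr E'$ is any $e_B$-ultrafilter with $\mathscr E' \subseteq \mathscr A$. Then $\mathscr E'$ is an $e_B$-filter contained in $\mathscr A$, and since $\mathscr E$ is the \emph{largest} $e_B$-filter contained in $\mathscr A$, we get $\mathscr E' \subseteq \mathscr E$. But $\mathscr E'$ is an $e_B$-ultrafilter, i.e.\ maximal among $e_B$-filters, and $\mathscr E$ is an $e_B$-filter containing it; hence $\mathscr E' = \mathscr E$. This settles that $\mathscr E$ is the unique $e_B$-ultrafilter contained in $\mathscr A$.

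The first assertion, uniqueness of the $Z_B$-ultrafilter containing $\mathscr E$, is where I expect the real work, and the key tool is Theorem~\ref{thm0.14}. Let $\mathscr A'$ be any $Z_B$-ultrafilter with $\mathscr E \subseteq \mathscr A'$; the goal is $\mathscr A' = \mathscr A$. First I would show $E_B\big(E_B^{-1}(\mathscr A')\big) = \mathscr E$: since $\mathscr E$ is an $e_B$-filter contained in $\mathscr A'$ and $E_B\big(E_B^{-1}(\mathscr A')\big)$ is the largest $e_B$-filter contained in $\mathscr A'$, we have $\mathscr E \subseteq E_B\big(E_B^{-1}(\mathscr A')\big)$; but $\mathscr E$ is an $e_B$-ultrafilter, so maximality forces equality. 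Thus both $\mathscr A$ and $\mathscr A'$ satisfy $E_B\big(E_B^{-1}(\cdot)\big) = \mathscr E$. Now Theorem~\ref{thm0.14}, applied to each of the two $Z_B$-ultrafilters, characterizes membership purely in terms of this common object: a zero set $Z$ lies in $\mathscr A$ iff $Z$ meets every member of $\mathscr E$, and likewise $Z$ lies in $\mathscr A'$ iff $Z$ meets every member of $\mathscr E$. The two membership criteria are identical, so $\mathscr A = \mathscr A'$.

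The main obstacle is the step showing $E_B\big(E_B^{-1}(\mathscr A')\big) = \mathscr E$ for the competing ultrafilter $\mathscr A'$; everything else reduces to maximality bookkeeping and a direct appeal to the characterization Theorem~\ref{thm0.14}. Once that equality is in hand, the ``meets every member of $\mathscr E$'' description makes both $Z_B$-ultrafilters coincide, and the final sentence of the statement is just the conjunction of the two uniqueness results already obtained.
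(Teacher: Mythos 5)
Your proposal is correct and follows essentially the same route as the paper: Theorem~\ref{thm0.15} together with Theorem~\ref{thm0.19}(a) to identify $E_B\big(E_B^{-1}(\mathscr A)\big)$ as an $e_B$-ultrafilter, maximality bookkeeping for the uniqueness of the $e_B$-ultrafilter, and Theorem~\ref{thm0.14} for the uniqueness of the $Z_B$-ultrafilter. The only (harmless) difference is in the last step: the paper applies Theorem~\ref{thm0.14} once to $\mathscr A$ to get $\mathscr A^*\subseteq\mathscr A$ and then invokes maximality of both ultrafilters, whereas you first prove $E_B\big(E_B^{-1}(\mathscr A')\big)=E_B\big(E_B^{-1}(\mathscr A)\big)$ and apply the characterization symmetrically to both.
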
 
\begin{proof}
	Let $\mathscr A^*$ be a $Z_B$-ultrafilter containing $E_B\big( E_B^{-1} (\mathscr A)\big)$ and $Z \in \mathscr A ^*$. Clearly $Z$ meets every member of $\mathscr A^*$ and so it meets every member of $E_B\big( E_B^{-1} (\mathscr A)\big)$. By Theorem \ref{thm0.14} $Z \in \mathscr A$ and $\mathscr A^* \subseteq \mathscr A$. Since both $\mathscr A$ and $\mathscr A^*$ are $Z_B$-ultrafilters, hence, we have $\mathscr A=\mathscr A^*$. So $\mathscr A$ is unique one containing $E_B\big( E_B^{-1} (\mathscr A)\big)$.\\
	For the second part, $E_B\big( E_B^{-1} (\mathscr A)\big)$ is $e_B$-ultrafilter follows from Theorem \ref{thm0.15} and Theorem \ref{thm0.19}. To prove the uniqueness we suppose $\mathscr E$ be any $e_B$-ultrafilter contained in $\mathscr A$. Then $\mathscr E \subseteq \mathscr A \implies E_B\big(E_B^{-1}(\mathscr E) \big)=\mathscr E \subseteq E_B\big(E_B^{-1}(\mathscr A) \big) \implies \mathscr E=E_B\big(E_B^{-1}(\mathscr A) \big)$. This completes the proof.
\end{proof}
\begin{corollary} \label{cor0.24}
	Every $e_B$-ultrafilter is contained in a unique $Z_B$-ultrafilter.
\end{corollary}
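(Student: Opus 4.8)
The plan is to derive this corollary directly from Theorem~\ref{thm0.23}, which already packages the two ingredients we need. First I would dispose of existence. An $e_B$-ultrafilter $\mathscr E$ is in particular an $e_B$-filter, hence a $Z_B$-filter, and every $Z_B$-filter is contained in some $Z_B$-ultrafilter by the usual Zorn's lemma argument applied to the poset of $Z_B$-filters ordered by inclusion (the union of a chain of $Z_B$-filters is again a $Z_B$-filter). So at least one $Z_B$-ultrafilter containing $\mathscr E$ exists, and only uniqueness requires work.

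For uniqueness, suppose $\mathscr A_1$ and $\mathscr A_2$ are two $Z_B$-ultrafilters each containing $\mathscr E$. The key observation is that $\mathscr E$, being an $e_B$-ultrafilter sitting inside $\mathscr A_1$, must coincide with the canonical $e_B$-ultrafilter attached to $\mathscr A_1$. Indeed, the second assertion of Theorem~\ref{thm0.23} states that $E_B\big(E_B^{-1}(\mathscr A_1)\big)$ is the \emph{unique} $e_B$-ultrafilter contained in $\mathscr A_1$; since $\mathscr E \subseteq \mathscr A_1$ is itself an $e_B$-ultrafilter, this forces $\mathscr E = E_B\big(E_B^{-1}(\mathscr A_1)\big)$.

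Now I would feed this back into the first assertion of Theorem~\ref{thm0.23}, which says that $\mathscr A_1$ is the unique $Z_B$-ultrafilter containing $E_B\big(E_B^{-1}(\mathscr A_1)\big) = \mathscr E$. Since $\mathscr A_2$ is also a $Z_B$-ultrafilter containing $\mathscr E$, this uniqueness immediately yields $\mathscr A_2 = \mathscr A_1$, so the $Z_B$-ultrafilter extending $\mathscr E$ is unique.

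There is essentially no hard step here once Theorem~\ref{thm0.23} is available; the corollary is little more than a repackaging of its two halves. The only point to handle with care is the logical order of the two uniqueness statements: one must first use uniqueness of the $e_B$-ultrafilter \emph{inside} a fixed $Z_B$-ultrafilter to identify $\mathscr E$ with $E_B\big(E_B^{-1}(\mathscr A_i)\big)$, and only afterwards invoke uniqueness of the $Z_B$-ultrafilter \emph{containing} a fixed $e_B$-ultrafilter. Getting these in the wrong order would make the argument appear circular, whereas in the correct order each step is exactly one of the two conclusions of Theorem~\ref{thm0.23}.
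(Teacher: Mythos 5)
Your proof is correct and is exactly the intended derivation: the paper gives no explicit argument for this corollary, presenting it as an immediate consequence of Theorem~\ref{thm0.23}, and your two-step use of that theorem (first identifying $\mathscr E$ with $E_B\bigl(E_B^{-1}(\mathscr A_1)\bigr)$ via the uniqueness of the $e_B$-ultrafilter inside $\mathscr A_1$, then invoking the uniqueness of the $Z_B$-ultrafilter containing it) fills in precisely the steps the authors leave implicit. The existence half via Zorn's lemma is also the standard argument the paper relies on elsewhere.
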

\begin{proof}
	Let $\mathscr{E}$ be an $e_B$-ultrafilter which is contained in two $Z_B$-ultrafilters $\mathscr{F}_1$ and $\mathscr{F}_2$. Therefore, $\mathscr{E}= E_B\big(E_B^{-1}(\mathscr E)) \subseteq E_B\big(E_B^{-1}(\mathscr F_1)) \subseteq \mathscr{F}_1.$ Now by Theorem \ref{thm0.15}, Theorem \ref{thm0.19} (a) and Theorem \ref{thm0.23} $E_B\big(E_B^{-1}(\mathscr F_1))$ is the unique $e_B$-ultrafilter contained in $\mathscr{F}_1$. Hence, $\mathscr{E}=E_B\big(E_B^{-1}(\mathscr F_1))$. Similarly, we have $\mathscr{E}=E_B\big(E_B^{-1}(\mathscr F_2))$. Therefore, $E_B\big(E_B^{-1}(\mathscr F_1))=E_B\big(E_B^{-1}(\mathscr F_2))$ is contained in both $\mathscr{F}_1$ and $\mathscr{F}_2$. Now let $Z \in \mathscr{F}_1$. Then $\mathscr{F}_1$ being a $Z_B$-ultrafilter, $Z$ intersects every member of $E_B\big(E_B^{-1}(\mathscr F_2))$. So by Theorem \ref{thm0.14}, $Z \in \mathscr{F}_2$. This gives us $\mathscr{F}_1 \subseteq \mathscr{F}_2$. By a similar argument, we can show $\mathscr{F}_2 \subseteq \mathscr{F}_1$. Hence, $\mathscr{F}_1 = \mathscr{F}_2$
\end{proof}
\begin{corollary} \label{cor0.25}
	There is a one to one correspondence between the collection of all $Z_B$-ultrafilters on $X$ and the collection of all $e_B$-ultrafilters on $X$. 
\end{corollary}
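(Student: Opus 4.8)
The plan is to exhibit an explicit bijection and then verify that it is well-defined, injective, and surjective, drawing almost entirely on Theorem~\ref{thm0.23} and Corollary~\ref{cor0.24}. I would define the map $\Phi$ from the collection of all $Z_B$-ultrafilters to the collection of all $e_B$-ultrafilters by $\Phi(\mathscr A) = E_B\big(E_B^{-1}(\mathscr A)\big)$. That $\Phi$ is well-defined — i.e. that $\Phi(\mathscr A)$ genuinely is an $e_B$-ultrafilter — is precisely the assertion in the second part of Theorem~\ref{thm0.23}, so no fresh argument is needed at this stage.

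For injectivity, I would suppose $\Phi(\mathscr A_1) = \Phi(\mathscr A_2)$. By Theorem~\ref{thm0.23}, each $\mathscr A_i$ is the \emph{unique} $Z_B$-ultrafilter containing $E_B\big(E_B^{-1}(\mathscr A_i)\big) = \Phi(\mathscr A_i)$. Since the two $e_B$-ultrafilters $\Phi(\mathscr A_1)$ and $\Phi(\mathscr A_2)$ coincide, the uniqueness clause forces $\mathscr A_1$ and $\mathscr A_2$ to be the same $Z_B$-ultrafilter, giving injectivity.

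For surjectivity, I would take an arbitrary $e_B$-ultrafilter $\mathscr F$. By Corollary~\ref{cor0.24}, $\mathscr F$ is contained in a unique $Z_B$-ultrafilter $\mathscr A$. The second part of Theorem~\ref{thm0.23} then says that $E_B\big(E_B^{-1}(\mathscr A)\big)$ is the unique $e_B$-ultrafilter contained in $\mathscr A$; but $\mathscr F$ is itself an $e_B$-ultrafilter contained in $\mathscr A$, so by that uniqueness $\mathscr F = E_B\big(E_B^{-1}(\mathscr A)\big) = \Phi(\mathscr A)$. Hence $\mathscr F$ lies in the image of $\Phi$, and $\Phi$ is onto.

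Since essentially all of the analytic content has already been packaged into Theorem~\ref{thm0.14}, Theorem~\ref{thm0.23}, and Corollary~\ref{cor0.24}, the corollary reduces to careful bookkeeping with those uniqueness statements. I expect the only point requiring any attention is to keep straight which uniqueness is being invoked at each step — the ``unique $e_B$-ultrafilter inside a given $Z_B$-ultrafilter'' versus the ``unique $Z_B$-ultrafilter containing a given $e_B$-ultrafilter'' — and I anticipate this to be the sole (minor) obstacle rather than any substantive difficulty.
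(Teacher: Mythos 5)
Your proposal is correct and follows essentially the same route as the paper: the same map $\mathscr A \mapsto E_B\big(E_B^{-1}(\mathscr A)\big)$, with surjectivity handled exactly as in the paper via Corollary~\ref{cor0.24} and the uniqueness clause of Theorem~\ref{thm0.23}. Your injectivity step is in fact slightly cleaner, since you invoke the first uniqueness assertion of Theorem~\ref{thm0.23} directly, whereas the paper re-derives it on the spot by appealing to Theorem~\ref{thm0.14}.
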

\begin{proof}
	Let us define a map ${\lambda}$ from the set of all $Z_B$-ultrafilters on $X$ to the set of all $e_B$-ultrafilters on $X$ by $\lambda(\mathscr A)= E_B\big( E_B^{-1} (\mathscr A)\big) $. We shall show that the map is one-one and onto. To show the injectivity, suppose $ \lambda(\mathscr A) = \lambda(\mathscr B)$, which implies $E_B\big( E_B^{-1} (\mathscr A)\big)=E_B\big( E_B^{-1} (\mathscr B)\big)$. By Theorem \ref{thm0.23} we can say $E_B\big( E_B^{-1} (\mathscr A)\big)=E_B\big( E_B^{-1} (\mathscr B)\big)$ is contained in both $\mathscr A$ and $\mathscr B$. Now let $Z \in \mathscr A$. So it intersects every member of $E_B\big( E_B^{-1} (\mathscr B)\big)$ and by Theorem \ref{thm0.14} $Z \in \mathscr B$. This gives us $\mathscr A \subseteq \mathscr B$. By a similar argument we can show that $\mathscr B \subseteq \mathscr A$. Hence, $\mathscr A = \mathscr B$ and $\lambda$ is one-one. Again, let $\mathscr E$ be any $e_B$-ultrafilter. By Corollary \ref{cor0.24} there is a unique $Z_B$-ultrafilter $\mathscr A$ containing $\mathscr E$. Now, $\mathscr{E} \subseteq \mathscr{A} \implies E_B(E^{-1}_B(\mathscr{E})) \subseteq E_B(E^{-1}_B(\mathscr{A})) \implies \mathscr{E} \subseteq E_B(E^{-1}_B(\mathscr{A})) \subseteq \mathscr{A}$. Therefore by using Thorem \ref{thm0.23}, we get $\mathscr E= E_B\big( E_B^{-1} (\mathscr A)\big) $ and this implies $\lambda$ is onto with $\lambda (\mathscr A)= \mathscr E.$  
\end{proof}
\noindent In \cite{AA2} we have shown that there is a bijection between the collection of all maximal ideals in $B_1(X)$ and the collection of all $Z_B$-ultrafilters on $X$. Using Corollary \ref{cor0.22} and Corollary \ref{cor0.25} we therefore obtain the following:
\begin{theorem}
	For any normal ( $T_4$ ) topological space $X$, $\MMM\big(B_1(X) \big)$ and $\MMM\big(B_1^*(X) \big)$ have the same cardinality, where $\MMM\big(B_1(X) \big)$, $\MMM\big(B_1^*(X) \big)$ are the collections of all maximal ideals in $B_1(X)$ and $B_1^*(X)$ respectively.
\end{theorem}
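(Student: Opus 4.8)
The plan is to produce a bijection between $\MMM(B_1(X))$ and $\MMM(B_1^*(X))$ by composing three correspondences, each of which is already available in this paper (or in \cite{AA2}). The chain I have in mind is
\[\MMM(B_1(X)) \;\longleftrightarrow\; \Omega_B(X) \;\longleftrightarrow\; \{e_B\text{-ultrafilters on } X\} \;\longleftrightarrow\; \MMM(B_1^*(X)),\]
where $\Omega_B(X)$ denotes the collection of all $Z_B$-ultrafilters on $X$. Each arrow is a known bijection, so the composite will be the desired bijection.

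The first link comes from \cite{AA2}, recalled in the paragraph preceding the theorem: the restriction map $Z_B : \MMM(B_1(X)) \to \Omega_B(X)$ is a bijection. The middle link is furnished by Corollary~\ref{cor0.25}, which supplies an explicit one-to-one correspondence $\lambda(\mathscr A) = E_B\big(E_B^{-1}(\mathscr A)\big)$ from the collection of all $Z_B$-ultrafilters onto the collection of all $e_B$-ultrafilters. The final link is Corollary~\ref{cor0.22}, according to which $M^* \mapsto E_B(M^*)$ is a bijection of $\MMM(B_1^*(X))$ onto the set of all $e_B$-ultrafilters. First I would check that the domains and codomains of these three maps match up, so that the composite is a well-defined map $\MMM(B_1(X)) \to \MMM(B_1^*(X))$; being a composition of bijections, it is itself a bijection, and the equality of cardinalities is then immediate.

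Since every one of the three correspondences has already been established in full, I do not anticipate any genuine obstacle in this concluding step: all of the substance resides in the earlier corollaries and the cited result of \cite{AA2}, and the present argument reduces to chaining them together. The only point requiring a moment's care is bookkeeping, namely confirming that the three maps are composable as bijections between the stated sets; this is built into the definitions of $E_B$, $E_B^{-1}$, and $\lambda$ and needs no new idea.
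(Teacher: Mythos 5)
Your proposal is correct and matches the paper's own route: the paper's lead-in sentence explicitly derives the theorem from the bijection $Z_B:\MMM(B_1(X))\to\Omega_B(X)$ of \cite{AA2} together with Corollary~\ref{cor0.22} and Corollary~\ref{cor0.25}, and the explicit map $\psi(M)=E_B^{-1}\big(Z_B[M]\big)$ written out in the paper's proof is exactly your composite (since $E_B^{-1}(\mathscr A)=E_B^{-1}\big(E_B\big(E_B^{-1}(\mathscr A)\big)\big)$ for a $Z_B$-ultrafilter $\mathscr A$). The only difference is that the paper re-verifies injectivity and surjectivity of $\psi$ directly rather than quoting the corollaries, which is a matter of presentation, not substance.
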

\begin{proof}
	Let us define a map $\psi : \MMM\big(B_1(X) \big) \rightarrow \MMM\big(B_1^*(X) \big)$ by $\psi(M)=E_B^{-1}\big(Z_B[M]\big)$. From \cite{AA2} we know  that $Z_B[M]$ is a $Z_B$-ultrafilter on $X$ and by Theorem \ref{thm0.15} $E_B^{-1}\big(Z_B[M]\big)$ is a maximal ideal in $B_1^*(X)$. We claim that $\psi$ is one-one and onto.\\
Suppose $\psi(M)=\psi (N)$, for some $M,N \in \MMM\big(B_1(X) \big)$. Clearly, $E_B\big(E_B^{-1}\big(Z_B[M]\big)\big) = E_B\big(E_B^{-1}\big(Z_B[N]\big)\big)$ contained in $Z_B[M]$ as well as $Z_B[N]$. Since each member of $Z_B[M]$ intersects every member of $E_B\big(E_B^{-1}\big(Z_B[N]\big)\big)$, so by Theorem \ref{thm0.14} every member of $Z_B[M]$ belongs to $Z_B[N]$. Therefore, $Z_B[M] \subseteq Z_B[N]$. Similarly, $Z_B[N] \subseteq Z_B[M]$. Hence, $Z_B[M] = Z_B[N]$ and by Theorem 2.11 of \cite{AA2} we have $M=N$, i.e., $\psi$ is one-one.\\
To prove that $\psi$ is onto we let $M^* \in \MMM\big(B_1^*(X) \big)$. By Theorem \ref{thm0.19} $E_B(M^*)$ is an $e_B$-ultrafilter. Let $\mathscr A$ be the unique $Z_B$-ultrafilter containing $E_B(M^*)$. $Z_B^{-1}[\mathscr A]$ belongs to $\MMM\big(B_1(X)\big)$ \cite{AA2}. We know $E_B\big( E_B^{-1}(\mathscr A) \big)$ is the unique $e_B$-ultrafilter contained in $\mathscr A$. Therefore, $E_B\big( E_B^{-1}(\mathscr A) \big)= E_B(M^*)$ and $E_B^{-1}(\mathscr A)= M^*$ (since both $E_B^{-1}(\mathscr A)$ and $M^*$ are maximal ideals in $B_1^*(X)$ and maximal ideals are $e_B$-ideals). Consider $M= Z_B^{-1}[\mathscr A]$, clearly $M \in \MMM\big(B_1(X) \big)$. Now $\psi (M)=E_B^{-1}\big( Z_B[M]\big)=E_B^{-1}\big( Z_B[Z_B^{-1}[\mathscr A]]\big)=E_B^{-1}(\mathscr A)=M^*$ (\cite{AA2}), i.e., $Z_B^{-1}[\mathscr A]$ is the preimage of $M^*$. Hence, $\psi$ is onto and it establishes a one to one correspondence between $\MMM\big(B_1(X) \big)$ and $\MMM\big(B_1^*(X) \big)$.
\end{proof}
\noindent The following property characterizes maximal ideals of $B_1^*(X)$. 
\begin{theorem}
	An ideal $M^*$ in $B_1^*(X)$ is a maximal ideal if and only if whenever $f \in B_1^*(X)$ and every $E_B^\epsilon(f)$ intersects every member of $E_B(M^*)$, then $f \in M^*$.
\end{theorem}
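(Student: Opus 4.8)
The plan is to route both directions through a single $Z_B$-ultrafilter attached to $M^*$ and to exploit that membership in a $Z_B$-ultrafilter is exactly detected by the ``meeting'' condition of Theorem~\ref{thm0.14}. Throughout I take $M^*$ to be a proper ideal (maximal ideals being proper by convention), so that $E_B(M^*)$ is a genuine $Z_B$-filter by Theorem~\ref{thm1}; for the improper ideal $B_1^*(X)$ one has $\emptyset \in E_B(M^*)$, the hypothesis of the stated condition is vacuous, and no separate argument is needed. The common object will be a $Z_B$-ultrafilter $\mathscr A$ extending the $Z_B$-filter $E_B(M^*)$, and in the maximal case the identity $M^* = E_B^{-1}(\mathscr A)$.

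For the forward implication I would assume $M^*$ maximal and fix a $Z_B$-ultrafilter $\mathscr A \supseteq E_B(M^*)$. Since $M^* \subseteq E_B^{-1}\big(E_B(M^*)\big)$ by Theorem~\ref{fact1} and $E_B^{-1}$ preserves inclusion, $E_B(M^*) \subseteq \mathscr A$ yields $M^* \subseteq E_B^{-1}(\mathscr A)$; as $E_B^{-1}(\mathscr A)$ is a (proper) maximal ideal by Theorem~\ref{thm0.15}, maximality forces $M^* = E_B^{-1}(\mathscr A)$, whence $E_B(M^*) = E_B\big(E_B^{-1}(\mathscr A)\big)$. Now if $f \in B_1^*(X)$ is such that every $E_B^\epsilon(f)$ meets every member of $E_B(M^*) = E_B\big(E_B^{-1}(\mathscr A)\big)$, then Theorem~\ref{thm0.14} gives $E_B^\epsilon(f) \in \mathscr A$ for every $\epsilon > 0$, i.e. $f \in E_B^{-1}(\mathscr A) = M^*$, which is the desired conclusion.

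For the converse I would assume $M^*$ satisfies the stated property, extend $E_B(M^*)$ to a $Z_B$-ultrafilter $\mathscr A$, and prove $M^* = E_B^{-1}(\mathscr A)$, which is maximal by Theorem~\ref{thm0.15}. The inclusion $M^* \subseteq E_B^{-1}(\mathscr A)$ is immediate, since $f \in M^*$ forces $E_B^\epsilon(f) \in E_B(M^*) \subseteq \mathscr A$ for all $\epsilon > 0$. For the reverse inclusion take $f \in E_B^{-1}(\mathscr A)$, so that $E_B^\epsilon(f) \in \mathscr A$ for every $\epsilon > 0$; for any member $W$ of $E_B(M^*) \subseteq \mathscr A$ the intersection $E_B^\epsilon(f) \cap W$ lies in $\mathscr A$ and is therefore nonempty, so every $E_B^\epsilon(f)$ meets every member of $E_B(M^*)$. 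The hypothesis on $M^*$ then forces $f \in M^*$, giving $E_B^{-1}(\mathscr A) \subseteq M^*$ and hence equality.

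The proof is not computational; its content is bookkeeping about which $Z_B$-ultrafilter to use and why $M^* = E_B^{-1}(\mathscr A)$. I expect the main obstacle to be precisely the reverse inclusion in the converse: one must convert the abstract hypothesis (``every $E_B^\epsilon(f)$ meets every member of $E_B(M^*)$'') into genuine membership $f \in M^*$, and the right mechanism is not Theorem~\ref{thm0.14} (which only detects membership in $\mathscr A$) but the finite intersection property of $\mathscr A$ applied to $E_B^\epsilon(f)$ against the members of $E_B(M^*)$, followed by the defining property of $M^*$. A secondary point to get right is restricting the statement to proper ideals so that the equivalence genuinely holds.
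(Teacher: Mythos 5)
Your proof is correct, but it reaches the conclusion by a genuinely different route from the paper, most visibly in the forward direction. The paper argues the forward implication directly and algebraically: if $f \notin M^*$ with $M^*$ maximal, then $1 = h + fg$ for some $h \in M^*$, $g \in B_1^*(X)$, and an estimate on the sets $E_B^{\epsilon}$ produces $E_B^{\epsilon/2}(h) \cap E_B^{\epsilon/(2u)}(f) = \emptyset$, contradicting the meeting hypothesis; this half uses nothing but the ideal identity and elementary properties of $E_B^\epsilon$, in particular no appeal to normality. You instead route the forward direction through the ultrafilter machinery, identifying $M^* = E_B^{-1}(\mathscr A)$ for a $Z_B$-ultrafilter $\mathscr A \supseteq E_B(M^*)$ via Theorem~\ref{thm0.15} and then invoking Theorem~\ref{thm0.14} to detect $E_B^\epsilon(f) \in \mathscr A$; this is cleaner bookkeeping and reuses established results, but it leans on the normality of $X$ (through the complete-separation argument inside Theorem~\ref{thm0.14}), which is harmless here only because the section assumes normality throughout. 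For the converse both arguments share the same mechanism --- feed elements of a larger maximal object into the stated property --- though the paper works with a maximal ideal extension $M^{**} \supseteq M^*$ directly, while you work with $E_B^{-1}(\mathscr A)$ and the finite intersection property of $\mathscr A$; these are equivalent in substance. Your remark that the statement must be read for proper ideals is well taken (the improper ideal satisfies the condition vacuously since $\emptyset \in E_B(B_1^*(X))$), and the paper's own converse tacitly makes the same assumption when it extends $M^*$ to a maximal ideal.
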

\begin{proof}
	Suppose $f \in B_1^*(X)$ and every $E_B^\epsilon(f)$ meets every member of $E_B(M^*)$, where $M^*$ is a maximal ideal in $B_1^*(X)$. We claim $f \in M^*.$ If not, then the ideal $<M^*,f>$ generated by $M^*$ and $f$ must be equal to $B_1^*(X)$. Hence $1=h+fg$, where $g\in B_1^*(X) $ and $h \in M^*$. Let $u$ be an upper bound of $g$ and $\epsilon$ ($0<\epsilon <\frac{1}{2}$) be any pre assigned positive number.\\
	Then we have $\emptyset=E_B^\epsilon(1)=E_B^\epsilon(h+fg) \supseteq E_B^{\frac{\epsilon}{2}}(h) \bigcap E_B^{\frac{\epsilon}{2}}(fg) \supseteq E_B^{\frac{\epsilon}{2}}(h) \bigcap E_B^{\frac{\epsilon}{2u}}(f) $. This is a contradiction to our hypothesis as $E_B^{\frac{\epsilon}{2}}(h) \bigcap E_B^{\frac{\epsilon}{2u}}(f)=\emptyset$. Hence, $f \in M^*$.\\
	Conversely, suppose $M^*$ is any ideal in $B_1^*(X)$ with the given property. Let $M^{**}$ be a maximal ideal containing $M^*$ in $B_1^*(X)$ and $f \in M^{**}$. As $E_B(M^{**})$ is an $e_B$-filter, $E_B^\epsilon(f)$ meets every member of $E_B(M^{**})$, for any $\epsilon > 0$. Since $E_B(M^*) \subseteq E_B(M^{**})$, $E_B^\epsilon(f)$ meets every member of $E_B(M^{*})$ also. By our hypothesis $f \in M^*$. Therefore, $M^*=M^{**}$.
\end{proof}
\noindent One may easily observe the following remarks.
\begin{remark}
Any $e_B$-ideal of $B_1^*(X)$ is closed in $B_1^*(X)$ with uniform norm topology.	
\end{remark}
\begin{proof}
Let $I$ be any $e_B$-ideal in $B_1^*(X)$. By Theorem \ref{closure_ideal}	$\overline{I}$ (Closure of $I$ in the uniform norm topology) is also a proper ideal in $B_1^*(X)$. Clearly, $I \subseteq \overline{I}$. Let us assume $g \in \overline{I}$ and $\epsilon >0$ be any pre assigned positive real number. Therefore, there must exist an $f \in B(g, \frac{\epsilon}{2}) \bigcap I$. Now for all $x \in E^{\frac{\epsilon}{2}} _{B}(f)$, we have $|g(x)| \leq |g(x)-f(x)| +|f(x)| \leq \epsilon$. So, $E^{\frac{\epsilon}{2}} _{B}(f) \subseteq E^{\epsilon}_{B}(g)$. The $Z_B$-filter $E_B(I)$ contains $E^{\frac{\epsilon}{2}} _{B}(f)$, hence it contains $E^{\epsilon}_{B}(g)$, for all $\epsilon >0$. Therefore, $g \in E_B^{-1}\big(E_B(I) \big)=I$ (as $I$ is an $e_B$-ideal) and $\overline{I} \subseteq I$. This proves that, $I = \overline{I}$, i.e., $I$ is closed.
\end{proof}

\begin{remark}
Any $e_B$-ideal of $B_1^*(X)$ is closed in $B_1^*(X)$ with respect to the relative $m_B$-topology. Since $B_1^*(X)$ is closed in $B_1(X)$ with respect to $m_B$-topology, it then follows that every $e_B$-ideal of $B_1^*(X)$ is a closed set in $B_1(X)$ with $m_B$-topology.
\end{remark}

\end{document}